\newcommand*\owedge{\mathpalette\@owedge\relax}
\newcommand*\@owedge[1]{%
  \mathbin{%
    \ooalign{%
      $#1\m@th\bigcirc$\cr
      \hidewidth$#1\m@th\wedge$\hidewidth\cr
    }%
  }%
}
 \newcommand{\C}{\ensuremath{\mathbb{C}}}
 \newcommand{\R}{\ensuremath{\mathbb{R}}}
 \newcommand{\CP}{\ensuremath{\mathbb{CP}}}
 \newcommand{\ba}{\begin{align*}}
 \newcommand{\ea}{\end{align*}}
 \newcommand{\na}{\nabla}
\newcommand{\Rm}{\text{Rm}}
\newcommand{\lc}{\left(}
\newcommand{\del}{\partial}
\newcommand{\rc}{\right)}
\newcommand{\ep}{\epsilon}
\newcommand{\tl}{\left(\partial_t-\Delta \right)}
\newcommand{\ka}{K\"ahler\,}
 \def\ExtendSymbol#1#2#3#4#5{\ext@arrow 0099{\arrowfill@#1#2#3}{#4}{#5}}
 \def\ExtendSymbol#1#2#3#4#5{\ext@arrow 0099{\arrowfill@#1#2#3}{#4}{#5}}
\def\XXint#1#2#3{{\setbox0=\hbox{$#1{#2#3}{\int}$ }
\vcenter{\hbox{$#2#3$ }}\kern-.55\wd0}}
\numberwithin{equation}{section}
\newtheorem{thm}{Theorem}[section]
\newtheorem{cor}[thm]{Corollary}
\newtheorem{prop}[thm]{Proposition}
\newtheorem{lem}[thm]{Lemma}
\newtheorem{conj}[thm]{Conjecture}
\newtheorem{rem}[thm]{Remark}
\newtheorem{defn}[thm]{Definition}
\title{Ancient solutions to the Ricci flow with isotropic curvature conditions}
\author{Jae Ho Cho, Yu Li\footnote{Partially supported by research fund from SUNY Stony Brook.}}
\date{\today}
\begin{document}
\maketitle

\begin{abstract}
We show that every $n$-dimensional, $\kappa$-noncollapsed, noncompact, complete ancient solution to the Ricci flow with uniformly PIC for $n=4$ or $n\ge12$ has weakly PIC$_2$ and bounded curvature. Combining this with the results in \cite{BN}, we prove that any such solution is isometric to either a family of shrinking cylinders (or a quotient thereof) or the Bryant soliton.  Also, we classify all complex 2-dimensional, $\kappa$-noncollapsed, complete ancient solutions to the \ka Ricci flow with weakly PIC.
\end{abstract}

\tableofcontents

\section{Introduction}
One of the most important questions in the study of geometric flows is to understand the formation of singularity, which is likely to develop in the flow. In Ricci flow, the singularity is investigated by considering the ancient solutions introduced by Hamilton \cite{H3}. If one starts the Ricci flow from a compact manifold, Perelman has proved that any ancient solution, which is formed by the blow-up of the high curvature region, must be $\kappa$-noncollapsed by using his celebrated entropy formula \cite{P}. Therefore, it is a central issue to understand $\kappa$-noncollapsed ancient solutions to the Ricci flow. In dimension $3$, Perelman proved an important structure theorem for all ancient solutions with positive curvature, by stating that it resembles the Bryant soliton and consists of a neck region and a cap region. With this characterization, Perelman proved a canonical neighborhood theorem for high curvature part and managed to continue the Ricci flow through the singular time, which eventually leads to the resolution of Poincar\'e and geometrization conjectures for $3$-manifolds, see \cite{P} \cite{P1} \cite{P2}.

In dimension $2$, all $\kappa$-noncollapsed ancient solutions to the Ricci flow are $\R^2$ and shrinking spheres or their quotients, see \cite{P}. In fact, all ancient solutions to the Ricci flow on surface, including the collapsing case, are completely classified, see \cite{DHS12} \cite{DS06} \cite{Chu07}. In dimension $3$, all $\kappa$-noncollapsed ancient solutions on noncompact manifolds are shrinking cylinders (or their quotients) and the Bryant soliton, proved in Brendle's breakthrough \cite{B0}. For compact case, it was proved by Brendle-Daskalopoulos-Sesum \cite{BDS20} that any compact, $\kappa$-noncollapsed ancient solution is a family of shrinking spheres (or their quotients) or the Perelman's solution \cite{P2}.

In general, it is highly difficult to investigate ancient solutions without any curvature assumption. In dimension $3$, it follows from the well-known Hamilton-Ivey pinching estimate that any ancient solution must have nonnegative sectional curvature, which plays an essential role in the classification of the solution. In higher dimensional case, there is no natural curvature pinching, except that the scalar curvature is nonnegative proved by Chen \cite{C}. Therefore, it is natural to impose various positivity conditions for the curvature, which should be preserved by the Ricci flow. 

One of the natural conditions is \emph{positive isotropic curvature} (PIC for short). The PIC condition, introduced by Micallef-Moore \cite{MM}, is preserved under the Ricci flow, proved by Hamilton \cite{H1} for the 4-dimensional case and Nguyen \cite{NG} and Brendle-Schoen \cite{BS} independently in general dimension. Two other closely related curvature conditions, PIC$_1$ and PIC$_2$,  were introduced by Brendle-Schoen \cite{BS} and they play important roles in the proof of the Differentiable Sphere Theorem, see the monograph \cite{RFST}. For the precise definitions of these curvature conditions, see Definition \ref{defn:cur}.

In dimension 4, Hamilton \cite{H1} (see also \cite{CZ} \cite{CTZ12}) classified all differential structures of compact manifolds with PIC, provided that there is no essential incompressible space-form. It was proved in \cite{H1} any ancient solution developed from the blow-up process must have a nonnegative curvature operator and uniformly PIC. This curvature improvement can be regarded as a four-dimensional generalization of the Hamilton-Ivey pinching. In the higher dimension when $n \ge 12$, similar classification of all compact manifolds with PIC was obtained by Brendle \cite{B1}. It was proved by Brendle \cite{B1} that any ancient solution coming from a compact manifold with PIC must have weakly PIC$_2$ and uniformly PIC. The method used by Brendle is to construct ingeniously a family of curvature cones that pinches toward the desired curvature condition.

Therefore, it is important to investigate $\kappa$-noncollapsed ancient solutions with weakly PIC$_2$ and uniformly PIC. On the one hand, the weakly PIC$_2$ condition guarantees that many important properties, including compactness, non-Euclidean volume growth, Hamilton's differential Harnack inequality, etc., are available. On the other hand, the uniformly PIC condition has completely determined the geometry at infinity. Recently, Brendle and Naff have proved the following classification result, which is a generalization of Brendle's seminal work on 3-dimensional ancient solution \cite{B0}.

\begin{thm}[Corollary $1.6$ of Brendle-Naff~\cite{BN}] \label{thm:kappa00}
Any complete, noncompact, $\kappa$-noncollapsed ancient solution to the Ricci flow with weakly \emph{PIC}$_2$, uniformly \emph{PIC} and bounded curvature is isometric to either a family of shrinking cylinders (or a quotient thereof ) or to the Bryant soliton.
\end{thm}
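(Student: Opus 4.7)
The plan is to follow the strategy pioneered by Brendle for three-dimensional ancient solutions and adapted to higher dimensions, which reduces the classification problem to showing rotational symmetry on the cap region of a neck-like decomposition. First I would analyze the asymptotic shrinking soliton. Using Perelman's reduced volume monotonicity (valid here because weakly PIC$_2$ implies nonnegative Ricci curvature and the solution is $\kappa$-noncollapsed with bounded curvature), I rescale the solution by factors $\lambda_k \to 0$ about a fixed basepoint as $t \to -\infty$, extract a smooth Cheeger-Hamilton limit, and obtain a non-flat shrinking gradient Ricci soliton that still satisfies weakly PIC$_2$ and uniformly PIC. By the rigidity of shrinking solitons under these curvature conditions (a classification that itself follows from strong maximum principle arguments on the PIC$_2$ cone together with Hamilton's identity for solitons), the asymptotic shrinker is forced to be either a round shrinking cylinder $S^{n-1} \times \R$ (or a quotient) or itself reducible; the latter must be ruled out in the noncompact irreducible case.

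Next I would establish a canonical neighborhood theorem: uniformly PIC together with the shrinker analysis implies that sufficiently far from a fixed compact set, every point lies in an $\epsilon$-neck modeled on a round cylinder. This requires a contradiction/compactness argument: if some sequence of points escaping to infinity fails to lie in an $\epsilon$-neck, rescale at those points to obtain another $\kappa$-noncollapsed ancient solution satisfying the same hypotheses whose asymptotic shrinker is again a cylinder, and show via the neck-stability theorem that the sequence was in fact necks after all. The upshot is a decomposition of each time slice into a compact cap and a union of necks with cylindrical asymptotic geometry; in particular, if the manifold is a global cylinder one is immediately done, so I assume a genuine cap exists.

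The heart of the proof is establishing rotational symmetry of the cap. Following Brendle's Lie-algebra method, I would construct approximate Killing vector fields on each $\epsilon$-neck (there are $\binom{n}{2} - (n-1) = \binom{n-1}{2}$ of them modeled on the isometries of $S^{n-1}$ fixing a cylindrical axis) and extend them inward to the cap as solutions of a suitable elliptic/parabolic Lichnerowicz-type system $\Delta V + 2\Rm(V) = 0$ (or its parabolic analogue coupled to the Ricci flow). The extension is obtained by a Fredholm/weighted-norm scheme on the whole ancient flow, and the crucial closing step is to show the approximate Killing defect decays at infinity, which reduces to a delicate maximum principle for a coupled system of $V$ and $\mathcal{L}_V g$ with carefully chosen weights that exploit uniformly PIC. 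This extension-and-decay argument is where I expect the main technical obstacle: controlling the defect globally on a noncompact ancient solution, rather than on a steady soliton, requires propagating decay estimates both in space and backward in time, and is the step where the hypothesis ``bounded curvature'' and noncompactness are used most essentially.

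Once $\binom{n-1}{2}$ Killing vector fields are produced on each time slice, standard Lie-group arguments give an isometric $SO(n-1)$-action. The final step is to classify rotationally symmetric, $\kappa$-noncollapsed, complete, noncompact ancient solutions with weakly PIC$_2$: the warped-product structure reduces the Ricci flow to a scalar PDE on a half-line, whose ancient $\kappa$-noncollapsed solutions are known to be precisely the Bryant soliton in the nontrivial case (the cylinder case having been separated off earlier). Combining these steps yields the two alternatives in the statement.
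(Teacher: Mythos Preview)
The paper does not prove this statement at all: Theorem~\ref{thm:kappa00} is quoted verbatim as Corollary~1.6 of Brendle--Naff~\cite{BN} and used as a black box. The only description the paper offers is the two-sentence summary in the introduction---one proves rotational symmetry via the Neck Improvement Theorem \cite[Theorem~4.8]{BN} (cf.\ \cite[Theorem~8.5]{B0}), and then invokes the classification of rotationally symmetric $\kappa$-noncollapsed ancient solutions \cite[Theorem~1.1]{B0}, \cite[Theorem~1.5]{LZ18}.

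Your outline matches that strategy at the coarse level (asymptotic shrinker, neck structure, Killing fields, rotationally symmetric classification), but the mechanism you propose for step three is not the one Brendle--Naff actually use, and the difference is not cosmetic. You describe extending approximate Killing fields from the necks into the cap by a ``Fredholm/weighted-norm scheme'' for an elliptic Lichnerowicz-type operator. The argument in \cite{BN} is instead parabolic and iterative: the Neck Improvement Theorem says that if a large parabolic neighborhood is $\epsilon$-symmetric, then a smaller parabolic neighborhood at a slightly later time is $\epsilon/2$-symmetric. One seeds this with the cylindrical asymptotic shrinker and iterates backward-to-forward along the ancient flow; the cap inherits exact symmetry because every point in it lies in the forward evolution of arbitrarily good necks. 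There is no elliptic extension problem and no Fredholm theory; the ``decay of the Killing defect'' is produced by the iteration itself, not by a weighted maximum principle on a fixed time slice. Your proposal as written would not close without this ingredient.

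One minor correction: the number of rotational Killing fields needed is $\dim\mathfrak{so}(n)=\binom{n}{2}$, not $\binom{n-1}{2}$. You need the full rotation group of the $S^{n-1}$ fiber to conclude the metric is a warped product; the stabilizer of a point gives only cohomogeneity two on the cylinder.
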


The key idea of Theorem \ref{thm:kappa00}, like the 3-dimensional case, is to prove that such a solution must be rotationally symmetric, by using a deep neck improvement theorem \cite[Theorem $4.8$]{BN}, \cite[Theorem $8.5$]{B0}. By analyzing the rotationally symmetric ancient solutions with the same property, the complete classification is obtained \cite[Theorem $1.1$]{B0},\cite[Theorem $1.5$]{LZ18}.

In this paper, we slightly improve Theorem \ref{thm:kappa00} by showing that for the ancient solution, if the dimension $n=4$ or $n \ge 12$, the uniformly PIC condition implies the weakly PIC$_2$. Moreover, with the $\kappa$-noncollapsing condition, we prove that all these ancient solutions must have bounded curvature. 

The first main result of the paper is

\begin{thm} \label{thm:main1}
Let $(M^n,g(t))_{t \in (-\infty,0]}$ be a $\kappa$-noncollapsed, noncompact, complete ancient solution to the Ricci flow with uniformly \emph{PIC} for $n=4$ or $n \ge 12$. Then it is isometric to either a family of shrinking cylinders (or a quotient thereof ) or the Bryant soliton.
\end{thm}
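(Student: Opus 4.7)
The strategy is to reduce to Theorem~\ref{thm:kappa00} by establishing two additional properties along $(M^n,g(t))$: (a) weakly PIC$_2$ on every time slice and (b) uniformly bounded curvature on every time slice. Since uniformly PIC and $\kappa$-noncollapsing are already hypotheses, Theorem~\ref{thm:kappa00} then delivers the classification.

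For (a), the two dimension regimes require different inputs. When $n=4$, I would invoke Hamilton's pinching estimate from \cite{H1}: a four-dimensional ancient solution with uniformly PIC has nonnegative curvature operator, which in dimension four is equivalent to weakly PIC$_2$. When $n\ge 12$, I would adapt Brendle's construction from \cite{B1} of a one-parameter family of Ricci-flow invariant closed convex curvature cones $\{\mathcal{C}_b\}_{b>0}$, each containing the PIC$_2$ cone, such that any curvature operator with uniform PIC margin $\theta$ lies in $\mathcal{C}_b$ whenever $\theta$ is sufficiently large relative to $b$. Because our solution is ancient with the same uniform PIC margin at every $t \in (-\infty,0]$, propagating forward ODE invariance from arbitrarily early times forces $\Rm(p,t) \in \mathcal{C}_b$ for every $b>0$; taking $b \downarrow 0$ and using $\bigcap_{b>0}\mathcal{C}_b = \{\text{weakly PIC}_2\}$ gives the conclusion.

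For (b), I would argue by contradiction. Assume a sequence $(p_k,t_k)$ with $Q_k := R(p_k,t_k) \to \infty$, and use Perelman-style point-picking to upgrade $(p_k,t_k)$ to an approximate curvature maximum in a parabolic neighborhood of expanding size. The rescalings $g_k(t) := Q_k\, g(t_k + Q_k^{-1} t)$ have uniformly bounded curvature on larger and larger parabolic balls. With weakly PIC$_2$ from (a) in hand, Hamilton's compactness theorem combined with $\kappa$-noncollapsing extracts a subsequential smooth pointed limit $(M_\infty, g_\infty(t), p_\infty)$ that is a complete, noncompact, $\kappa$-noncollapsed ancient solution satisfying weakly PIC$_2$, uniformly PIC, and bounded curvature, with $R_{g_\infty}(p_\infty,0)=1$. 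Theorem~\ref{thm:kappa00} then forces $M_\infty$ to be a shrinking cylinder (or a quotient) or a Bryant soliton. Either model has rigid asymptotic geometry which, pulled back to the original solution, equips $(p_k,t_k)$ with a canonical neighborhood at scale $Q_k^{-1/2}$. Iterating the blow-up inside such neighborhoods at ever higher curvature scales yields a cascade of nested canonical neighborhoods incompatible with the uniform PIC margin and $\kappa$-noncollapsing simultaneously, and hence a contradiction.

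The principal obstacle is (b): guaranteeing that the blow-up limit is both complete and of bounded curvature. The point-picking handles the parabolic neighborhood at $(p_k,t_k)$ going slightly backward in time, but extending the bound throughout $(-\infty,0]$ requires Hamilton's differential Harnack inequality (available under weakly PIC$_2$) to propagate bounds backward, together with the uniform PIC margin to control spatial variation of curvature at the rescaled scale. A secondary difficulty lies in (a) for $n\ge 12$: Brendle's cone argument in \cite{B1} is stated for Ricci flows out of compact initial data, and one must verify that the ancient structure with a uniform PIC margin alone, without compactness of $M$, is enough to drive $\Rm$ into $\bigcap_b \mathcal{C}_b$.
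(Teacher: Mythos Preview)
Your high-level strategy---reduce to Theorem~\ref{thm:kappa00} by first proving weakly PIC$_2$ and then bounded curvature---matches the paper exactly. But two of your steps have genuine gaps.

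\textbf{Part (a).} Neither Hamilton's result in \cite{H1} (for $n=4$) nor Brendle's cone preservation in \cite{B1} (for $n\ge 12$) applies as stated, because both rely on a maximum principle that needs bounded curvature on each time slice, which you have not assumed. You flag this as a ``secondary difficulty,'' but it is in fact the central technical contribution of the paper. The resolution is a localization argument in the spirit of Chen \cite{C}: one shows that any continuous $f$ on a complete ancient solution satisfying $(\partial_t-\Delta)f\ge \delta f^2$ in the barrier sense must be nonnegative, with no curvature bound needed (Corollary~\ref{C201}). For $n=4$ this is applied directly to carefully chosen combinations of the $a_i,b_i,c_i$ (Lemmas~\ref{lem:400}--\ref{lem:403}). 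For $n\ge 12$ one first proves a uniform lower bound $\lambda_1+\lambda_2\ge \delta R$ on the Ricci eigenvalues (Proposition~\ref{prop:2ric}), then runs a continuity argument along Brendle's family $\hat{\mathcal C}(b)$: at each $b$ one defines $\lambda(x,t)$ so that $\Rm+(\lambda-\delta R)I\in\partial\hat{\mathcal C}(b)$, derives $(\partial_t-\Delta)\lambda+\lambda^2\le 0$ where $\lambda\ge 0$, and invokes Corollary~\ref{C201}. Also, your description of the cones is off: Brendle's family pinches toward weakly PIC$_1$, not PIC$_2$; the passage to PIC$_2$ is a separate step via \cite[Lemma 4.2]{BCW} or \cite[Proposition 6.2]{LN}.

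\textbf{Part (b).} Your blow-up outline is close in spirit to the paper's Theorem~\ref{thm:cano} and Proposition~\ref{prop:bdd}, but the final contradiction---``a cascade of nested canonical neighborhoods incompatible with the uniform PIC margin and $\kappa$-noncollapsing''---is not a mechanism. The actual contradiction is geometric: once the solution has strictly PIC$_2$ (hence positive sectional curvature), the canonical neighborhood theorem furnishes necks $S^{n-1}\times\R$ at arbitrarily small scales near points of diverging curvature, and \cite[Proposition~2.2]{CZ} says a manifold with positive sectional curvature cannot contain such a sequence. The boundary case (curvature on $\partial(\text{PIC}_2)$ at some point) is handled separately by a splitting $\tilde M\cong N\times\R$ via \cite[Proposition~6.6]{B1}.
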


The proof of Theorem \ref{thm:main1} consists of two parts. First, we prove that uniformly PIC implies weakly PIC$_2$. For the curvature improvement, the usual strategy is to apply the maximum principle for the curvature operator by constructing some pinching sets. However in our case, since the curvature is not assumed to be bounded on each time slice, the maximum principle is not automatically available. Here, by following the idea of Chen's proof that any scalar curvature is nonnegative for ancient solutions \cite{C}, we consider parabolic equations in the following form
\begin{align}
\tl f \ge  f^2.
\end{align}
With a localization argument, see Proposition \ref{P201}, one can prove that $f$ must be nonnegative without any curvature restriction. By this observation, we prove, in dimension $4$, that the curvature operator is nonnegative, see Lemma \ref{lem:403}. In the higher dimension, we consider a continuous family of curvature cones, constructed by Brendle \cite{B1}, and show that the curvature operator of our ancient solution lies in all those cones. Since the curvature cones pinch toward weakly PIC$_1$, the ancient solution has weakly PIC$_1$. Therefore, by \cite[Lemma 4.2]{BCW} (see also \cite[Proposition 6.2]{LN}), the ancient solution has weakly PIC$_2$. Moreover, we obtain a general theorem for curvature improvement, see Theorem \ref{thm:cones}. In the proof, the $\kappa$-noncollapsing condition is not needed. We remark that similar curvature improvement for Ricci shrinkers is obtained in \cite{Ka19}.

The second part of the proof is to show that any such ancient solution must have bounded curvature. The proof relies on a version of canonical neighborhood theorem for Ricci flows with weakly PIC$_2$ and uniformly PIC, see Theorem \ref{thm:cano}. We show that any higher curvature part of such a solution is close to a round cylinder or the Bryant soliton. By using the fact that any manifold with positive sectional curvature cannot have smaller and smaller cylinders \cite[Proposition $2.2$]{CZ}, we can derive a contradiction if the curvature is unbounded, see Proposition \ref{prop:bdd}. In the proof, the $\kappa$-noncollapsing condition is essentially used.

Our second main result is the following classification of the ancient solutions to the \ka Ricci flow on \ka surfaces.

\begin{thm}\label{thm:main2}
 Let $(M^2,g(t))_{t \in (-\infty,0]}$ be a $\kappa$-noncollapsed, complete, complex 2-dimensional ancient solution to the K\"{a}hler-Ricci flow with weakly \emph{PIC} and bounded curvature in any compact time interval. Then it is isometric-biholomorphic to one of the spaces $\C^2$, $\CP^2$, $\CP^1 \times \CP^1$, $\C^1 \times \CP^1$ equipped with standard metrics, up to scalings on each factors.
\end{thm}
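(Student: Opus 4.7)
The strategy is to exploit the K\"ahler structure to upgrade the weakly PIC hypothesis to nonnegative holomorphic bisectional curvature, and then apply Mok's strong maximum principle together with the classification of low-dimensional ancient solutions. Because the curvature is assumed bounded on each compact time interval, the standard parabolic maximum principle is directly available, so the localized maximum principle of Proposition \ref{P201} used in the proof of Theorem \ref{thm:main1} is not required here.

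First I would observe that on a complex surface the weakly PIC condition, read off from the complexified curvature operator, is equivalent via the K\"ahler identities to the nonnegativity of the orthogonal holomorphic bisectional curvature $R(X, JX, Y, JY)$ whenever $X \perp Y$ and $X \perp JY$. Combined with standard curvature-cone preservation for the K\"ahler-Ricci flow (Mok, Bando), this upgrades the hypothesis to nonnegative orthogonal bisectional curvature throughout $M \times (-\infty, 0]$; an additional evolution-equation argument specific to complex dimension two then promotes the conclusion to full nonnegative holomorphic bisectional curvature.

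Next, I would apply Mok's strong maximum principle for the bisectional curvature at the time slice $t = 0$: either the universal cover $(\tilde{M}, \tilde{g}(0))$ splits isometrically as a K\"ahler product $\Sigma_1 \times \Sigma_2$ of Riemann surfaces, or the bisectional curvature is strictly positive everywhere. In the product case, each $\Sigma_i$ carries a $\kappa$-noncollapsed complete ancient solution to the two-dimensional Ricci flow with $R \ge 0$; by the surface classification (Perelman, Daskalopoulos-Sesum, Chu) and the exclusion of the cigar soliton by $\kappa$-noncollapsing, each factor is isometric to either flat $\C$ or a shrinking round $\CP^1$, giving $\C^2$, $\C \times \CP^1$, and $\CP^1 \times \CP^1$ on the list, up to scaling on each factor.

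In the irreducible case, where the bisectional curvature is strictly positive everywhere, the compact subcase is settled by Siu-Yau's resolution of the Frankel conjecture, which identifies $M$ biholomorphically with $\CP^2$; ancient-solution rigidity then forces the metric to be Fubini-Study up to scaling. For noncompact $M$, the plan is to construct an asymptotic shrinking soliton via Perelman's reduced-volume argument (using the $\kappa$-noncollapsing), observe that the shrinker inherits nonnegative bisectional curvature, and invoke the classification of complete K\"ahler-Ricci shrinkers in complex dimension two with nonnegative bisectional curvature to conclude that the shrinker must be compact, contradicting the noncompactness of $M$. The main obstacle will be precisely this noncompact irreducible subcase: candidates such as Cao's $U(2)$-invariant steady K\"ahler soliton on $\C^2$, which has strictly positive bisectional curvature, must be excluded, and carrying the asymptotic shrinker analysis through cleanly in the K\"ahler setting --- in particular, verifying that the shrinker inherits the required curvature positivity and matches the hypotheses of the K\"ahler shrinker classification --- is where the bulk of the technical effort should lie.
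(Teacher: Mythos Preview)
Your reduction to nonnegative bisectional curvature and the handling of the reducible and compact cases are sound and parallel the paper's own use of Lemma~\ref{lem:405}, Theorem~\ref{thm:cpt}, and the two-dimensional classification. The genuine gap is in the noncompact irreducible case. Your plan is to take an asymptotic shrinker and argue that it must be compact, thereby contradicting the noncompactness of $M$. But the K\"ahler shrinker classification (Theorem~\ref{thm:shrinker}) does \emph{not} force the shrinker to be compact: in complex dimension~$2$ the nonflat noncompact option $\C\times\CP^1$ is on the list, and this is exactly what arises. Strict positivity of the bisectional curvature on $M$ does not pass to the blow-down limit---the positivity degenerates, the shrinker splits, and no contradiction results. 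So the step ``conclude that the shrinker must be compact'' fails outright, and with it the whole proposed endgame; this is not a technical verification issue but a wrong inference.

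The paper's route through this case is substantially different and more elaborate. One first shows (Lemma~\ref{lem:type2}) that every $\kappa$-solution is Type~I, which already disposes of Cao's steady soliton. The asymptotic shrinker is then identified as $\C\times\CP^1$, and---crucially---a canonical neighborhood statement (Proposition~\ref{prop:beh}) is proved showing that \emph{every} spacetime point at large $\tau$, after rescaling, is modeled on $\C\times\CP^1$. This yields local $S^2$-fibrations everywhere, which are patched (Propositions~\ref{prop:loc}--\ref{prop:glob}) into a global smooth $S^2$-fibration $M\to\mathcal S$ over a noncompact surface. The contradiction is then purely topological: if $M$ were irreducible it would be diffeomorphic to $\R^4$ by the Ni--Tam structure theorem, but the homotopy exact sequence of the fibration forces $\mathcal S\cong\R^2$ and hence $M\cong S^2\times\R^2$, a contradiction. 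In short, the asymptotic shrinker is the \emph{input} to a fibration-and-topology argument, not the source of a direct contradiction; your proposal is missing this entire mechanism.
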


In the \ka setting, a similar problem is to understand all $\kappa$-noncollapsed, ancient solutions to the \ka Ricci flow with some positive curvature conditions. It is well-known that a \ka surface (i.e. Kähler manifold with complex dimension 2) has weakly PIC if and only if it has nonnegative orthogonal bisectional curvature, which is preserved under \ka Ricci flow \cite{GZ}. In general, an ancient solution to the \ka Ricci flow with nonnegative orthogonal bisectional curvature has nonnegative bisectional curvature \cite{LN}. For the $\kappa$-solutions (see Definition \ref{def:kappa1}) to the \ka Ricci flow, there is a parallel theory to Perelman's theory on $\kappa$-solutions to the $3$-dimensional Ricci flows. Many important theorems, including compactness theorem, Hamilton's differential Harnack inequality also hold for the $\kappa$-solutions to the \ka Ricci flow \cite{N}\cite{Ca3}. Notice that all compact $\kappa$-solutions to the \ka Ricci flow are classified \cite{DZ1}. Therefore, the main issue is to prove the noncompact case.

To prove Theorem \ref{thm:main2}, we first prove that any $\kappa$-solutions to the \ka Ricci flow must be of Type I, see Lemma \ref{lem:type2}. From the classification of \ka Ricci shrinkers with nonnegative bisectional curvature \cite{N}, the asymptotic behavior of the ancient solution is modeled on standard $\C \times \CP^1$. Moreover, we obtain a curvature improvement so that the ancient solution has a nonnegative curvature operator, see Theorem \ref{lem:405}. With the nonnegative sectional curvature, we show that at spatial infinity, the geometry is also modeled on $\C \times \CP^1$. Therefore, we can prove a canonical neighborhood theorem, see Proposition \ref{prop:beh}, such that locally one can obtain a $S^2$-fibration. By a standard argument, we can patch all local fibrations to form a global fibration, see Theorem \ref{thm:kappa3}. Now the proof of Theorem \ref{thm:main2} is complete by a topological argument.

This paper is organized as follows. In section 2, we review some basic concepts of the Ricci flow and prove a localization result. In section 3 and 4, we prove the curvature improvement for dimension $n \ge 12$ and $n=4$, respectively. In section 5, we prove a canonical neighborhood theorem and Theorem \ref{thm:main1}. In section 6, we construct a fibration on the \ka Ricci flow and prove Theorem \ref{thm:main2}. In the last section, we propose some further questions.
\\
\\

\textbf{Acknowledgements:} Jae Ho Cho would like to thank his Ph.D. advisor Xiuxiong Chen for all his constant help, encouragement, and guidance. Yu Li would like to thank Professor Xiuxiong Chen and Professor Bing Wang for helpful discussions.

\section{Preliminaries}

First, we recall the following standard definitions in the Ricci flow.

\begin{defn}\label{def:singular} Let $(M^n,g(t))_{t \in (-\infty,T)}$ be a complete ancient solution to the Ricci flow $\partial_t g=-2Ric$.
\begin{enumerate}[label=(\roman*)]
\item $(M,g(t))_{t \in (-\infty,T)}$ has bounded curvature on any compact time interval if for any compact interval $I \subset (-\infty,T)$, we have $\sup_{M \times I}|\emph{Rm}(x,t)|<\infty$. In this case, $T$ is called the singular time if $T<\infty$ and $\lim_{t \to T} \sup_{M\times \{t\}} |\emph{Rm}(x,t)|=\infty$.

\item Given $\kappa>0$, $(M,g(t))_{t \in (-\infty,T)}$ is said to be $\kappa$-noncollapsed (at all scales) if for any ball $B_{g(t)}(x,r)$ satisfying $|\emph{Rm}(y,t)| \le r^{-2}$ for all $ y \in B_{g(t)}(x,r)$, we have $\emph{Vol}\,B_{g(t)}(x,r)\ge \kappa r^n$. 
\end{enumerate}
\end{defn}

Next, we recall the following curvature conditions which are used throughout the paper.

\begin{defn}\emph{(Isotropic curvature conditions) }\label{defn:cur}
\begin{enumerate}[label=(\roman*)]
\item A Riemannian manifold $(M^n,g)\,(n\ge 4)$ is said to have weakly \emph{PIC} (in other words, nonnegative isotropic curvature) if for any $p \in M$ and any orthonormal 4-frame $\{e_1,e_2,e_3,e_4\}$ of $T_pM$, we have
 $$
R_{1313}+R_{1414}+R_{2323}+R_{2424}-2R_{1234}\ge0.
$$
Note that when $n=4$, this condition is equivalent to $a_1+a_2 \ge 0$ and $c_1+c_2 \ge 0$ using the notation in \emph{\cite{H}}. 

\item A Riemannian manifold $(M^n,g)\,(n\ge 5)$ is said to have uniformly \emph{PIC} if there exists a constant $\theta>0$ such that for any $p \in M$ and any orthonormal 4-frame $\{e_1,e_2,e_3,e_4\}$ of $T_pM$ we have
$$
R_{1313}+R_{1414}+R_{2323}+R_{2424}-2R_{1234}\ge 4\theta R>0. \label{E:upic}
$$

\item A 4-dimensional Riemannian manifold $(M^4,g)$ is said to have uniformly \emph{PIC} if there exists a constant $\Lambda\ge1$ such that
$$
0<\max(a_3,b_3,c_3)\le \Lambda \min(a_1+a_2,c_1+c_2).
$$

\item A Riemannian manifold $(M^4,g)$ is said to have weakly \emph{PIC}$_1$ if for any $\lambda \in [-1,1]$, any $p \in M$ and any orthonormal 4-frame $\{e_1,e_2,e_3,e_4\}$ of $T_pM$, we have
$$
R_{1313}+\lambda^2 R_{1414}+ R_{2323}+\lambda^2 R_{2424}-2\lambda R_{1234}\ge 0.
$$

\item A Riemannian manifold $(M^4,g)$ is said to have weakly \emph{PIC}$_2$ if for any $\lambda,\mu \in [-1,1]$, any $p \in M$ and any orthonormal 4-frame $\{e_1,e_2,e_3,e_4\}$ of $T_pM$, we have
$$
R_{1313}+\lambda^2 R_{1414}+\mu^2 R_{2323}+\lambda^2 \mu^2 R_{2424}-2\lambda \mu R_{1234}\ge 0.
$$

\item A Ricci flow solution $(M^n,g(t))_{t \in \mathcal{I}}$ is said to have weakly \emph{PIC} (uniformly \emph{PIC}, weakly \emph{PIC}$_1$, weakly \emph{PIC}$_2$) if $(M,g(t))$ has weakly \emph{PIC} (uniformly \emph{PIC}, weakly \emph{PIC}$_1$, weakly \emph{PIC}$_2$) for all $t \in \mathcal{I}$. For uniformly \emph{PIC}, the constant $\theta$ or $\Lambda$ is required to be uniform for all $t \in \mathcal{I}$.
\end{enumerate}
\end{defn}

For later applications, we need the following localization result similar to \cite[Proposition $2.1$]{C}.

\begin{prop} \label{P201}
For $r>0$ and $A \ge 14(n-1)r^{-2}T+2$, suppose $(M^n, g(t))_{t \in [0,T]}$ is a Ricci flow solution with a continuous function $f : M \times [0,T] \to \R$ satisfying the following properties.
\begin{enumerate}[label=\arabic*)]
    \item $B_{g(t)}(p,Ar)$ is compactly contained in $M$ for any $t \in [0,T]$.
    \item For any $t \in [0,T]$ and $x \in B_{g(t)}(p,r)$, $|\text{Ric}|(x,t)\le (n-1)r^{-2}$.
    \item For any $t \in [0,T]$ and $x \in B_{g(t)}(p,Ar)$, $\tl f(x,t) \ge \delta f^2(x,t)$ in the barrier sense for a constant $\delta>0$.
\end{enumerate}
Then there exists a constant $C=C(n)>0$ such that $f(x,t) \ge \min\left \{-\dfrac{4}{t\delta},-\dfrac{C}{(Ar)^2\delta}\right \}$ for any $t \in [0,T]$ and $x \in B_{g(t)}(p,\frac{3Ar}{4})$.
\end{prop}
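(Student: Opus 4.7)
The plan is to adapt Chen's localization argument \cite{C} by combining Perelman's one-sided distance evolution estimate with a maximum-principle comparison against the explicit supersolution of the ODE $y'=\delta y^{2}$, so as to dispense with any global curvature assumption. First I would construct a cutoff function $\eta(x,t)\in[0,1]$ that equals $1$ on $B_{g(t)}(p,3Ar/4)$ and vanishes outside $B_{g(t)}(p,Ar)$. Since $|\text{Ric}|\le(n-1)r^{-2}$ on $B_{g(t)}(p,r)$ throughout $[0,T]$, Perelman's distance lemma gives $(\partial_t-\Delta)\,d_{g(t)}(x,p)\ge -C(n)/r$ in the barrier sense whenever $d_{g(t)}(x,p)\ge r$. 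Taking $\eta(x,t)=\phi(d_{g(t)}(x,p)/(Ar))$ for a smooth nonincreasing profile $\phi$ with $\phi\equiv 1$ on $[0,3/4]$ and $\phi\equiv 0$ on $[1,\infty)$, and noting that $A\ge 2$ forces $d_{g(t)}(x,p)\ge 3Ar/4\ge r$ on the support of $\phi'$, the Perelman estimate yields cutoff bounds of the form $|\nabla\eta|^{2}\le C/(Ar)^{2}$ and $(\partial_t-\Delta)\eta\le C/(Ar^{2})$. The quantitative hypothesis $A\ge 14(n-1)r^{-2}T+2$ is what is needed to guarantee that $B_{g(t)}(p,r)$ persists with sufficient size throughout $[0,T]$, so the distance function and these estimates remain uniformly available in $t$.

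Next, for a small parameter $\tau>0$, I would introduce the spatially constant barrier $\bar f(t):=-1/(\delta(t+\tau))$, which satisfies $\bar f'=\delta\bar f^{2}$. Set $F:=\eta^{2}(f-\bar f)$, and suppose $F$ attains a negative minimum at some interior point $(x_{0},t_{0})$; then the barrier-sense maximum principle gives $(\partial_t-\Delta)F\le 0$ there. Expanding, using $\nabla F=0$ to rewrite the cross term $-2\nabla\eta^{2}\!\cdot\!\nabla f$ as $8|\nabla\eta|^{2}(f-\bar f)$, and substituting $(\partial_t-\Delta)f\ge \delta f^{2}$ and $\bar f'=\delta\bar f^{2}$ yields
\[
(f-\bar f)\bigl[(\partial_t-\Delta)\eta^{2}+8|\nabla\eta|^{2}\bigr]+\delta\eta^{2}(f-\bar f)(f+\bar f)\le 0
\]
at $(x_{0},t_{0})$. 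Since $f-\bar f<0$ there, dividing through and inserting the cutoff estimates constrains $|f+\bar f|(x_{0},t_{0})$ to be of order $C/((Ar)^{2}\delta\,\eta^{2}(x_{0},t_{0}))$. Combined with $\eta\equiv 1$ on $B_{g(t)}(p,3Ar/4)$, this controls $f$ from below on that ball, and sending $\tau\downarrow 0$ recovers the claimed dichotomy $f\ge\min\{-4/(t\delta),-C/((Ar)^{2}\delta)\}$.

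The main obstacle is that the Ricci bound holds only on the tiny ball $B_{g(t)}(p,r)$ while the maximum principle must be run across the far larger ball $B_{g(t)}(p,Ar)$; Perelman's distance lemma is the essential bridge between the two scales, and care is needed to execute the whole argument in the barrier sense since $d_{g(t)}(\cdot,p)$ need not be smooth at cut or focal points. A secondary subtlety is to arrange that the minimum of $F$ is really attained at an interior point of the parabolic region rather than escaping to infinity in $M$, which is why one works on the compactly supported product $\overline{B_{g(t)}(p,Ar)}\times[0,T]$ and uses the ambient-compactness hypothesis 1) on the ball.
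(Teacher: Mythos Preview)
Your overall strategy matches the paper's: both follow Chen's localization of the scalar differential inequality via a cutoff built from Perelman's distance lemma. The paper works with $u=\psi f$ and integrates the resulting ODE for $u_{\min}(t)$, whereas you compare $\eta^{2}(f-\bar f)$ against zero using the explicit supersolution $\bar f(t)=-1/(\delta(t+\tau))$; either variant is fine once the cutoff is correct.

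There is, however, a real gap in your cutoff estimate. With $\eta(x,t)=\phi\bigl(d_{g(t)}(x,p)/(Ar)\bigr)$ and $\phi'\le 0$, Perelman's lemma gives
\[
(\partial_t-\Delta)\eta \;=\; \frac{\phi'}{Ar}\,(\partial_t-\Delta)d \;-\;\frac{\phi''}{(Ar)^{2}}
\;\le\; \frac{|\phi'|}{Ar}\cdot\frac{C(n)}{r}\;+\;\frac{C}{(Ar)^{2}}
\;=\;\frac{C}{Ar^{2}}\;+\;\frac{C}{(Ar)^{2}},
\]
so the dominant term is indeed $C/(Ar^{2})$, exactly as you wrote. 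But then your subsequent claim that $|f+\bar f|(x_{0},t_{0})$ is of order $C/((Ar)^{2}\delta\,\eta^{2})$ does not follow; you would only obtain $C/(Ar^{2}\delta\,\eta^{2})$, and the stated conclusion with $(Ar)^{2}$ in the denominator is lost. The fix, which is the content of Chen's Proposition~2.1 that the paper simply quotes, is to replace $d_{g(t)}(x,p)$ by the time-shifted quantity $d_{g(t)}(x,p)+C(n)t/r$ before feeding it into $\phi$. Perelman's lemma then makes $(\partial_t-\Delta)$ of the shifted distance nonnegative, the first term above acquires the good sign, and only the $\phi''/(Ar)^{2}$ contribution survives, yielding the needed $(\partial_t-\Delta)\eta\le C/(Ar)^{2}$. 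This is precisely where the hypothesis $A\ge 14(n-1)r^{-2}T+2$ enters: it is \emph{not} to make the small ball $B_{g(t)}(p,r)$ ``persist'' as you suggest, but to guarantee that after adding the drift $C(n)t/r\le C(n)T/r$ the region $\{\eta\equiv 1\}$ still contains $B_{g(t)}(p,3Ar/4)$ for every $t\in[0,T]$.
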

\begin{proof}
 By rescaling, we may assume that $\delta=1$. As in \cite[Proposition $2.1$]{C}, there exists a cutoff function $\psi : M \times [0,T] \to \R$ that is compactly supported in $\cup_{t \in [0,T]}B_{g(t)}(p,Ar) \times \{t\}$ and $\psi(x,t)\equiv 1$ wherever $d_{g(t)}(p,x) \le \frac{3Ar}{4}$ and satisfies $\left|\tl \psi+\frac{2|\na \psi|^2}{\psi}\right|\le \frac{C}{(Ar)^2}\sqrt{\psi}$. Let $u(x,t)=\psi(x,t)f(x,t)$. Since there is nothing to prove if $f(x,t) \ge 0$ in $B_{g(t)}\left(p,\frac{3Ar}{4}\right) \times [0,T]$, we may assume that there exists $t_0 \in [0,T]$ such that $\sup_{x \in M}u(x,t_0)=u(x_0,t_0)<0$. If $(x_0,t_0)$ is a smooth point of both $d_{g(t)}(p,x)$ and $f$, then by using $0=\na u(x_0,t_0)=f\na \psi+\psi \na f$, we have at $(x_0,t_0)$,
\begin{align*}
    \tl u &=f\tl \psi+\psi \tl f+\frac{2f |\na \psi|^2}{\psi}\\
    &\ge \frac{Cf}{(Ar)^2}\sqrt{\psi}+\psi f^2 \ge \frac{\psi f^2}{2}-\frac{C^2}{2(Ar)^4}.
\end{align*}
From this, for $u_{\text{min}}(t):=\inf_{x \in M}u(x,t)$, we have
\begin{align*}
\frac{d^-}{dt}u_{\text{min}}(t_0)=&\liminf_{h \searrow0}\frac{u_{\text{min}}(t_0+h)-u_{\text{min}}(t_0)}{h} \\
&\ge \frac{u_{\text{min}}(t_0)^2}{2}-\frac{C^2}{2(Ar)^4}\\
&\ge \frac{u_{\text{min}}(t_0)^2}{4}+\left(\frac{u_{\text{min}}(t_0)^2}{4}-\frac{C^2}{2(Ar)^4}\right)
\end{align*}
and this inequality holds as long as $u_{\text{min}}(t_0)\le 0$. By integrating this inequality, we have
$$
u_{\text{min}}(t)\ge \min\left\{-\frac{4}{t},-\frac{\sqrt{2}C}{(Ar)^2}\right\}
$$
and it completes the proof since $\psi\equiv 1$ in $B_{g(t)}(p,\frac{3Ar}{4}) \times [0,T]$. If $d_{g(t)}(p,x)$ or $f$ is not smooth at $(x_0,t_0)$, then we can choose a barrier function and compute in a same way. \end{proof}

The following corollary is immediate.
\begin{cor} \label{C201}
Let $(M,g(t))_{t \in (-\infty,0]}$ be a complete ancient solution to the Ricci flow (not necessarily having bounded curvature). If a continuous function $f : M \times (-\infty,0] \to \R$ satisfies the inequality
$$\tl f \ge \delta f^2$$in the barrier sense for some $\delta>0$, then $f(x,t)\ge0$ for all $(x,t) \in M \times (-\infty,0]$.
\end{cor}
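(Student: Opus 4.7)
The plan is to deduce Corollary~\ref{C201} from Proposition~\ref{P201} by a time-shift-and-limit argument. Fix an arbitrary $(p, t_0) \in M \times (-\infty, 0]$; the goal is to show $f(p, t_0) \ge 0$. For each $S > 0$, consider the time-shifted solution $\tilde g(t) := g(t - S + t_0)$ together with $\tilde f(x, t) := f(x, t - S + t_0)$ on $M \times [0, S]$. By construction $\tilde f$ still satisfies $\tl \tilde f \ge \delta \tilde f^2$ in the barrier sense, and $\tilde f(p, S) = f(p, t_0)$.

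Next I would verify the hypotheses of Proposition~\ref{P201} with basepoint $p$ on the interval $[0, S]$. Because the flow is smooth on the compact arc $\{p\} \times [t_0 - S, t_0]$, the Ricci tensor is uniformly bounded on some sufficiently small spacetime tube around this arc. Hence I can pick $r = r_S > 0$ small enough that $|\text{Ric}|(x, t) \le (n-1)\, r_S^{-2}$ whenever $x \in B_{\tilde g(t)}(p, r_S)$ and $t \in [0, S]$. I then choose $A = A_S$ with $A_S \ge 14(n-1)\, r_S^{-2}\, S + 2$; completeness of $M$ ensures that $B_{\tilde g(t)}(p, A_S r_S)$ is compactly contained in $M$ for each $t \in [0, S]$. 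All three hypotheses of Proposition~\ref{P201} are then satisfied.

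Applying Proposition~\ref{P201} at the spacetime point $(p, S)$ gives
\[
f(p, t_0) \;=\; \tilde f(p, S) \;\ge\; \min\!\left(-\frac{4}{S \delta},\ -\frac{C(n)}{(A_S r_S)^2 \delta}\right).
\]
As $S \to \infty$ the first term tends to $0$. For the second, the defining inequality $A_S r_S \ge 14(n-1)\, S / r_S$ and the fact that $r_S$ can be kept bounded (no larger than, say, the smoothness radius of the flow at $(p, t_0)$) give $(A_S r_S)^2 \ge c(n)\, S^2 / r_S^2 \to \infty$. Hence both terms in the minimum vanish in the limit, and $f(p, t_0) \ge 0$.

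The only subtlety is the choice of $r_S$, which may have to shrink as $S$ grows, since a priori the curvature of the ancient solution can blow up as $t \to -\infty$ so that no single radius $r$ works on every interval. However, this causes no trouble: the constraint $A_S \ge 14(n-1) r_S^{-2} S$ built into Proposition~\ref{P201} already forces $(A_S r_S)^2$ to grow at least like $S^2 / r_S^2$, so the spatial error term decays regardless. I do not anticipate any other obstacle.
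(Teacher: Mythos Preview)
Your argument is correct and follows essentially the same route as the paper: shift time, apply Proposition~\ref{P201} around $p$, and let the time window go to infinity. The only cosmetic difference is that the paper first sends $A\to\infty$ for each fixed time shift $\tau_0$ (killing the spatial term outright and leaving $f\ge -\tfrac{4}{(t+\tau_0)\delta}$), and then lets $\tau_0\to\infty$; you instead take a single limit $S\to\infty$ with $A_S$ coupled to $S$ and check that $(A_S r_S)^2\to\infty$ as well, which works for the reason you give.
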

\begin{proof} 
For each $p \in M$, we can choose $r>0$ small enough so all assumptions in Proposition \ref{P201} are satisfied. By taking $A\to \infty$ and translating the initial time by $-\tau_0$, we get
$$
f(x,t) \ge -\frac{4}{(t+\tau_0)\delta}.
$$
Then the result is obtained by taking $\tau_0 \to \infty$. 
\end{proof}

In the following sections, we will apply Corollary \ref{C201} multiple times to obtain the curvature improvements.

\section{Higher dimensional curvature improvement} \label{sec:high}
In this section, we will show that every $n$-dimensional, complete ancient solution to the Ricci flow with $n\ge12$ and uniformly PIC automatically has weakly PIC$_2$. Notice that the bounded curvature is not assumed.

\subsection{Uniformly 2-positive Ricci curvature}
In \cite[Proposition $5.2$]{LN}, it is proved that if an ancient solution to the Ricci flow has weakly PIC, then it has 2-nonnegative Ricci curvature. Here, we will show the following uniform version, which will be used in the next subsection.
\begin{prop} \label{prop:2ric}
Let $(M^n,g(t))_{t \in (-\infty,0]}$ be a complete ancient solution to the Ricci flow with uniformly \emph{PIC} for $n \ge 5$. Then there exists $\delta =\delta(n,\theta)>0$ such that
$$
\lambda_1+\lambda_2 \ge \delta R
$$
on $M \times (-\infty,0]$, where $\lambda_1+\lambda_2$ is a sum of the two smallest eigenvalues of $Ric$.
\end{prop}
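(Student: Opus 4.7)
I would apply Corollary~\ref{C201} to the Lipschitz function
\[
F(x,t):=\lambda_1(\mathrm{Ric}(x,t))+\lambda_2(\mathrm{Ric}(x,t))-\delta R(x,t),
\]
for a small constant $\delta=\delta(n,\theta)>0$ to be chosen. The conclusion $F\ge 0$ of Corollary~\ref{C201} is exactly the claimed bound. As a preliminary, \cite[Proposition~5.2]{LN} applied to our (weakly PIC) solution already gives $\lambda_1+\lambda_2\ge 0$, so $\lambda_k\ge 0$ for $k\ge 2$ and $R>0$ on $M\times(-\infty,0]$; in particular $|\mathrm{Ric}|^2\le C(n)R^2$.

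The central task is to verify the reaction inequality $\tl F\ge c\,F^2$ in the barrier sense for some $c=c(n,\theta)>0$. At a point where $V=\mathrm{span}(e_1,e_2)$ is an infimizing 2-plane for $\mathrm{tr}(\mathrm{Ric}|_V)$ and $\{e_i\}$ diagonalizes $\mathrm{Ric}$ with $\lambda_1\le\lambda_2\le\cdots\le\lambda_n$, the standard viscosity computation of $\tl(\lambda_1+\lambda_2)$ from the evolution of Ricci, combined with $\tl R=2|\mathrm{Ric}|^2$, yields
\[
\tl F\;\ge\;2R_{1212}(\lambda_1+\lambda_2)+2\sum_{k\ge 3}(R_{1k1k}+R_{2k2k})\lambda_k-2(\lambda_1^2+\lambda_2^2)-2\delta|\mathrm{Ric}|^2.
\]
Next, I would apply uniformly PIC on the 4-frames $(e_1,e_2,e_k,\pm e_l)$ and $(e_1,e_k,e_2,\pm e_l)$ for distinct $k,l\in\{3,\ldots,n\}$, averaging over the sign of $e_l$ to eliminate the cross-terms $R_{12kl}$ and $R_{1k2l}$. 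This gives the pointwise algebraic estimates
\[
R_{1k1k}+R_{1l1l}+R_{2k2k}+R_{2l2l}\ge 4\theta R,\qquad R_{1212}+R_{1l1l}+R_{2k2k}+R_{klkl}\ge 4\theta R.
\]
Summing over all pairs $\{k,l\}\subset\{3,\ldots,n\}$, weighting by the nonnegative $\lambda_k$, and invoking $|\mathrm{Ric}|^2\le C(n)R^2$, one extracts a lower bound of the form $\tl F\ge c_1(n,\theta)R^2-C(n)(\lambda_1^2+\lambda_2^2)-C(n)\delta R^2$. Since $F\ge -\delta R$ forces $F^2\le \delta^2 R^2$ whenever $F\le 0$, choosing $\delta$ sufficiently small in terms of $n$ and $\theta$ absorbs the indefinite terms and yields $\tl F\ge c\,F^2$ in the barrier sense.

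The main obstacle is precisely this algebraic step: the reaction $Q_V$ carries an a priori sign-uncontrolled $R_{1212}(\lambda_1+\lambda_2)$ contribution and an indefinite $-2(\lambda_1^2+\lambda_2^2)$. It is the uniform gap $4\theta R$ from uniformly PIC, coupled with the already-established $\lambda_1+\lambda_2\ge 0$ from \cite[Proposition~5.2]{LN}, that allows one to dominate these terms by taking $\delta$ small. Once $\tl F\ge cF^2$ is established, Corollary~\ref{C201} applies directly and yields $F\ge 0$ on $M\times(-\infty,0]$, which is the stated inequality.
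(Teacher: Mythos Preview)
Your overall plan matches the paper's: set $F=\lambda_1+\lambda_2-\delta R$, verify a reaction inequality, and invoke Corollary~\ref{C201}. There are, however, two genuine gaps.

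First, the term $-2(\lambda_1^2+\lambda_2^2)$ should not appear in your barrier inequality. Under Uhlenbeck's trick the Ricci tensor evolves by $\tl R_{ij}=2R_{ikjl}R_{kl}$, so the correct inequality is
\[
\tl(\lambda_1+\lambda_2)\ \ge\ 2R_{1212}(\lambda_1+\lambda_2)+2\sum_{k\ge3}(R_{1k1k}+R_{2k2k})\lambda_k,
\]
which is exactly what the paper uses. Worse, even if that extra term were present, it could not be absorbed as you claim: the condition $F\le 0$ only yields $0\le\lambda_1+\lambda_2\le\delta R$, which still allows $\lambda_1\approx -R/(n-4)$ and $\lambda_2\approx R/(n-4)$, so $\lambda_1^2+\lambda_2^2$ can be of order $R^2$ independently of $\delta$. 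Since your constant $c_1(n,\theta)$ is of order $\theta$, the absorption fails for small $\theta$.

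Second, and more essential, ``summing over all pairs $\{k,l\}$, weighting by the nonnegative $\lambda_k$'' is not an argument. Writing $a_i:=R_{1i1i}+R_{2i2i}$ for $i\ge3$, uniform PIC gives $a_i+a_j\ge4\theta R$ for $i\ne j$, but a single $a_i$ may be as negative as $-C(n)R$; if that index carries large $\lambda_i$, the sum $\sum_{i\ge3}a_i\lambda_i$ has no obvious lower bound of size $\theta R^2$. The paper handles this by a case split: at most one index $i_0\ge3$ can have $a_{i_0}<2\theta R$; if $i_0<n$ one pairs it with $n$ using $a_{i_0}+a_n\ge4\theta R$ together with $\lambda_n\ge\lambda_{i_0}$; if $i_0=n$ one invokes the bound $\lambda_n\le R/2$ (a consequence of weak PIC, derived in the paper from $\sum_{i,j<n}R_{ijij}\ge0$, and not mentioned in your outline) to show the deficit is at most $C\delta R^2$. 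Without this case analysis and the bound $\lambda_n\le R/2$, the reaction estimate does not close.
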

\begin{proof} Let $f=\lambda_1+\lambda_2-\delta R$ with $\delta>0$ determined later. Fix a spacetime point $(x,t)$, we choose an orthonormal frame $\{e_1,e_2,\cdots,e_n\}$ such that the Ricci curvature is diagonalized with eigenvalues $R_{11}\le R_{22}\le \cdots \le R_{nn}$. By applying the Uhlenbeck's trick, we have the following evolution equation
$$
\tl R_{ij}=2R_{ikjl}R_{kl}.
$$
Therefore, the function $f$ satisfies the following evolution equation in the barrier sense
\begin{align}
    \tl f& \ge 2(R_{1i1i}+R_{2i2i})R_{ii}-2\delta |\text{Ric}|^2. \label{eq:ric1a}
\end{align}
Now we investigate each term as follows.
On the one hand, from weakly PIC, we have $R_{ii} \ge -\frac{R}{n-4}$, see \cite[Proposition $7.3$]{RFST}. On the other hand, we obtain
$$
R-2R_{nn}=\left(\sum_{i=1}^{n-1}R_{ii}\right)-R_{nn}=\sum_{i,j=1}^{n-1}R_{ijij}\ge0.
$$ 
Therefore, we conclude that $|\text{Ric}|^2 \le nR^2$ and \eqref{eq:ric1a} becomes
\begin{align}
    \tl f& \ge 2(R_{1i1i}+R_{2i2i})R_{ii}-2n\delta R^2. \label{eq:ric1}
\end{align}
Also, we can rewrite
\begin{align}
2(R_{1i1i}+R_{2i2i})R_{ii}&=\sum_{i \ge 3}(R_{1i1i}+R_{2i2i})(2R_{ii}-R_{11}-R_{22})+(R_{11}+R_{22})^2 \label{eq:ric2}
\end{align}
and 
\begin{align*}
&\sum_{i \ge 3}(R_{1i1i}+R_{2i2i})(2R_{ii}-R_{11}-R_{22}) \\
=&2\theta R(2R-n(R_{11}+R_{22}))+\sum_{i \ge 3}(R_{1i1i}+R_{2i2i}-2\theta R)(2R_{ii}-R_{11}-R_{22}).
\end{align*}
If $f(x,t) \le 0$, then we additionally have
$$
2\theta R(2R-n(R_{11}+R_{22}))\ge 2\theta(2-n\delta)R^2.
$$ 
From the PIC condition, we know that there is at most one $i \ge 3$ such that $R_{1i1i}+R_{2i2i}-2\theta R <0$. Based on this observation, there are 3 possibilities.
\begin{enumerate}
    \item If $R_{1i1i}+R_{2i2i}-2\theta R \ge 0$ for all $i =3,\cdots,n$, then we have
\begin{align*}
\sum_{i \ge 3}(R_{1i1i}+R_{2i2i}-2\theta R)(2R_{ii}-R_{11}-R_{22})\ge 0.
\end{align*}
    \item If $R_{1i1i}+R_{2i2i}-2\theta R < 0$ for some $i =3,\cdots,n-1$, then we still have $$\sum_{i \ge 3}(R_{1i1i}+R_{2i2i}-2\theta R)(2R_{ii}-R_{11}-R_{22})\ge 0$$
    since
    \begin{align*}
        &(R_{1i1i}+R_{2i2i}-2\theta R)(2R_{ii}-R_{11}-R_{22})+(R_{1n1n}+R_{2n2n}-2\theta R)(2R_{nn}-R_{11}-R_{22})\\
        &\ge (R_{1n1n}+R_{2n2n}-2\theta R) (2R_{nn}-2R_{ii}) \ge 0
    \end{align*}
    \item If $R_{1n1n}+R_{2n2n}-2\theta R < 0$ and $f(x,t) \le 0$, then there exists a constant $C=C(n,\theta)>0$ such that
    \begin{align*}&\sum_{i \ge 3}(R_{1i1i}+R_{2i2i}-2\theta R)(2R_{ii}-R_{11}-R_{22})\\
    \ge& (2\theta R-R_{1n1n}-R_{2n2n})\left(\sum_{i=3}^{n-1}(2R_{ii}-R_{11}-R_{22})+R_{11}+R_{22}-2R_{nn}\right)\\
   =&(2\theta R-R_{1n1n}-R_{2n2n})(2R-4R_{nn}-(n-2)(R_{11}+R_{22}))  \ge -\delta CR^2\end{align*}
 where the last inequality holds since
$$
2R-4R_{nn}-(n-2)(R_{11}+R_{22}) \ge -(n-2)(R_{11}+R_{22}) \ge -(n-2)\delta R
$$
and
$$
2\theta R-R_{1n1n}-R_{2n2n}\le2\theta R+|R_{1n1n}|+|R_{2n2n}|\le (2\theta+C_1)R,
$$
where $C_1=C_1(n)>0$ since the curvature operator is controlled by the scalar curvature for weakly PIC condition.
\end{enumerate}
Therefore, in any case, we have
\begin{align}
    \sum_{i \ge 3}(R_{1i1i}+R_{2i2i})(2R_{ii}-R_{11}-R_{22})\ge 2\theta(2-n\delta)R^2-\delta CR^2\ge 3\theta R^2  \label{eq:ric3}
    \end{align}
    if $f(x,t) \le 0$ by taking $\delta>0$ small enough.
By combining \eqref{eq:ric1}, \eqref{eq:ric2} and \eqref{eq:ric3}, we finally obtain that if $f(x,t) \le 0$,
$$\tl f\ge (R_{11}+R_{22})^2+3\theta R^2-2n\delta R^2 \ge (R_{11}+R_{22})^2+\delta^2R^2$$
by decreasing $\delta>0$ if necessary. It implies that
$$\tl f^- \ge \frac{1}{2}(f^-)^2$$
where $f^- := \min\{f,0\}$.
Now the proof is complete by Corollary \ref{C201}. \end{proof}

\subsection{Weakly PIC$_2$ condition}

Now we prove the main theorem in the section. 
\begin{thm} \label{thm:pic2}
Let $(M^n,g(t))_{t \in (-\infty,0]}$ be a complete ancient solution to the Ricci flow with uniform \emph{PIC} for $n \ge 12$. Then it has weakly \emph{PIC}$_2$.
\end{thm}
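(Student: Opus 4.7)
The plan is to adapt Brendle's curvature improvement from \cite{B1}, originally proved in the compact setting by a standard maximum principle, to our noncompact (possibly unbounded curvature) situation, by replacing Hamilton's maximum principle for systems with the localized parabolic estimate Corollary \ref{C201}.

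First, I would invoke Brendle's continuous family of closed convex curvature cones $\{\mathcal{C}_s\}_{s\in[0,1]}$ from \cite{B1}, each invariant under the Hamilton ODE $dR/dt=Q(R)=R^2+R^{\#}$, constructed so that $\mathcal{C}_0$ contains the uniform PIC cone associated with our constant $\theta$ (hence contains $\mathrm{Rm}(x,t)$ for all $(x,t)$ by hypothesis), and $\mathcal{C}_1$ is contained in the weakly PIC$_1$ cone. The dimensional restriction $n\ge 12$ enters precisely in this cone construction. It suffices to prove that $\mathrm{Rm}(x,t)\in\mathcal{C}_s$ for every $s\in[0,1]$, since then $s=1$ yields weakly PIC$_1$ on $M\times(-\infty,0]$, and \cite[Lemma 4.2]{BCW} (or \cite[Proposition 6.2]{LN}) upgrades weakly PIC$_1$ to weakly PIC$_2$.

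Next, for each $s$ I would define a scalar function measuring violation of the cone inclusion: let $\Sigma_s$ denote an appropriate normalized set of supporting linear functionals to $\mathcal{C}_s$, and set
\[
f_s(x,t)\;=\;\inf_{\ell\in\Sigma_s}\ell\bigl(\mathrm{Rm}(x,t)\bigr),
\]
so that $\mathrm{Rm}(x,t)\in\mathcal{C}_s$ is equivalent to $f_s(x,t)\ge0$. Using the Uhlenbeck evolution $\tl\mathrm{Rm}=Q(\mathrm{Rm})$, one has, in the barrier sense, $\tl f_s\ge \inf_{\ell\in\Sigma_s}\ell(Q(\mathrm{Rm}))$ at points realizing the infimum. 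The key input is Brendle's pointwise estimate: along $\partial\mathcal{C}_s$, uniform PIC (and the 2-positive Ricci improvement of Proposition \ref{prop:2ric}) forces $\ell(Q(R))\ge \delta_s R^2$ for a uniform $\delta_s>0$. Because uniform PIC controls the whole curvature operator by $R$, this translates into a genuine quadratic differential inequality $\tl f_s\ge\delta(f_s)^2$ whenever $f_s\le 0$.

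Finally, applying Corollary \ref{C201} to $f_s$ (no curvature bound needed) yields $f_s\ge 0$ on $M\times(-\infty,0]$ for each $s\in[0,1]$, whence $\mathrm{Rm}\in\mathcal{C}_1\subset\{\text{weakly PIC}_1\}$, and the cited lemma concludes the proof. The main obstacle I expect is Step 2: extracting the truly quadratic right-hand side $\delta f_s^2$, rather than only the non-strict $\tl f_s\ge 0$ which a naive boundary argument gives. This requires Brendle's delicate pinching at $\partial\mathcal{C}_s$, reinforced by Proposition \ref{prop:2ric} to rule out the degenerate regime in which $R$ is too small compared to the violation, together with a careful barrier/semi-continuity argument at points where the infimum in the definition of $f_s$ is not attained uniquely. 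The family parameter $s$ is then transported continuously from $0$ to $1$, but the nontrivial analytic input at each $s$ is this quadratic ODE estimate on the cone boundary.
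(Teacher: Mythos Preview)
Your proposal is essentially the paper's approach: Brendle's cone family from \cite{B1} (this is where $n\ge 12$ enters), Proposition \ref{prop:2ric} to enter the family at an initial parameter, a continuity argument along the family with Corollary \ref{C201} replacing the compact maximum principle, conclusion weakly PIC$_1$, and then \cite[Lemma 4.2]{BCW}/\cite[Proposition 6.2]{LN} to reach PIC$_2$. You also correctly isolate the crux as producing a genuinely quadratic inequality at the cone boundary.

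The one place where the paper is sharper than your sketch is the choice of scalar violation function, and this matters for exactly the obstacle you flag. Rather than $f_s=\inf_\ell \ell(\mathrm{Rm})$, the paper sets $\lambda(x,t)$ to be the smallest number with $S:=\mathrm{Rm}+(\lambda-\delta R)I\in\partial\hat{\mathcal{C}}(b)$, for small $\delta>0$. Two things are gained. First, because $S$ always sits \emph{on} the boundary, the transversality estimate $Q(S)-\tau\,\mathrm{scal}(S)^2 I\in T_S\hat{\mathcal{C}}(b)$ applies directly to $S$; comparing $Q(S)$ with $Q(\mathrm{Rm})$ and using $|\mathrm{Rm}|\le CR$ then yields the clean inequality $\tl\lambda+\lambda^2\le 0$ wherever $\lambda\ge 0$, so Corollary \ref{C201} applied to $-\lambda$ gives $\lambda\le 0$. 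Second, the built-in offset $-\delta R\,I$ means that $\lambda\le 0$ reads as $\mathrm{Rm}-\delta R\,I\in\hat{\mathcal{C}}(b)$, i.e.\ the curvature lies \emph{strictly} inside by a definite margin; this is precisely what the openness step of the continuity argument needs. Your $f_s$ would require an analogous offset to get openness, and an argument that the quadratic lower bound survives when $\mathrm{Rm}$ is slightly outside the cone (where raw transversality says nothing); the paper's $\lambda$ device sidesteps both issues at once.
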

\begin{proof} We first recall that by applying Uhlenbeck's trick, the Riemannian curvature under the Ricci flow is deformed by
\begin{align}
\tl R_{ijkl}=Q(\Rm)_{ijkl} :=R_{ijpq}R_{klpq}+2R_{ipkq}R_{jplq}-2R_{iplq}R_{jpkq}. \label{eq:riem}
\end{align}
In the proof, we will work with a local orthonormal basis given as follows. For each point $(x_0,t_0)$, we choose an orthonormal basis $\{e_1,\cdots,e_n\}$ with respect to $g(t)$. After one applies the Uhlenbeck's trick by taking a pullback of $g(t)$ using a 1-parameter family of bundle isomorphism. Then a pullback of $g(t)$ is independent to $t$. Therefore, $\{e_1,\cdots,e_n\}$ is still an orthonormal basis at $(x_0,t)$ for any $t$ with respect to this pullback. By parallel transport, there exists an orthonormal frame on a spacetime neighborhood of $(x_0,t_0)$. With the help of such a frame, locally around $(x_0,t_0)$, we can consider $\Rm(x,t)$ in a fixed vector space of algebraic curvature tensor $\mathscr C_{B}(\R^n)$.

In \cite[Definition 3.1]{B1} and \cite[Definition 4.1]{B1}, Brendle has constructed two continuous families of closed, convex, $O(n)$-invariant cones $\mathcal{C}(b)$ and $\tilde{\mathcal{C}}(b)$ in $\mathscr C_{B}(\R^n)$ for $n \ge 12$. Moreover, the following properties hold:
\begin{itemize}
    \item $\mathcal{C}(b)$ is defined for all $b \in (0,b_{\text{max}}]$.
    \item $\tilde{\mathcal{C}}(b)$ is defined for all $b \in (0,\tilde{b}_{\text{max}}]$.
    \item $\lim_{b \to 0}\tilde{\mathcal{C}}(b) = \mathcal{C}(b_{\text{max}}) \cap \text{PIC}_1$. 
    \item $\mathcal{C}(b_{\text{max}})=\tilde{\mathcal{C}}(\tilde{b}_{\text{max}})$.
\end{itemize}
As in \cite[Section $5$]{B1}, we define
$$\hat{\mathcal{C}}(b):=\begin{cases} \mathcal{C}(b) & \text{if}~b \in [0,b_\text{max})\\ \tilde{\mathcal{C}}(b_\text{max}+\tilde{b}_\text{max}-b)&\text{if}~b \in [b_\text{max},b_\text{max}+\tilde{b}_\text{max}).\end{cases}$$

From the construction, $I=\frac{1}{2} \text{id} \owedge \text{id}=\delta_{ik}\delta_{jl}-\delta_{il}\delta_{jk}$ is in the interior of $\hat{\mathcal{C}}(b)$, where $\owedge$ is the Kulkarni-Nomizu product. Moreover, if $\Rm \in \hat{\mathcal{C}}(b)$, then $Q(\Rm)$ lies in the interior of $T_{\Rm}\hat{\mathcal{C}}(b)$, see \cite[Theorem 3.2]{B1}. Here $T_{\Rm}\hat{\mathcal{C}}(b)$ is the tangent cone to $\hat{\mathcal{C}}(b)$ at $\Rm$, see \cite[Definition $5.1$]{RFST}.

\textbf{Claim 1.} There exists $b_0>0$ such that the curvature tensor of $(M,g(t))$ is contained in $\hat{\mathcal{C}}(b_0)$ for all $t \in (-\infty,0]$. 

First, if $\Rm$ has uniformly PIC, then the curvature operator is controlled by the scalar curvature. That is, there exists a $C=C(n)>0$ such that
\begin{align}
|\Rm| \le CR. \label{eq:311}
\end{align}

Next we recall the transformation $l_{a,b}$, defined in \cite{BW}, so that there exists $S \in \mathscr C_{B}(\R^n)$ such that
\begin{align}
l_{a,b}(S):=S+b\text{Ric}(S)\owedge \text{id} +\left(\frac{2(a-b)}{n} \text{scal}(S) \right) I=\Rm. \label{eq:312}
\end{align}

Therefore, if we choose $T=\frac{\theta}{2}RI$, then it is clear from \eqref{eq:311}, \eqref{eq:312} and Proposition \ref{prop:2ric} that $\Rm \in \hat{\mathcal{C}}(b_0)$ if $b_0$ is sufficiently small, see \cite[Definition $3.1$]{B1}. Here, the condition (iii) in \cite[Definition $3.1$]{B1} is guaranteed by Proposition \ref{prop:2ric}.

Now we prove that if the curature operator $\Rm$ of $(M,g(t))_{t \in (-\infty,0]}$ is contained in $\hat{\mathcal{C}}(b)$ for some $b \in (0,b_\text{max}+\tilde{b}_\text{max})$, then $\Rm \in \hat{\mathcal{C}}(b')$ if $b'$ is sufficiently close to $b$.

\textbf{Claim 2.} There exists a constant $\tau \in (0,1)$ depending only on $n$ and $b$ such that $Q(\text{Rm})-\tau R^2 I \in T_{\Rm}\hat{\mathcal{C}}(b)$.

If it is not true, then since $\hat{C}(b)$ is a cone and $R^{-2}Q(\Rm)$ is scaling invariant, one can choose a sequence of counterexamples $\Rm_k\in \del \hat{C}(b)$ such that $|\Rm_k|=1$ and $Q(\Rm_k)-k^{-2}R_k^2I$ is on the boundary of $T_{\Rm_k}\hat{\mathcal{C}}(b)$ for all $k>0$. By taking a subsequence,  $\Rm_k$ converges to $\Rm_\infty \in \del \hat{\mathcal{C}}(b)$ and $Q(\Rm_\infty)$ is on the boundary of $T_{\Rm_\infty}\hat{\mathcal{C}}(b)$. However, this contradicts the transversality of $\hat{\mathcal{C}}(b)$.

Now we define a function $\lambda$. For any spacetime point $(x,t)$, let $\lambda(x,t)$ be the smallest number so that the curvature operator $S:=\Rm+(\lambda-\delta R)I$ lies on the boundary of $\hat{C}(b)$, where the constant $\delta>0$ is determined later. Here we assume $\lambda$ is locally smooth and the general case will be handled at the end of the proof. For any $(x,t) \in M \times (-\infty,0]$, $\Rm(x,t)$ is contained in $\hat{\mathcal{C}}(b)$ and hence $\lambda(x,t)-\delta R(x,t) \le 0$. From the direct computation, we have
$$
Q(S)=Q(\text{Rm})+2(\lambda-\delta R)\text{Ric}\owedge \text{id}+2(n-1)(\lambda-\delta R)^2I.
$$
Therefore, we can derive the following evolution equation
\begin{align}
    \tl S&=Q(\text{Rm})+\left(\tl \lambda\right)I-2\delta |\text{Ric}|^2I \notag\\
    &=Q(S)-2(\lambda-\delta R)\text{Ric}\owedge \text{id}-2(n-1)(\lambda-\delta R)^2I-2\delta |\text{Ric}|^2I+\left(\tl \lambda\right)I.  \label{eq:312a}
\end{align}
Now, wherever $\lambda(x,t) \ge 0$, we know that
$$\text{scal}(S)=R+n(n-1)(\lambda-\delta R)\ge \frac{R}{2}$$
for small enough $\delta>0$ and $$|\lambda-\delta R|=\delta R-\lambda\le \delta R.$$

From \eqref{eq:311}, we have
\begin{align*}
-2(\lambda-\delta R)\text{Ric}\owedge \text{id}-2(n-1)(\lambda-\delta R)^2I-2\delta |\text{Ric}|^2I \ge-C_1\delta R^2 I
\end{align*}
for a constant $C_1=C_1(n)>0$.
Therefore, we obtain from \eqref{eq:312a}
$$
\tl S \ge \left(\tl \lambda \right)I+Q(S)-C_1\delta R^2I,
$$
wherever $\lambda(x,t) \ge 0$.
Also, since $\lambda-\delta R \le 0$ and $R>0$, if $\lambda(x,t) \ge 0$, then we have $$\lambda^2(x,t) \le \delta^2 R^2(x,t)$$ which implies that
$$-C_1\delta R^2-\lambda^2\ge -(C_1\delta+\delta^2)R^2 \ge -C_2\delta \text{scal}^2(S),$$ where $C_2=4(C_1+1)$. As a result, we obtain
\begin{align}
\tl S \ge \left(\tl \lambda+\lambda^2\right)I+Q(S)-C_2\delta \text{scal}^2(S) \label{E311}
\end{align}
wherever $\lambda(x,t) \ge 0$.
Now we observe the following. 

\textbf{Claim 3.} $\tl S$ is not contained in the interior of $T_S\hat{\mathcal{C}}(b)$.

To show the Claim 3, we fix a point $(x_0,t_0) \in M \times (-\infty,0]$ and choose a supporting plane $H$ of $\hat{\mathcal{C}}(b)$ at $S(x_0,t_0)$ with a normal vector $\nu$ pointing toward a half-space containing $\hat{\mathcal{C}}(b)$. Then we can see that $F(p)=\left<p,\nu\right>$ is a distance function between $p$ and $H$. So the function $F(S(x_0,t))$ has its local minimum at $t=t_0$. It implies that
$$
0=\left.\frac{\del}{\del t} F(S(x_0,t))\right|_{t=t_0}=\left< \frac{\del}{\del t} S(x_0,t_0),\nu \right>.
$$
Also, if we choose any tangent vector $u \in T_{x_0}M$ and consider a geodesic $\gamma(s)$ starting from $x_0$ with a directional vector $u$, then $F(S(\gamma(s),t_0))$ has its local minimum at $s=0$. So we have
$$
0\le \left.\frac{d^2}{ds^2}F(S(\gamma(s),t_0))\right|_{s=0}=\left< \na_u \na_u S(x_0,t_0),\nu \right>.
$$
After taking the sum for $u$, we get $\left< \Delta S(x_0,t_0),\nu \right>\ge0$. By combining this two results, we get
$$\left< \tl S(x_0,t_0),\nu \right> \le 0$$
which shows that $\tl S(x_0,t_0)$ can not be contained in the interior of $T_S\hat{\mathcal{C}}(b)$. It verifies the Claim 3.

Since $S$ is contained of $\hat{\mathcal{C}}(b)$, we know from Claim 2 that $Q(S)-\tau \text{scal}^2(S) I \in T_S\hat{\mathcal{C}}(b)$. Therefore, if we choose $\delta>0$ small enough so that $\delta<\frac{\tau}{2C_2}$, then $Q(S)-C_2\delta \text{scal}^2(S)I$ is contained in the interior of $T_S\hat{\mathcal{C}}(b)$. Combining this with Claim 3, we conclude that the first term $\left(\tl \lambda+\lambda^2 \right)I$ on the right side of \eqref{E311} should not be contained in $T_S\hat{\mathcal{C}}(b)$. Therefore, we obtain
\begin{align}
\tl\lambda+\lambda^2 \le 0 \label{E311aa}
\end{align}
wherever $\lambda(x,t) \ge 0$. By applying Corollary \ref{C201} on $(-\lambda)^-$, we obtain $\lambda(x,t) \le 0$ for all $(x,t) \in M \times (-\infty,0]$. In sum, we have shown that $\Rm-\delta R I \in \hat{\mathcal{C}}(b)$ on $M \times (-\infty,0]$. Therefore, $\Rm \in \hat{\mathcal{C}}(b')$ if $b'$ is close to $b$.

Now, let us consider the case when $\lambda$ is not smooth at $(x_0,t_0)$. Note that $S(x_0,t_0)=\Rm(x_0,t_0)+(\lambda(x_0,t_0)-\delta R(x_0,t_0))I$ is on the boundary of $\hat{\mathcal{C}}(b)$. So we can choose a supporting hyperplane $H$ of $\hat{\mathcal{C}}(b)$ at $S(x_0,t_0)$. Using this hyperplane, we can define a new function $\tilde{\lambda}$ which is the smallest number so that $\tilde{S}:=\Rm+(\tilde{\lambda}-\delta R)I$ lies on $H$. Clearly, $\tilde{\lambda}(x,t)$ is smooth and $\tilde{\lambda}(x_0,t_0)=\lambda(x_0,t_0)$. Also, since $I$ is in the interior of $\hat{\mathcal{C}}(b)$, we have $\tilde{\lambda}(x,t) \le \lambda(x,t)$ for all $(x,t)$ in a small neighborhood of $(x_0,t_0)$. In other words, the function $\tilde{\lambda}$ is a lower barrier function of $\lambda$ from below. Also, since $F(p)=\left< p,\nu\right>$ is constant for $p \in H$, we can use the proof of Claim 3 again to show that $\tl \tilde{S}(x_0,t_0)$ has to be contained in $H$. From the same argument, $\tilde{\lambda}$ satisfies the differential inequality
$$\tl \tilde{\lambda}+\tilde{\lambda}^2 \le 0$$
at $(x_0,t_0)$, wherever $\tilde{\lambda}(x_0,t_0) \ge 0$. Therefore, \eqref{E311aa} holds in the barrier sense, which is sufficient to apply Corollary \ref{C201}.

Now we set 
$$
\mathcal{I}:=\{b \in (0,b_\text{max}+\tilde{b}_\text{max})~|~\Rm(x,t)~\text{is contained in }\hat{\mathcal{C}}(b)~\text{for all }(x,t) \in M \times (-\infty,0]\}.
$$
Then $\mathcal{I}$ is nonempty since $b_0 \in \mathcal{I}$ and open from the previous argument. Also, it is closed since $\hat{\mathcal{C}}(b)$ is a continuous family. Therefore, it implies that $\Rm(x,t)$ is contained in $\hat{\mathcal{C}}(b)$ for all $b \in (0,b_\text{max}+\tilde{b}_\text{max})$. By taking $b \to (b_\text{max}+\tilde{b}_\text{max})$, we can conclude that $(M,g(t))_{t \in (-\infty,0]}$ has weakly $\mathrm{PIC}_1$. Then by \cite[Lemma 4.2]{BCW}(see also \cite[Proposition 6.2]{LN}), we conclude that $(M,g(t))_{t \in (-\infty,0]}$ has weakly $\mathrm{PIC}_2$.

In sum, the proof of Theorem \ref{thm:pic2} is complete. 
\end{proof}

In general, we can follow \cite{BHS11} (see also \cite{Ka19}) and consider a cone $C$ in $\mathscr C_{B}(\R^n)$ with $(*)$ conditon defined as follows:
\begin{itemize}
    \item $C$ is closed, convex, $O(n)$-invariant, and of full-dimension.
    \item $C$ is transversally invariant under the Hamilton's ODE: $\dfrac{d \Rm}{dt}=Q(\Rm)$.
    \item Every algebraic curvature tensor $\Rm \in C \backslash \{0\}$ has positive scalar curvature.
    \item The identity $I$ lies in the interior of $C$.
\end{itemize}

Now the same argument of Theorem \ref{thm:pic2} proves the following theorem, which is motivated by \cite[Theorem $9$]{BHS11} on ancient solutions to the compact Ricci flow.

\begin{thm}\label{thm:cones}
Let $C(s),\,s\in [0,1],$ be a continuous family of cones in $\mathscr C_{B}(\R^n)$ with $(*)$ condition. Suppose $(M^n,g(t))_{t \in (-\infty,0]}$ is a complete ancient solution to the Ricci flow such that $\emph{Rm}(x,t) \in C(0)$ for all points $(x,t) \in M \times (-\infty,0]$. Then $\emph{Rm}(x,t) \in C(1)$ for all $(x,t) \in M \times (-\infty,0]$.
\end{thm}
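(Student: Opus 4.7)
The plan is to run the continuity method along the family $C(s)$. Define
$$\mathcal{I} := \{s \in [0,1] : \Rm(x,t) \in C(s) \text{ for all } (x,t) \in M \times (-\infty,0]\}.$$
By hypothesis $0 \in \mathcal{I}$, and $\mathcal{I}$ is closed because the family $s \mapsto C(s)$ is continuous. Showing $\mathcal{I}$ is open in $[0,1]$ will then force $\mathcal{I} = [0,1]$, yielding the theorem.

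Two uniform consequences of the $(*)$ conditions drive the openness argument. First, since every $\Rm \in C(s) \setminus \{0\}$ has positive scalar curvature and $C(s)$ is a closed cone, compactness of the unit sphere in $C(s)$ gives $|\Rm| \le C_1 R$ for all $\Rm \in C(s)$, uniform in $s$ on compact subintervals by continuity of the family. Second, the strict transversal invariance of $C(s)$ under Hamilton's ODE, combined with the rescaling-and-compactness argument of Claim~2 in the proof of Theorem~\ref{thm:pic2}, produces $\tau(s) > 0$ such that $Q(\Rm) - \tau(s) R^2 I \in T_{\Rm} C(s)$ for every $\Rm \in C(s)$.

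Fix $s_0 \in \mathcal{I}$ and small $\delta > 0$ to be chosen. For each $(x,t)$ let $\lambda(x,t)$ be the smallest number such that $S := \Rm + (\lambda - \delta R)\, I$ lies on $\partial C(s_0)$; since $I$ is interior to $C(s_0)$ and $\Rm \in C(s_0)$, the function $\lambda$ is well-defined and satisfies $\lambda \le \delta R$. Mimicking the derivation of \eqref{eq:312a}, in the region $\{\lambda \ge 0\}$ one obtains
$$\tl S \ge (\tl \lambda + \lambda^2)\, I + Q(S) - C_2 \delta\, \text{scal}^2(S)\, I,$$
with $C_2 = C_2(C_1, n)$. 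Choosing $\delta < \tau(s_0)/(2 C_2)$ places $Q(S) - C_2 \delta\, \text{scal}^2(S)\, I$ in the interior of $T_S C(s_0)$. The convexity argument of Claim~3 of Theorem~\ref{thm:pic2} uses only $S \in \partial C(s_0)$ and convexity of $C(s_0)$, so it transfers verbatim to show $\tl S \notin \text{int}\, T_S C(s_0)$, whence $\tl \lambda + \lambda^2 \le 0$ in the barrier sense; non-smoothness of $\lambda$ is handled by the supporting-hyperplane lower barrier used at the end of the proof of Theorem~\ref{thm:pic2}.

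Corollary~\ref{C201} applied to $(-\lambda)^-$ then gives $\lambda \le 0$ on $M \times (-\infty,0]$, i.e.\ $\Rm - \delta R\, I \in C(s_0)$ throughout spacetime. Since $I$ is interior to $C(s_0)$ and the family is continuous, this margin implies $\Rm \in C(s)$ for every $s$ in a neighborhood of $s_0$, so $\mathcal{I}$ is open. I expect the main obstacle to be ensuring uniform positivity of $\tau(s)$ and a uniform size of the final $s$-neighborhood across compact subintervals of $[0,1]$; both ultimately rest on strict transversality in the $(*)$ conditions together with continuity of the family, and should go through by applying the single-cone compactness reasoning uniformly in $s$.
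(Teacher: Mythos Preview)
Your proposal is correct and follows essentially the same approach as the paper, which explicitly states that ``the same argument of Theorem~\ref{thm:pic2} proves'' this result. Your closing worry about uniformity of $\tau(s)$ and of the $s$-neighborhood is unnecessary: the continuity method only requires openness at each fixed $s_0$, for which $\tau(s_0)>0$ and a single (pointwise-uniform, via scale-invariance of the cone and the bound $|\Rm|\le C_1 R$) neighborhood suffice.
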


\section{4-dimensional curvature improvement}
\subsection{Ricci flow with uniformly PIC}
In this subsection, we consider a complete $4$-dimensional ancient solution to the Ricci flow with uniformly PIC and prove the curvature improvement. Let us remark that the same result is given in \cite[Theorem $1.3$]{B3} for steady solitons. However, unlike the steady soliton case, the boundedness of curvature is not obtained automatically. Therefore, we rely on Corollary \ref{C201} to obtain the result.

First, we recall some notations introduced in \cite{H}. After using the self-dual and anti-self-dual decomposition of $\bigwedge^2 \R^4=\bigwedge_+ \oplus \bigwedge_-$, we can write the curvature operator as 
$$
\Rm=\begin{pmatrix}
A & B \\ B^t & C \end{pmatrix}.
$$
Moreover, let $a_1 \le a_2 \le a_3$ and $c_1 \le c_2 \le c_3$ be eigenvalues of $A$ and $C$, respectively. Also, we denote the eigenvalues of the symmetric matrix $\sqrt{BB^t}$ by $0 \le b_1 \le b_2 \le b_3$. It is clear by the Bianchi identity that
$$
\text{tr}(A)=a_1+a_2+a_3=\frac{R}{2}=c_1+c_2+c_3=\text{tr}(C).
$$
In addition, we recall that being weakly PIC is equivalent to $\min \{a_1+a_2,c_1+c_2\} \ge 0$, see \cite{H1}.

Next, we prove the following lemma.

\begin{lem} \label{lem:400}
Let $(M^4,g(t))_{t \in (-\infty,0]}$ be a complete 4-dimensional ancient solution to the Ricci flow with weakly \emph{PIC}. Then we have 
$$
a_1 \ge 0 \quad \text{and} \quad c_1\ge 0
$$
on $M \times (-\infty,0]$.
\end{lem}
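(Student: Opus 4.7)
The plan is to apply Corollary \ref{C201} to the smallest eigenvalue $a_1$ of the self-dual block $A$ of the curvature operator, by deriving the parabolic inequality $\tl a_1 \ge a_1^2$ in the barrier sense wherever $a_1 < 0$. The same argument applied to the anti-self-dual block (with $A$ and $BB^t$ replaced by $C$ and $B^t B$) will yield $c_1 \ge 0$.

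After Uhlenbeck's trick, Hamilton's evolution equation in the $4$-dimensional block decomposition reads $\tl A = A^2 + BB^t + 2 A^{\#}$, where $A^{\#}$ is the classical adjugate of the symmetric $3\times 3$ matrix $A$, so that for an orthonormal eigenbasis $\{v_1, v_2, v_3\}$ with $Av_i = a_i v_i$ one has $A^{\#} v_1 = a_2 a_3 \, v_1$. Fix a spacetime point $(x_0, t_0)$, pick a unit eigenvector $v$ of $A(x_0, t_0)$ corresponding to $a_1(x_0, t_0)$, and extend $v$ to a constant local frame in the abstract bundle (under Uhlenbeck's trick the frame may be taken parallel). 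The scalar function $h(x, t) := \langle v, A(x,t) v \rangle$ is smooth and satisfies $h \ge a_1$ by the Rayleigh characterization, with equality at $(x_0, t_0)$. A direct computation at that point gives
\begin{equation*}
\tl h(x_0, t_0) = \langle v, (A^2 + BB^t + 2 A^{\#}) v \rangle = a_1^2 + |B^t v|^2 + 2 a_2 a_3 \ge a_1^2 + 2 a_2 a_3,
\end{equation*}
so $h$ provides a smooth upper barrier establishing $\tl a_1 \ge a_1^2 + 2 a_2 a_3$ at $(x_0, t_0)$ in the barrier sense.

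The weakly PIC hypothesis now pins down the sign of $a_2 a_3$: if $a_1(x_0, t_0) < 0$, then $a_1 + a_2 \ge 0$ forces $a_2 \ge -a_1 > 0$, and the ordering $a_2 \le a_3$ gives $a_3 \ge a_2 > 0$, whence $a_2 a_3 \ge a_2^2 \ge a_1^2$. Consequently, $\tl a_1 \ge 3 a_1^2 \ge a_1^2$ in the barrier sense at every spacetime point where $a_1 < 0$. The continuous function $f := \min\{a_1, 0\}$ therefore satisfies $\tl f \ge f^2$ in the barrier sense on $M \times (-\infty, 0]$: the inequality is the one just obtained where $a_1 < 0$, and is trivial where $f \equiv 0$. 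Corollary \ref{C201} then yields $f \ge 0$, which forces $a_1 \ge 0$. The one subtle point to check, and the expected main obstacle, is that the barrier $h$ is compatible with the cutoff machinery in Proposition \ref{P201}: at a spacetime minimum of $\psi f$ with strictly negative minimum value, the inequalities $\psi \ge 0$ and $h \ge a_1 = f$ force $\psi h$ to attain its minimum at the same point, so the smooth computation in the proof of Proposition \ref{P201} applies verbatim with $f$ replaced by $h$ in a neighborhood of that point.
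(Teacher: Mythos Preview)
Your proof is correct and takes essentially the same approach as the paper: derive the barrier-sense inequality $\tl a_1 \ge a_1^2 + b_1^2 + 2a_2a_3$ from Hamilton's evolution equations and then invoke Corollary~\ref{C201}. One small simplification you missed is that weakly PIC forces $a_2a_3 \ge 0$ \emph{everywhere} (when $a_1 \ge 0$ the ordering $a_3 \ge a_2 \ge a_1 \ge 0$ already does it), so the paper applies Corollary~\ref{C201} directly to $f=a_1$ and avoids both the truncation $\min\{a_1,0\}$ and your final paragraph on barrier compatibility with the cutoff.
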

\begin{proof} From the evolution equation of the curvature operator (see \cite{H1}), we have
\begin{align*}
&\tl a_1 \ge a_1^2+b_1^2+2a_2a_3 \ge a_1^2, \\
&\tl c_1 \ge c_1^2+b_1^2+2c_2c_3 \ge c_1^2.
\end{align*}
Now the result follows from Corollary \ref{C201}.
\end{proof}

Next, we show that $a_3$ and $c_3$ are controlled solely by $a_1$ and $c_1$, respectively.
\begin{lem} \label{lem:401}
Let $(M^4,g(t))_{t \in (-\infty,0]}$ be a complete 4-dimensional ancient solution to the Ricci flow with uniformly \emph{PIC}. Then we have
$$a_3 \le (6\Lambda^2+1)a_1$$
$$c_3 \le (6\Lambda^2+1)c_1$$
on $M \times (-\infty,0]$.
\end{lem}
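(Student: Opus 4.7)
The plan is to apply Corollary~\ref{C201} to the function $f := (6\Lambda^2+1)\,a_1 - a_3$, and symmetrically to $(6\Lambda^2+1)\,c_1 - c_3$. By Lemma~\ref{lem:400} we already have $a_1 \ge 0$, so the claim reduces to ruling out $f < 0$. Writing $K := 6\Lambda^2+1$ and combining Hamilton's lower barrier $\tl a_1 \ge a_1^2 + 2 a_2 a_3 + b_1^2$ (already used in Lemma~\ref{lem:400}) with its upper-barrier companion $\tl a_3 \le a_3^2 + 2 a_1 a_2 + b_3^2$ for the top eigenvalue of the $A$-block, one obtains in the barrier sense
\[
\tl f \;\ge\; K\bigl(a_1^2 + b_1^2\bigr) + 2K\,a_2 a_3 - a_3^2 - b_3^2 - 2 a_1 a_2.
\]

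The core of the argument is to estimate this from below by $\delta f^2$ on the set $\{f \le 0\}$. There one has $a_3 \ge K a_1$, which combined with the uniformly \emph{PIC} bound $a_3 \le \Lambda(a_1+a_2)$ yields
\[
a_2 \;\ge\; \tfrac{K-\Lambda}{\Lambda}\, a_1, \qquad a_1+a_2 \;\le\; \tfrac{K}{K-\Lambda}\, a_2, \qquad a_3,\,b_3 \;\le\; \tfrac{K\Lambda}{K-\Lambda}\, a_2.
\]
Dropping the nonnegative $K(a_1^2+b_1^2)$ and using the ordering $a_3 \ge a_2$ to replace $2K a_2 a_3$ by $2K a_2^2$, one obtains
\[
\tl f \;\ge\; \Bigl(2K - \tfrac{2 K^2 \Lambda^2}{(K-\Lambda)^2} - \tfrac{2\Lambda}{K-\Lambda}\Bigr) a_2^2,
\]
and the coefficient on the right is strictly positive for every $\Lambda \ge 1$ precisely because of the choice $K = 6\Lambda^2+1$ (for $\Lambda=1$ it equals roughly $10.95$; for large $\Lambda$ it behaves like $10\Lambda^2$). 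Since $f^2 = (a_3 - Ka_1)^2 \le a_3^2 \le \tfrac{K^2\Lambda^2}{(K-\Lambda)^2}\, a_2^2$ on $\{f\le 0\}$, this rearranges to $\tl f \ge \delta f^2$ in the barrier sense there, for some $\delta = \delta(\Lambda) > 0$.

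Finally, passing to $f^- := \min\{f, 0\}$ as in the proof of Proposition~\ref{prop:2ric} upgrades the previous inequality to $\tl f^- \ge \delta (f^-)^2$ in the barrier sense on all of $M\times(-\infty,0]$, and Corollary~\ref{C201} then forces $f^- \equiv 0$, i.e.\ $a_3 \le (6\Lambda^2+1)\,a_1$; the same reasoning applied to the $C$-block gives $c_3 \le (6\Lambda^2+1)\,c_1$. The main obstacle is the algebraic step: not only must the coefficient of $a_2^2$ be strictly positive, but it must also leave enough margin after division by $K^2\Lambda^2/(K-\Lambda)^2$ to produce a positive $\delta$, and the constant $6\Lambda^2+1$ is tuned exactly to this end. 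A minor subtlety is that $a_1$ and $a_3$ need not be simple eigenvalues, but this is handled by the standard device of testing against a fixed choice of eigenvectors at the critical point, in the same spirit as the treatment of $\lambda$ at the end of the proof of Theorem~\ref{thm:pic2}.
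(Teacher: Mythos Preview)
Your proof is correct and follows the same strategy as the paper: derive a barrier inequality $\tl f \ge \delta f^2$ for $f = (6\Lambda^2+1)a_1 - a_3$ (in the barrier sense, via Hamilton's eigenvalue ODEs) and then apply Corollary~\ref{C201} to $f^-$. The paper's algebra is slightly more streamlined---it obtains $\tl(a_3 - (6\Lambda^2+1)a_1) \le -a_3^2$ \emph{globally} from the single estimate $(a_1+a_2)^2 \le 4a_2^2 \le 4a_2a_3$, so the restriction to $\{f\le 0\}$ enters only at the final step $-a_3^2 \le -(a_3-(6\Lambda^2+1)a_1)^2$, avoiding your intermediate bounds on $a_2$ and the explicit coefficient check.
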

\begin{proof} 
From direct calculations,
\begin{align*}
    &\tl (a_3-(6\Lambda^2+1)a_1) \\
\le& \, a_3^2+b_3^2+2a_1a_2-(6\Lambda^2+1)(a_1^2+b_1^2+2a_2a_3)\\
    \le&\, a_3^2+b_3^2 -12\Lambda^2a_2a_3 \le a_3^2+b_3^2-3\Lambda^2(a_1+a_2)^2 \le-a_3^2.
\end{align*}
since $\max(a_3,b_3)\le \Lambda(a_1+a_2)$ and $(a_1+a_2)^2 \le 4a_2^2\le 4a_2a_3$. Moreover, if we have $(a_3-(6\Lambda^2+1)a_1)(x_0,t_0)\ge 0$ for some $(x_0,t_0) \in M \times (-\infty,0]$ , then we have at $(x_0,t_0)$,
\begin{align*}
    \tl (a_3-(6\Lambda^2+1)a_1)&\le -(a_3-(6\Lambda^2+1)a_1)^2.
\end{align*}
Now the result follows from Corollary \ref{C201}, by choosing $f=(-a_3+(6\Lambda^2+1)a_1)^-$. Similarly, the conclusion for $c_1$ and $c_3$ also holds.
\end{proof}

We continue to show the following. 

\begin{lem}\label{lem:402}
Let $(M^4,g(t))_{t \in (-\infty,0]}$ be a complete 4-dimensional ancient solution to the Ricci flow with uniformly \emph{PIC}.  Then we have $$\frac{b_3^2}{(a_1+a_2)(c_1+c_2)} \le \frac14$$ on $M \times (-\infty,0]$.
\end{lem}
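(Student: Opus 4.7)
The plan mirrors Lemmas \ref{lem:400} and \ref{lem:401}: choose an auxiliary $f$ whose nonnegativity is equivalent to the claim, derive a differential inequality $\tl f \ge \delta f^2$ in the barrier sense at points where $f \le 0$, and apply Corollary \ref{C201} to $f^-$. The natural candidate is
\[f := (a_1+a_2)(c_1+c_2) - 4 b_3^2.\]
Uniformly \emph{PIC} combined with Lemma \ref{lem:400} gives $a_1+a_2 > 0$ and $c_1+c_2 > 0$, so $f \ge 0$ is equivalent to the claimed pinching.

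The first step is to write down, via Uhlenbeck's trick, Hamilton's 4D evolution equations (see \cite{H}) for the self-dual and anti-self-dual blocks $A$, $B$, $C$. The second Bianchi identity yields $\operatorname{tr} B = 0$, hence $b_1 = 0$, which substantially simplifies the reaction terms. These produce barrier-sense lower bounds for $\tl(a_1+a_2)$ and $\tl(c_1+c_2)$ and an upper bound for $\tl(b_3^2)$, each a quadratic polynomial in $a_i$, $b_i$, $c_i$. Combining via the parabolic product rule $\tl(gh) = (\tl g) h + g (\tl h) - 2\na g \cdot \na h$, together with the identity $\tl(b_3^2) = 2 b_3 \tl b_3 - 2|\na b_3|^2$, produces an inequality of the form
\[\tl f \ge (\text{quartic reaction in eigenvalues}) - 2\na(a_1+a_2)\cdot\na(c_1+c_2) + 8|\na b_3|^2.\]
At a local minimum of $f^-$ we have $\na f = 0$, i.e., $(c_1+c_2)\na(a_1+a_2) + (a_1+a_2)\na(c_1+c_2) = 8 b_3 \na b_3$, which allows the indefinite cross-gradient term to be absorbed into the nonnegative term $8|\na b_3|^2$ via Cauchy--Schwarz, as in the previous lemmas.

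The central step, and the anticipated obstacle, is to bound the reaction part from below by $\delta(\Lambda) f^2$ at points with $f \le 0$. Lemmas \ref{lem:400} and \ref{lem:401} together with the uniformly \emph{PIC} bound force all $a_i$ (resp.\ $c_i$) to be nonnegative and comparable to $a_1+a_2 \sim R$ (resp.\ $c_1+c_2 \sim R$) up to constants depending only on $\Lambda$; hence $f = O(R^2)$, and one needs a $\delta(\Lambda) R^4$ lower bound on the reaction. In the regime $f \le 0$, where $b_3$ is near its maximal value $\tfrac12\sqrt{(a_1+a_2)(c_1+c_2)}$, the task is to verify that the cross contributions of the form $2 a_3 (a_1+a_2)(c_1+c_2)$ and $2 c_3 (c_1+c_2)(a_1+a_2)$ coming from the reactions of $\tl(a_1+a_2)$ and $\tl(c_1+c_2)$ strictly dominate four times the $\tl(b_3^2)$ reaction, after substituting $b_1 = 0$ and the curvature comparability. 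Once this algebraic inequality is established, Corollary \ref{C201} applied to $f^-$ yields $f \ge 0$, completing the proof.
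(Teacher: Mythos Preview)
Your plan has two concrete gaps, and the paper's proof takes a different route that sidesteps both.

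First, the claim ``the second Bianchi identity yields $\operatorname{tr} B = 0$, hence $b_1 = 0$'' is false. In Hamilton's self-dual/anti-self-dual decomposition the only Bianchi constraint on the blocks is $\operatorname{tr} A = \operatorname{tr} C$; the off-diagonal block $B$ (which encodes trace-free Ricci) is an unconstrained $3\times 3$ matrix. Even if one had $\operatorname{tr} B = 0$, the $b_i$ are \emph{singular values} of $B$ (eigenvalues of $\sqrt{BB^t}$), so no $b_i$ would be forced to vanish. Without $b_1=0$ the term $2b_1b_2$ in the upper barrier for $\tl b_3$ survives and your ``substantially simplified'' reaction disappears. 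Second, your gradient handling invokes $\na f = 0$ ``at a local minimum of $f^-$'', but Corollary~\ref{C201} (via Proposition~\ref{P201}) needs $\tl f \ge \delta f^2$ in the barrier sense \emph{pointwise}; its proof localizes by looking at the minimum of $\psi f$ for a cutoff $\psi$, where $\na f$ does not vanish. Lemmas~\ref{lem:400} and \ref{lem:401} face no such issue because their test quantities are linear in the eigenvalues; your $f=(a_1+a_2)(c_1+c_2)-4b_3^2$ is genuinely quadratic and the cross term $-2\na(a_1+a_2)\cdot\na(c_1+c_2)$ has no evident sign.

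The paper avoids both obstacles by arguing by contradiction: set $\gamma := \sup b_3/\sqrt{(a_1+a_2)(c_1+c_2)}$, assume $\gamma>\tfrac12$, and study $u := b_3 - \gamma\sqrt{(a_1+a_2)(c_1+c_2)}$. Taking the heat operator of the \emph{square root} produces an automatically nonnegative gradient contribution $G=\tfrac14|\na\log(a_1+a_2)-\na\log(c_1+c_2)|^2$, which kills the cross-gradient problem. One obtains $\tl u \le 0$ with equality only when $a_1=b_1=c_1=a_2=b_2=c_2=b_3$; combined with the hypothesis $\gamma>\tfrac12$ this forces $a_1+a_2=0$, impossible under uniformly PIC. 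The equality analysis thus yields a strict gap $\tl u \le -6\delta|\mathrm{Ric}|^2$, whence $\tl(u+\delta R)\le -\delta^{-1}(u+\delta R)^2$ wherever $u+\delta R\ge 0$, and Corollary~\ref{C201} gives $u+\delta R\le 0$ everywhere, contradicting the choice of $\gamma$ as the supremum. Note how the supremum hypothesis is exactly what manufactures the strict inequality; your direct approach has no analogous mechanism.
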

\begin{proof} If it is not true, then from the boundedness of this ratio, we have
$$\gamma:=\sup_{M \times (-\infty,0]} \frac{b_3}{\sqrt{(a_1+a_2)(c_1+c_2)}}>\frac12 .$$
From \cite{H}, we have the following evolution equations
\begin{align*}
    \tl b_3 \le& b_3(a_3+c_3)+2b_1b_2, \\
    \tl (a_1+a_2)\ge& a_1^2+a_2^2+2a_3(a_1+a_2)+b^2_1+b_2^2, \\
  \tl (c_1+c_2)\ge& c_1^2+c_2^2+2c_3(c_1+c_2)+b^2_1+b_2^2.
\end{align*}

From direct calculations, we derive
\begin{align*}
    \tl \sqrt{(a_1+a_2)(c_1+c_2)}\ge \sqrt{(a_1+a_2)(c_1+c_2)}(G+E)
\end{align*}
where
\begin{align*}
    G=\frac14 |\na \log(a_1+a_2)-\na \log(c_1+c_2)|^2 \ge 0
\end{align*}
and
\begin{align*}
    E&=\frac12 \left(\frac{a_1^2+a_2^2+b_1^2+b_2^2}{a_1+a_2}+\frac{c_1^2+c_2^2+b_1^2+b_2^2}{c_1+c_2}\right)+a_3+c_3\\
    &=\frac12 \left(\frac{(a_1-b_1)^2+(a_2-b_2)^2}{a_1+a_2}+\frac{(c_1-b_1)^2+(c_2-b_2)^2}{c_1+c_2}\right)+a_3+c_3+2b_1+\frac{a_2(b_2-b_1)}{a_1+a_2}+\frac{c_2(b_2-b_1)}{c_1+c_2}.
\end{align*}
If we set $u(x,t)=b_3-\gamma\sqrt{(a_1+a_2)(c_1+c_2)}$, then $u(x,t)\le0$ everywhere and it satisfies
\begin{align*}
    \tl u(x,t)&\le b_3(a_3+c_3)+2b_1b_2-\gamma\sqrt{(a_1+a_2)(c_1+c_2)}E\\
    &\le u(x,t)(a_3+c_3)+2b_1(b_2-\gamma\sqrt{(a_1+a_2)(c_1+c_2)})-\gamma\sqrt{(a_1+a_2)(c_1+c_2)}K
\end{align*}
where 
$$
K=\frac12 \left(\frac{(a_1-b_1)^2+(a_2-b_2)^2}{a_1+a_2}+\frac{(c_1-b_1)^2+(c_2-b_2)^2}{c_1+c_2}\right)+\frac{a_2(b_2-b_1)}{a_1+a_2}+\frac{c_2(b_2-b_1)}{c_1+c_2}.
$$
From the expression, it is clear that $\tl u(x,t) \le 0$ everywhere. Moreover, it can be zero only when $K=u(x,t)=0$ and $b_2=b_3$. But this implies that $a_1=b_1=c_1=a_2=b_2=c_2=b_3$ and 
$$0=u(x,t)=b_3-\gamma\sqrt{(a_1+a_2)(c_1+c_2)}=(1-2\gamma)b_3$$
Since $\gamma>\frac12$ from the assumption, it can only happen when all terms are equal to 0, which is impossible since $a_1+a_2>0$. Moreover, it follows from the uniformly PIC and Lemma \ref{lem:401} that $\tl u(x,t)\le-6\delta|\text{Ric}|^2$ for a small constant $\delta=\delta(\Lambda,\gamma)>0$. With this fact, we obtain
$$
\tl (u+\delta R)\le-4\delta |\text{Ric}|^2\le -\delta R^2.
$$
Therefore, we have
$$
\tl (u+\delta R)\le -\delta^{-1} (u+\delta R)^2
$$
wherever $u+\delta R \ge 0$. From Corollary \ref{C201}, it implies that $u(x,t)+\delta R(x,t)\le0$ everywhere. From this, we have
$$
0\ge \frac{b_3}{\sqrt{(a_1+a_2)(c_1+c_2)}}-\gamma+\frac{\delta R}{\sqrt{(a_1+a_2)(c_1+c_2)}}\ge \frac{b_3}{\sqrt{(a_1+a_2)(c_1+c_2)}}-\gamma+\delta'
$$
for a small constant $\delta'=\delta'(\Lambda,\gamma)>0$. However, it implies that $\frac{b_3}{\sqrt{(a_1+a_2)(c_1+c_2)}}\le \gamma-\delta'$ everywhere, which contradicts the choice of $\gamma$. So we conclude that $\gamma \le \frac12$, which completes the proof. \end{proof}

One can check that Lemma \ref{lem:402} actually implies $(M,g(t))_{t \in (-\infty,0]}$ has weakly PIC$_1$. However, in 4-dimensional case, we can get a stronger result as follows.
\begin{lem} \label{lem:403}
Let $(M^4,g(t))_{t \in (-\infty,0]}$ be a 4-dimensional, complete ancient solution to the Ricci flow with uniformly \emph{PIC}. Then it has nonnegative curvature operator.
\end{lem}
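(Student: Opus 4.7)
The plan is to apply Corollary~\ref{C201} to a scalar pinching function measuring the extent to which the curvature operator $\Rm = \begin{pmatrix} A & B \\ B^T & C \end{pmatrix}$ fails to be nonnegative. Since Lemma~\ref{lem:400} already yields $a_1, c_1 \ge 0$, a direct quadratic-form estimate shows that $\Rm \ge 0$ is guaranteed once $b_3^2 \le a_1 c_1$ holds pointwise: for any $(u,v) \in \R^3 \oplus \R^3$, $u^T A u + 2 u^T B v + v^T C v \ge a_1 |u|^2 - 2 b_3 |u| |v| + c_1 |v|^2$, and the right-hand side, viewed as a quadratic form in $(|u|,|v|)$, is nonnegative exactly when $b_3^2 \le a_1 c_1$. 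The task thus reduces to proving this pointwise inequality on $M \times (-\infty, 0]$.

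The strategy mirrors the proof of Lemma~\ref{lem:402}, now with the tighter pinching ratio $b_3/\sqrt{a_1 c_1}$. Set $\gamma := \sup_{M \times (-\infty,0]} b_3 / \sqrt{a_1 c_1}$, which is finite since Lemmas~\ref{lem:401} and~\ref{lem:402} combine to give $b_3 \le K \sqrt{a_1 c_1}$ with $K = 6\Lambda^2+1$; the target is $\gamma \le 1$. Assuming for contradiction $\gamma > 1$, set $u = b_3 - \gamma \sqrt{a_1 c_1} \le 0$, and compute $\tl u$ using the Hamilton evolution equations $\tl b_3 \le b_3(a_3+c_3) + 2 b_1 b_2$, $\tl a_1 \ge a_1^2 + b_1^2 + 2 a_2 a_3$, $\tl c_1 \ge c_1^2 + b_1^2 + 2 c_2 c_3$ recorded in the proof of Lemma~\ref{lem:402}, together with the chain rule for the square root. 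Decompose the result into a nonpositive main term $u(a_3+c_3)$, a nonpositive cross-term $2 b_1 (b_2 - \gamma\sqrt{a_1 c_1})$, and a residual $-\gamma\sqrt{a_1 c_1}\, K'$ with $K' \ge 0$, where $K'$ collects the positive sum-of-squares part of the $E$-type expression plus a nonnegative gradient contribution $G$ analogous to that in Lemma~\ref{lem:402}.

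The residual $K'$ is strictly positive unless all the underlying squares vanish; together with $\gamma > 1$ this would force $a_1 = c_1 = b_3 = 0$, a locus where $\Rm \ge 0$ already holds trivially. Using uniformly PIC and Lemma~\ref{lem:401}, upgrade this strict negativity to a quantitative bound $\tl u \le -6\delta |\text{Ric}|^2$ for some $\delta = \delta(\Lambda,\gamma) > 0$. Since $\tl R = 2|\text{Ric}|^2 \ge R^2/2$, the shifted function $u + \delta R$ satisfies $\tl(u + \delta R) \le -\delta R^2 \le -\delta^{-1}(u + \delta R)^2$ wherever $u + \delta R \ge 0$. Applying Corollary~\ref{C201} to $-(u + \delta R)$, and handling non-smooth points via barrier functions exactly as in Lemma~\ref{lem:402}, yields $u + \delta R \le 0$. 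Thus $b_3 / \sqrt{a_1 c_1} \le \gamma - \delta'$ for some $\delta' = \delta'(\Lambda,\gamma) > 0$, contradicting the maximality of $\gamma$. Hence $\gamma \le 1$ and $\Rm \ge 0$ on $M \times (-\infty, 0]$.

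The main obstacle will be the algebraic bookkeeping: with $\gamma$ only slightly greater than $1$, the usable slack is thin, so one must carefully extract strict negativity from the $E$ and $G$ components of $\tl \sqrt{a_1 c_1}$. These are less symmetric than their $\sqrt{(a_1+a_2)(c_1+c_2)}$-counterparts in Lemma~\ref{lem:402}, since they do not average over the pairs $(a_1,a_2)$ or $(c_1,c_2)$. Lemma~\ref{lem:401} will be essential to compare cross-products like $a_2 a_3$ with $a_1^2$ (and $c_2 c_3$ with $c_1^2$), providing the margin needed to force the $|\text{Ric}|^2$ lower bound. Degenerate points where $a_1 c_1 = 0$ must be handled by approximating $\sqrt{a_1 c_1}$ with $\sqrt{(a_1+\epsilon)(c_1+\epsilon)}$ and sending $\epsilon \to 0$, which fits naturally into the barrier framework of Corollary~\ref{C201}.
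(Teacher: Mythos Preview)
Your approach is essentially the paper's: define $\gamma=\sup b_3/\sqrt{a_1c_1}$, assume $\gamma>1$, show $u=b_3-\gamma\sqrt{a_1c_1}$ satisfies $\tl u\le -6\delta|\text{Ric}|^2$, shift by $\delta R$, and apply Corollary~\ref{C201}; the quadratic-form argument for $\Rm\ge0$ from $b_3^2\le a_1c_1$ also matches.

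However, you gloss over the one genuinely delicate step. When the squares in your residual $K'$ vanish, one obtains $a_1=a_2=b_1=c_1=c_2$ and, from the cross-term together with $u=0$, $b_2=b_3=\gamma\sqrt{a_1c_1}=\gamma a_1$. This configuration with $a_1>0$ and $\gamma>1$ is \emph{not} excluded by Lemma~\ref{lem:401}: that lemma only bounds $a_3$ and $c_3$ from above by multiples of $a_1$ and $c_1$, which is compatible with the equality profile. Your suggestion that Lemma~\ref{lem:401} will ``compare cross-products like $a_2a_3$ with $a_1^2$'' to supply the margin does not work here, since in the equality case $a_2a_3=a_1a_3$ and the lemma gives an \emph{upper} bound on this, not extra positivity. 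The paper closes this gap using Lemma~\ref{lem:402}: in the equality configuration it yields $b_3\le\tfrac12\sqrt{(a_1+a_2)(c_1+c_2)}=\sqrt{a_1c_1}=a_1=b_1$, hence $b_1=b_2=b_3$, and then $a_1=\gamma a_1$ forces $a_1=0$. You invoke Lemma~\ref{lem:402} only for finiteness of $\gamma$; its decisive role is precisely this equality analysis, and your proposal should identify it as such.
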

\begin{proof} To this end, we will show that
$$\frac{b_3^2}{a_1c_1} \le 1$$ for all $(x,t) \in M \times (-\infty,0]$. If it is not true, then we have 
$$\eta:=\sup_{M \times (-\infty,0]}\frac{b_3}{\sqrt{a_1c_1}}>1.$$
Notice that $\eta$ is finite by Lemma \ref{lem:401}. If we set $v(x,t)=b_3-\eta \sqrt{a_1c_1}$, then we get $v(x,t) \le 0$ everywhere and we have the following evolution equation.
\begin{align*}\tl v(x,t) &\le b_3(a_3+c_3)+2b_1b_2-\eta \sqrt{a_1c_1}E
\end{align*}
where
\begin{align*}
    E&=\frac12\left(\frac{a_1^2+b_1^2+2a_2a_3}{a_1}+\frac{c_1^2+b_1^2+2c_2c_3}{c_1}\right)\\
    &=\frac12 \left(\frac{(a_1-b_1)^2+2a_3(a_2-a_1)}{a_1}+\frac{(c_1-b_1)^2+2c_3(c_2-c_1)}{c_1}\right)+2b_1+a_3+c_3\\
    &=:F+2b_1+a_3+c_3.
\end{align*}
Therefore, we obtain
$$\tl v(x,t) \le v(x,t)(a_3+c_3)+2b_1(b_2-\eta\sqrt{a_1c_1})-\eta\sqrt{a_1c_1}F\le 0$$ since $F\ge0$ and $v(x,t) \le 0$. Moreover, the equality case is obtained if and only if $a_1=b_1=c_1=a_2=c_2$, $b_2=b_3$ and $b_3=\eta\sqrt{a_1c_1}$. Since we already know that $\frac{b_3}{\sqrt{(a_1+a_2)(c_1+c_2)}}\le \frac12$ from Lemma \ref{lem:402}, we get $b_3\le \frac12 \sqrt{(a_1+a_2)(c_1+c_2)}=b_1$. Therefore the equality case is obtained when $a_1=b_1=c_1=a_2=c_2=b_2=b_3$ and $b_3=\eta\sqrt{a_1c_1}>\sqrt{a_1c_1}=b_3$. It shows that $a_1=c_1=0$ in the equality case, which contradicts to the curvature condition. As before, $\tl v(x,t)\le -6\delta|\text{Ric}|^2$ for a small constant $\delta=\delta(\Lambda, \eta)>0$. From the similar argument used in Lemma \ref{lem:402}, we obtain a contradiction. 

Now we show that $\frac{b_3}{\sqrt{a_1c_1}}\le 1$ implies the nonnegativity of the curvature operator. To do so, let $\varphi_i^\pm \in \bigwedge^2_{\pm}$ $(i=1,2,3)$ be bases of a self-dual vector space $\bigwedge_+$ and an anti-self-dual vector space $\bigwedge_-$ that $\text{Rm}|_{\bigwedge^2_\pm}$ is diagonalized, respectively. Let $\varphi=\sum_{i=1}^3p^i\varphi_i^++\sum_{j=1}^3q^j\varphi_j^-$. From the definition, we have $\text{Rm}(\varphi_i^+,\varphi_i^+)\ge a_1$ and $\text{Rm}(\varphi_i^-,\varphi_i^-)\ge c_1$ and $\text{Rm}(\varphi_i^+,\varphi_j^-)\ge -b_3$ for all $i,j=1,2,3$. Therefore we have
\begin{align*}
    \text{Rm}(\varphi,\varphi)&\ge 3a_1\sum_{i=1}^3(p^i)^2+3c_1\sum_{j=1}^3(q^j)^2-6b_3\sum_{i,j=1}^3|p^i||q^j|\\
    &\ge 3a_1\sum_{i=1}^3(p^i)^2+3c_1\sum_{j=1}^3(q^j)^2-6\sqrt{a_1c_1}\sum_{i,j=1}^3|p^i||q^j|\\
    &=3\sum_{i,j}^3(|p^i|\sqrt{a_1}-|q_j|\sqrt{c_1})^2 \ge 0.
\end{align*}
Therefore, the proof is complete. 
\end{proof}

In sum,  let $(M^4,g(t))_{t \in (-\infty,0]}$ be a 4-dimensional, complete, noncompact ancient solution to the Ricci flow with uniformly PIC. Then it follows from Lemma \ref{lem:401} and Lemma \ref{lem:403} that that there exists a constant $K=K(\Lambda)>0$ such that 
\begin{align}
a_3 \le Ka_1,~c_3\le Kc_1,~b_3^2\le a_1c_1. \label{eq:rest}
\end{align}
In other words, it satisfies the restricted isotropic curvature pinching condition in \cite{CZ}. 
\begin{rem}
We have learned that Brendle and Naff  \emph{\cite[Proposition A.2]{BN}} recently proved the same result in a different method by constructing a family of continuous cones, provided that the curvature is uniformly bounded for the ancient solution. Notice that by Theorem \ref{thm:cones} the curvature assumption is not necessary.
\end{rem}

\subsection{K\"ahler Ricci flow with weakly PIC}

In this subsection, we consider a complete (complex) 2-dimensional ancient solution to the \ka Ricci flow with weakly PIC.

For a \ka surface, we can choose a positively oriented orthonormal basis as $\{e_1,Je_1,e_2,Je_2\}$ where $J$ is a complex structure. They generate self-dual and anti-self-dual  two forms as
we choose a basis for $\bigwedge_+$ and $\bigwedge_-$ as 
\begin{align*}\varphi_1^\pm=e_1\wedge Je_1\pm e_2 \wedge Je_2\\ \varphi_2^\pm=e_1\wedge e_2\pm Je_2 \wedge Je_1\\ \varphi_3^\pm=e_1\wedge Je_2\pm Je_1 \wedge e_2\end{align*}
Using this basis and \ka condition together with Bianchi identity, one can represent the curvature operator as a $6 \times 6$ matrix
$$
\begin{pmatrix} \frac{R}{2} & 0 & 0 & {\rho}_1 & {\rho}_2 & {\rho}_3\\ 0&0&0&0 & 0 & 0 \\ 0&0&0&0 & 0 & 0 \\ {\rho}_1 & 0 & 0 &  &  &  \\ {\rho}_2 & 0 & 0 &  & C &  \\ {\rho}_3 & 0 & 0 &  &  & 
\end{pmatrix}
$$
with a $3 \times 3$ matrix $C$. Therefore, a \ka surface has weakly PIC if and only if $c_1+c_2 \ge 0$, where, as before, $c_1\le c_2\le c_3$ are eigenvalues of $C$.

Now we prove the main result of the subsection, see also \cite[Lemma $3.1$]{CTZ}.

\begin{lem} \label{lem:405}
Let $(M,g(t))_{t \in (-\infty,0]}$ be a complete, complex 2-dimensional ancient solution to K\"{a}hler-Ricci flow with weakly \emph{PIC}. Then it has nonnegative curvature operator.
\end{lem}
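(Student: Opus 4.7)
The plan is to parallel the maximum-principle blueprint of Lemmas \ref{lem:400}--\ref{lem:403}, adapted to the K\"ahler setting. The K\"ahler condition together with the block decomposition above forces
$$
a_1 = a_2 = 0,\quad a_3 = R/2,\quad b_1 = b_2 = 0,\quad b_3 = |\rho|,
$$
and since $(M, g(t))$ is a complete $4$-dimensional ancient solution with weakly PIC, Lemma \ref{lem:400} yields $c_1 \ge 0$, while \cite{C} gives $R \ge 0$. Because $B$ maps $\bigwedge_-$ into $\mathrm{span}(\omega)$ with $\omega = \varphi_1^+$, the curvature operator preserves the $4$-dimensional subspace $\mathrm{span}(\omega) \oplus \bigwedge_-$ and vanishes on its orthogonal complement inside $\bigwedge_+$. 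Consequently $\Rm \succeq 0$ is equivalent to positive semidefiniteness of
$$
M = \begin{pmatrix} R/2 & \rho^T \\ \rho & C \end{pmatrix},
$$
which, using $C \succeq 0$ and the Schur complement, reduces to the matrix inequality $\rho \rho^T \preceq (R/2)\,C$.

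To establish this, I would argue by contradiction following the supremum template of Lemma \ref{lem:402}. Assuming it fails somewhere, set
$$
\eta := \sup_{M \times (-\infty,0]}\ \sup_{|v|=1}\ \frac{(\rho \cdot v)^2}{(R/2)\,v^T C v},
$$
which is finite by the a priori bounds on $|\mathrm{Ric}|$ and $|\rho|$ coming from weakly PIC, and assume $\eta > 1$. At a spacetime point approaching the supremum, pick the maximizing unit vector $v_0$ and freeze it to form the smooth auxiliary function
$$
u(x,t) := (\rho(x,t)\cdot v_0)^2 - \eta\,(R(x,t)/2)\,v_0^T C(x,t) v_0 \le 0,
$$
which serves as a smooth lower barrier for the supremum in $v$. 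Using the Hamilton equation $\tl \Rm = Q(\Rm)$ translated into the block form $(A,B,C)$ of \cite{H}, a direct computation of $\tl u$ combined with weakly PIC ($c_1+c_2 \ge 0$), the K\"ahler identities (notably $BB^T = |\rho|^2 e_1 e_1^T$ and $\mathrm{tr}(A) = \mathrm{tr}(C)$), and the strict gap $\eta > 1$ should yield the pointwise bound
$$
\tl u \le -c\,|\mathrm{Ric}|^2
$$
for some constant $c = c(\eta) > 0$.

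Combining this with $\tl R = 2|\mathrm{Ric}|^2$, the function $h := u + \delta R$ satisfies $\tl h \le -\delta^{-1} h^2$ in the barrier sense wherever $h \ge 0$, for $\delta = \delta(\eta) > 0$ sufficiently small. Applying Corollary \ref{C201} to the positive part $(h)^+$ forces $h \le 0$ throughout $M \times (-\infty, 0]$, i.e.\ $u \le -\delta R$ globally, which contradicts the definition of $\eta$. Hence $\eta \le 1$, so $\rho\rho^T \preceq (R/2)\,C$ and $\Rm \succeq 0$.

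The main obstacle is the pointwise inequality $\tl u \le -c|\mathrm{Ric}|^2$. As in Lemmas \ref{lem:402}--\ref{lem:403}, extracting a definite negative quadratic from the Hamilton reaction term $Q(\Rm)$ requires a careful algebraic unwinding in block form, using the strict inequality $\eta > 1$ together with the special K\"ahler structure (rank-one $B$ and the trace balance $\mathrm{tr}(A) = \mathrm{tr}(C)$) to absorb indefinite cross terms between $\rho$, $C$, and $R$. The threshold $\eta = 1$ is precisely where the sign change occurs and is dictated by the Schur-complement geometry of $M$, mirroring the thresholds $\gamma = 1/2$ and $\eta = 1$ appearing in Lemmas \ref{lem:402} and \ref{lem:403}.
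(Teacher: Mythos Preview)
Your proposal has two genuine gaps, and it also misses the one-line simplification that the K\"ahler structure provides.

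\textbf{Gap 1: finiteness of $\eta$.} You assert that $\eta$ is finite ``by the a priori bounds on $|\mathrm{Ric}|$ and $|\rho|$ coming from weakly PIC,'' but these bounds control only the numerator $(\rho\cdot v)^2 \le CR^2$. The denominator $(R/2)\,v^T C v$ is bounded below only by $(R/2)\,c_1$, and weakly PIC gives $c_1\ge 0$, not $c_1\ge \epsilon R$. In Lemmas \ref{lem:402}--\ref{lem:403} the analogous suprema are finite precisely because \emph{uniformly} PIC supplies the pinching $a_3\le \Lambda(a_1+a_2)$, $c_3\le \Lambda(c_1+c_2)$ (Lemma \ref{lem:401}); you have no such input here. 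Without an a priori lower bound on $c_1/R$, your ratio can blow up and the supremum argument never gets started.

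\textbf{Gap 2: the reaction inequality.} The inequality $\tl u \le -c\,|\mathrm{Ric}|^2$ is asserted (``should yield'') but not computed. Since you also lack the uniform pinching that drove the absorption of cross terms in Lemmas \ref{lem:402}--\ref{lem:403}, it is not clear that this computation goes through at all in the merely weakly PIC K\"ahler setting.

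\textbf{The paper's approach.} The K\"ahler block form makes the supremum machinery unnecessary. Because $a_1=a_2=0$ and $b_1=b_2=0$, one has $A^\sharp=0$ and $B^\sharp=0$, so Hamilton's ODE reads
\[
\tl \Rm \;=\; \Rm^2 + 2\begin{pmatrix} 0 & 0 \\ 0 & C^\sharp \end{pmatrix}.
\]
Since Lemma \ref{lem:400} already gives $c_1\ge 0$, the eigenvalues $c_1c_2,\,c_1c_3,\,c_2c_3$ of $C^\sharp$ are all nonnegative, so $\Rm^\sharp\ge 0$. Hence the smallest eigenvalue $\lambda$ of $\Rm$ satisfies $\tl\lambda \ge \lambda^2$ in the barrier sense, and Corollary \ref{C201} gives $\lambda\ge 0$ directly. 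No Schur complement, no supremum, no contradiction argument is needed.
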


\begin{proof}
From Lemma \ref{lem:400}, we obtain $c_1 \ge 0$ on $M \times (-\infty,0]$. Moreover, we have the evolution equation
\begin{align}
\tl\Rm=\Rm^2+\Rm^\sharp=\Rm^2+2\begin{pmatrix} 0 & 0 \\ 0 & C^\sharp\end{pmatrix},\label{eq:411}
\end{align}
where $C^\sharp$, which is the adjoint matrix of $C$, has eigenvalues $\{c_1c_2,c_2c_3,c_3c_1\}$. If we set $\lambda$ to be the smallest eigenvalue of $\Rm$, then it follows from \eqref{eq:411} that
\begin{align*}
\tl \lambda \ge \lambda^2.
\end{align*}
Therefore, we conclude from Corollary \ref{C201} that $\Rm \ge 0$ on $M \times (-\infty,0]$.
\end{proof}

\section{Proof of Theorem \ref{thm:main1}}

First, we recall the following definition, see \cite[Definition $1.1$]{BN}.

\begin{defn}\label{def:kappa} \emph{[}$\kappa$-solutions to the Ricci flow\emph{]}
For $n \ge 4$, a complete noncompact ancient solution $(M^n,g(t))_{t  \in (-\infty,0]}$ to the Ricci flow is called a $\kappa$-solution if it satisfies (1) uniformly \emph{PIC}, (2) weakly \emph{PIC}$_2$, (3) $\kappa$-noncollapsed and (4) uniformly bounded curvature.
\end{defn}

All $\kappa$-solutions are completely classified in \cite{BN}.

\begin{thm}[Corollary $1.6$ of \cite{BN}] \label{thm:kappa}
Any $\kappa$-solution $(M^n,g(t))_{t  \in (-\infty,0]}$ is isometric to either a family of shrinking cylinders (or a quotient thereof ) or to the Bryant soliton.
\end{thm}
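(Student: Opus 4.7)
The plan is to follow the asymptotic-analysis plus rotational-symmetry strategy developed by Brendle for the three-dimensional case \cite{B0} and adapted to higher dimensions by Brendle-Naff. Since the solution is a $\kappa$-solution in the sense of Definition \ref{def:kappa}, the weakly $\mathrm{PIC}_2$ hypothesis furnishes Hamilton's matrix Harnack inequality and a compactness theorem for parabolic rescalings, so asymptotic limits at both past-temporal and spatial infinity exist and inherit the same curvature conditions. The proof then splits into two stages: first identify the asymptotic model as a round cylinder, then promote this asymptotic symmetry to exact rotational symmetry of the full solution, after which a direct classification of rotationally symmetric ancient $\kappa$-solutions finishes the proof.

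For the asymptotic analysis, I would blow down the solution at a sequence of times $t_k \to -\infty$ with a fixed basepoint; the rescalings converge subsequentially to a shrinking gradient Ricci soliton which, by the compactness theory available under bounded curvature and $\kappa$-noncollapsing, is nontrivial. Under uniformly $\mathrm{PIC}$ together with weakly $\mathrm{PIC}_2$, the only such shrinker is $\mathbb{R}\times S^{n-1}$ (or a finite quotient). The same curvature package, combined with $\kappa$-noncollapsing, also forces the geometry at spatial infinity on each time slice to look like a thin round cylinder. This yields the expected dichotomy: either the solution is globally the shrinking cylinder (or a quotient thereof) and we are done, or the scalar curvature attains a spatial maximum at each time and we must identify the solution as the Bryant soliton.

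The heart of the argument is then to promote these cylindrical asymptotics to exact rotational symmetry via the neck improvement theorem \cite[Theorem 4.8]{BN} (cf.\ \cite[Theorem 8.5]{B0}): any neck in the solution that is $\varepsilon$-symmetric, in the sense of admitting nearly-Killing $SO(n-1)$ rotation fields, is automatically $\varepsilon/2$-symmetric at a slightly smaller parabolic scale. Iterating this improvement across all scales, patching the local symmetries by a unique-continuation argument, and passing to a limit produces a global $SO(n-1)$ action; reducing the Ricci flow to a warped-product ODE on the one-dimensional orbit space then forces the solution to be the Bryant soliton, as in \cite[Theorem 1.1]{B0} and \cite[Theorem 1.5]{LZ18}. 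The main obstacle is the neck improvement step itself: constructing approximate Killing fields at the next scale reduces to a parabolic Jacobi-type PDE whose kernel on the exact cylinder must be understood explicitly, and obtaining the strict improvement constant via a contradiction-and-compactness argument requires very sharp control of the harmonic modes of the linearization on the cylinder. This rigidity analysis is the genuinely non-routine analytic core of \cite{BN}, and is what we would need to invoke wholesale rather than reprove.
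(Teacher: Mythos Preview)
The paper does not supply its own proof of this theorem: it is stated as a direct quotation of \cite[Corollary~1.6]{BN} and used as a black box. The only commentary the paper offers is the sentence in the introduction noting that the key idea is to prove rotational symmetry via the neck improvement theorem \cite[Theorem~4.8]{BN}, \cite[Theorem~8.5]{B0}, and then to classify the rotationally symmetric ancient solutions using \cite[Theorem~1.1]{B0}, \cite[Theorem~1.5]{LZ18}. Your sketch is a faithful expansion of exactly that outline, so there is nothing to compare against and no discrepancy to flag.

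One minor caveat: since the paper treats this result as an external citation rather than something to be reproved, a ``proof'' here would more appropriately be a one-line reference to \cite{BN} rather than a multi-paragraph sketch. Your outline is accurate as a description of the Brendle--Naff strategy, but it is not something the present paper undertakes.
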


Based on the $\kappa$-solutions, we prove a canonical neighborhood theorem on Ricci flows with possibly unbounded curvature. We first recall the following result from \cite[Corollary $11.6$]{P}.

\begin{lem}[Perelman]\label{lem:pe}
For every $w>0$, there exist constants $C = C(w)<\infty$ and $\tau=\tau(w)>0$ with the following properties.
Let $(M^n,g(t))_{t\in [-T,0]}$ be a (possibly incomplete) Ricci flow solution with weakly \emph{PIC}$_2$. Suppose $B_{g(0)}(x_0,r_0)$ is compactly contained in $M$ such that $\emph{Vol}\,B_{g(0)}(x_0,r_0) \ge w r_0^n$ and $T\ge 2  \tau r_0^2$. Then
\begin{align*}
R(x,t) \le C r_0^{-2}
\end{align*}
for $(x,t) \in B_{g(0)}(x_0,r_0/4) \times [-\tau r_0^2,0]$.
\end{lem}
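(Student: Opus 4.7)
My plan follows Perelman's point-picking contradiction strategy. After rescaling to set $r_0 = 1$, I argue by contradiction: suppose for some fixed $w>0$ there exist a sequence of Ricci flows $(M_k, g_k(t))_{t \in [-T_k,0]}$ with weakly \emph{PIC}$_2$ and $\mathrm{Vol}_{g_k(0)} B(x_{0,k},1) \ge w$, together with bad points $(y_k, s_k) \in B_{g_k(0)}(x_{0,k},1/4) \times [-\tau_k, 0]$ satisfying $R(y_k,s_k) > C_k$, where $C_k \to \infty$ and $\tau_k \to 0$.

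I would first invoke Perelman's point-picking lemma to replace $(y_k,s_k)$ by nearby $(\bar y_k, \bar t_k)$ whose curvature $\bar Q_k := R(\bar y_k,\bar t_k)$ tends to infinity, in such a way that $R \le 2\bar Q_k$ on a backward parabolic neighborhood whose size in the parabolically rescaled metric $\tilde g_k(t) := \bar Q_k\, g_k(\bar t_k + t/\bar Q_k)$ grows unboundedly with $k$. The crucial step is uniform volume non-collapsing after rescaling: weakly \emph{PIC}$_2$ implies nonnegative sectional curvature (set $\lambda = \mu = 0$ in Definition~\ref{defn:cur}(v)), so Bishop--Gromov volume comparison is available at every time slice; together with $\mathrm{Ric} \ge 0$ forcing $g_k(t)$ to be non-increasing in $t$, and with the parabolic curvature bound from point-picking, one transfers the initial-time volume estimate at $x_{0,k}$ to a uniform lower bound $\mathrm{Vol}_{\tilde g_k(0)} B(\bar y_k, 1) \ge \kappa$. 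Hamilton's Cheeger--Gromov compactness theorem then produces a subsequential smooth pointed limit $(M_\infty, g_\infty(t), y_\infty)_{t \le 0}$ that is a complete noncompact ancient Ricci flow, $\kappa$-noncollapsed, weakly \emph{PIC}$_2$, with bounded curvature on compact time intervals, and $R(y_\infty, 0) = 1$.

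The final contradiction is extracted from the asymptotic geometry of $M_\infty$. In the rescaled metric the ball $B_{g_k(0)}(x_{0,k}, 1)$ fits inside $B_{\tilde g_k(0)}(\bar y_k, C_0 \sqrt{\bar Q_k})$ and carries volume at least $w \bar Q_k^{n/2}$, so $(M_\infty, g_\infty(0))$ has strictly positive asymptotic volume ratio. On the other hand, a complete noncompact, nontrivial, $\kappa$-noncollapsed ancient Ricci flow with weakly \emph{PIC}$_2$ and bounded curvature on compact time intervals cannot have positive asymptotic volume ratio: Hamilton's differential Harnack shows $R$ is monotone nondecreasing in $t$ and hence uniformly bounded, and Perelman's reduced-volume monotonicity produces a nontrivial shrinking asymptotic soliton whose existence forces $\mathrm{AVR} = 0$. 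This contradicts our lower bound and closes the argument. The main obstacle I expect is the volume-transfer step: relating the given time-$0$ volume bound on a ball around $x_{0,k}$ to volume non-collapsing of a unit ball in the rescaled metric around the shifted spacetime point $(\bar y_k, \bar t_k)$ requires delicately combining Bishop--Gromov, metric monotonicity under $\mathrm{Ric}\ge 0$, and the local parabolic curvature bound from point-picking.
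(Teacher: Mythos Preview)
The paper's own proof is two lines: it refers to Perelman's proof of Corollary~11.6 in \cite{P} and notes that one should substitute \cite[Lemma~4.5]{CW} (a curvature bound from volume for Ricci flows with nonnegative complex sectional curvature, i.e.\ weakly PIC$_2$) for Perelman's Proposition~11.4. So the authors are not reproving anything; they are invoking an existing black box.

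Your proposal instead tries to run the contradiction argument from scratch, in the style of how Perelman proves his Proposition~11.4. The outline is correct in spirit, and the endgame contradiction (nonflat $\kappa$-noncollapsed ancient PIC$_2$ solution has vanishing asymptotic volume ratio) is fine. But the gap you flag at the end is real and is not resolved by the ingredients you list.

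Concretely: after standard backward-parabolic point-picking you have $R\le 2\bar Q_k$ only on a parabolic neighborhood of $(\bar y_k,\bar t_k)$ extending \emph{backward} in time, while the volume hypothesis lives at the final time $0\ge\bar t_k$. You assert that $B_{g_k(0)}(x_{0,k},1)$ sits inside $B_{\tilde g_k(0)}(\bar y_k,C_0\sqrt{\bar Q_k})=B_{g_k(\bar t_k)}(\bar y_k,C_0)$, but this requires a uniform upper bound on $g_k(\bar t_k)$-distances on that set. Since $\mathrm{Ric}\ge 0$ gives $g_k(\bar t_k)\ge g_k(0)$, distances at time $\bar t_k$ are \emph{larger} than at time $0$, with no a~priori upper control unless you already have a curvature bound on the whole set over $[\bar t_k,0]$ --- precisely what is missing. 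For the same reason the noncollapsing bound $\mathrm{Vol}_{\tilde g_k(0)}B(\bar y_k,1)\ge\kappa$ does not follow from ``Bishop--Gromov plus metric monotonicity plus point-picking'' as stated: Bishop--Gromov at time $\bar t_k$ gives $\mathrm{Vol}\,B_{g_k(\bar t_k)}(\bar y_k,r)\ge (w/R_k^n)r^n$ where $R_k$ is the $g_k(\bar t_k)$-radius needed to swallow $B_{g_k(0)}(x_{0,k},1)$, and $R_k$ may diverge. The standard remedy is a \emph{different}, pseudolocality-style point selection (choosing bad points with $R(x,t)\,|t|$ large, so that $\bar Q_k|\bar t_k|$ is bounded and the forward interval $[\bar t_k,0]$ has controlled rescaled length), which then permits the volume transfer with bounded distortion. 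That is exactly what is packaged inside Perelman~11.4 and \cite[Lemma~4.5]{CW}, which is why the paper can afford a one-sentence proof.
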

\begin{proof}
The proof is almost identical with the proof of \cite[Corollary $11.6$]{P}. The only difference is we use \cite[Lemma $4.5$]{CW} instead of \cite[Proposition $11.4$]{P}.
\end{proof}

\begin{thm} \label{thm:cano}
Let $(M^n,g(t))_{t  \in [0,2]}$ be a complete noncompact $\kappa$-noncollapsed Ricci flow solution with uniformly \emph{PIC} and weakly \emph{PIC}$_2$. For any $\ep>0$, there exists a small number $\bar r>0$ satisfying the following property.

Suppose $(\bar x, \bar t) \in M \times [1,2]$ and $R(\bar x, \bar t)=r^{-2} \ge {\bar r}^{-2}$, then after rescaling the metric by the factor $r^{-2}$, the parabolic neighborhood $B_{g(\bar{t})}(\bar{x},\ep^{-1} r) \times [\bar{t}-\ep^{-1} r^2,\bar{t}]$ is $\ep$-close in $C^{[\ep^{-1}]}$-topology to a $\kappa$-solution.
\end{thm}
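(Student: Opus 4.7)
The plan is to argue by contradiction, following Perelman's point-picking and compactness strategy adapted to the PIC setting. Suppose the conclusion fails: for some fixed $\ep>0$ there exist sequences $\bar r_k \searrow 0$ and spacetime points $(x_k, t_k) \in M \times [1,2]$ with $R(x_k, t_k) = r_k^{-2} \ge \bar r_k^{-2}$ such that, after rescaling by $r_k^{-2}$, the parabolic neighborhood around $(x_k, t_k)$ is not $\ep$-close in $C^{[\ep^{-1}]}$-topology to any $\kappa$-solution. I will modify this sequence by point-selection, rescale, extract a Cheeger-Gromov limit, identify the limit as a $\kappa$-solution in the sense of Definition~\ref{def:kappa}, and derive a contradiction from the resulting convergence.

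The first step will be a Perelman-type iterative point-selection. I would replace $(x_k, t_k)$ by a nearby spacetime point, still denoted $(x_k, t_k)$, for which $Q_k := R(x_k, t_k) \ge \bar r_k^{-2} \to \infty$ and in addition $R(x,t) \le 4 Q_k$ on the backward parabolic cylinder centered at $(x_k, t_k)$ of spatial radius $A_k Q_k^{-1/2}$ and temporal depth $A_k Q_k^{-1}$, for some sequence $A_k \to \infty$. The spatial part of this bound comes from a standard selection of a point of nearly maximal curvature in a growing neighborhood. To push the bound backward in time I would combine this with the $\kappa$-noncollapsing hypothesis and Lemma~\ref{lem:pe}: the volume lower bound at the base time propagates into a scalar-curvature upper bound at slightly earlier times, and iterating this along a succession of earlier time slices pushes control onto the full parabolic cylinder.

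Next, I would consider the parabolically rescaled flows $g_k(t) := Q_k\, g(t_k + Q_k^{-1} t)$ pointed at $x_k$. By construction $R_{g_k}(x_k, 0) = 1$ and $R_{g_k} \le 4$ on the parabolic cylinder of radius $A_k$. Uniformly PIC combined with weakly PIC$_2$ forces $|\Rm_{g_k}| \le C R_{g_k}$, so the full curvature operator is bounded on these cylinders. Combined with Shi's local derivative estimates and the inherited $\kappa$-noncollapsing, Hamilton's compactness theorem yields a pointed Cheeger-Gromov subsequential limit $(M_\infty, g_\infty(t), x_\infty)_{t \in (-\infty, 0]}$. The conditions weakly PIC$_2$, uniformly PIC, $\kappa$-noncollapsing, and bounded curvature all pass to this limit, and noncompactness of $M_\infty$ follows from $A_k \to \infty$. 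Thus $(M_\infty, g_\infty)$ is a $\kappa$-solution, and Cheeger-Gromov convergence implies the rescaled neighborhoods around $(x_k, t_k)$ are $\ep$-close to it for all sufficiently large $k$, contradicting the starting assumption.

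The hardest part will be the backward-in-time curvature bound in the point-picking step. A spatial selection alone controls curvature only on a single time slice, whereas the compactness argument requires control on parabolic cylinders whose parabolic size tends to infinity in the rescaled metric. Handling this requires the careful combination of $\kappa$-noncollapsing with Lemma~\ref{lem:pe} described above. The uniformly PIC hypothesis is essential throughout: it reduces control of the full Riemann tensor to control of the scalar curvature, and without this reduction the scalar-curvature bound produced by Lemma~\ref{lem:pe} would not feed into Hamilton's compactness theorem in a useful way.
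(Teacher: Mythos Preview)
Your overall strategy (contradiction, point-picking, rescaling, compactness) is correct, but the point-picking step as you describe it does not work, and this gap cascades through the rest of the argument. Perelman-type point selection cannot produce a point that is simultaneously (i) still ``bad'' (so that the eventual convergence yields a contradiction) and (ii) has $R \le 4Q_k$ on a parabolic cylinder of size $A_k \to \infty$. If you iterate by passing to nearby bad points of higher curvature, termination only tells you that every nearby point with $R \ge 2Q_k$ is \emph{good}, i.e.\ has a canonical neighborhood; it does \emph{not} tell you that $R \le 2Q_k$. Conversely, if you iterate by passing to any nearby point of higher curvature, you can arrange $R \le 2Q_k$ on a ball at the final time, but the resulting point need not be bad, so there is nothing to contradict. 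Your proposed use of Lemma~\ref{lem:pe} does not rescue this: the constants $C(w)$ and $\tau(w)$ there depend on the volume ratio $w$, and under $\text{Ric}\ge 0$ the volume ratio of balls of radius $A_k$ (in the rescaled metric) degenerates like $A_k^{-n}$, so the resulting curvature bound blows up and the time interval shrinks to zero as $A_k \to \infty$.

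The paper's proof takes option (i): after point-picking, one only knows that nearby high-curvature points are modeled on $\kappa$-solutions. From this one first extracts local gradient estimates $|\nabla^m R| \le \eta_m \bar Q^{1+m/2}$ (Step~1), then obtains a curvature bound at $t=0$ that depends on the distance $L$ to the basepoint via Bishop--Gromov and Lemma~\ref{lem:pe} (Step~2). This is enough to take a limit at $t=0$, but the limit could a priori have unbounded curvature; ruling this out is a genuinely separate argument (Step~3) that uses the canonical-neighborhood structure at high-curvature points, a splitting/Toponogov argument, and \cite[Proposition~2.2]{CZ} to exclude shrinking neck sequences. Finally, pushing the limit backward to an ancient solution with uniformly bounded curvature (Step~4) again requires the canonical-neighborhood property and is not a direct consequence of Hamilton compactness. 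Your proposal collapses Steps~3 and~4 into the point-picking, which is where the argument breaks.
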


\begin{proof}
The proof is similar to the canonical neighborhood theorem for compact Ricci flows, see \cite{P1}\cite{KL08}\cite{MT}\cite{CZ}\cite{B1}, etc.. The argument here is easier since we assume uniformly PIC and weakly PIC$_2$ conditions, which in particular implies the nonnegative Ricci curvature. We sketch the proof for the reader's convenience.

Assume that there exists an $\bar \ep>0$ such that the conclusion does not hold for a sequence $(x_i,t_i) \in M \times [1,2]$ with $Q_k=R(x_k,t_k) \to \infty$. By a standard point-picking argument, we can assume that for any $A>0$ and any $(y,t) \in B_{g(t_k)}(x_k,AQ_k^{-1/2}) \times [t_k-AQ_k^{-1},t_k]$ with $R(y,t) \ge 2Q_k$, the conclusion of the theorem holds. Indeed, otherwise for a fixed $k$ there exists a spacetime sequence $(y_i,s_i)$ with $(y_0,s_0)=(x_k,t_k)$ satisfying (1) the conclusion fails, (2) $R_{i+1} \ge 2R_i$ and (3) $(y_{i+1},s_{i+1}) \in B_{g(s_i)}(y_i,AR_i^{-1/2}) \times [s_i-AR_i^{-1},s_i]$. Here we denote $R(y_i,s_i)$ by $R_i$. By our assumption, the distance is nonincreasing for $t$. Therefore, we have
\begin{align*}
&d_{g(t_k)} (x_k,y_I) \le \sum_{i=0}^{I-1} d_{g(s_i)}(y_i,y_{i+1}) \le  \sum_{i=0}^{I-1} AR_i^{-1/2} \le 4AR_0^{-1/2}, \\
&0\le t_k-s_I \le \sum_{i=0}^{I-1}(s_i-s_{i+1}) \le \sum_{i=0}^{I-1} AR_i^{-1} \le  2AR_0^{-1/2}.
\end{align*}
Therefore, the process must end after finite steps, which is a contradiction. By taking a diagonal sequence, we consider the spacetime limit of $(M,g_k(t),x_k)$ for $t \le 0$, where $g_k(t)=Q_kg(Q_k^{-1}t+t_k)$. To derive a contradiction, we only need to prove the limit is a $\kappa$-solution.

\textbf{Step 1}: We claim there exists a sequence $H_k \to \infty$ and constants $\eta_m>0,c>0$ satisfying the following. For any $(\bar x,\bar t) \in B_{g(t_k)}(x_k, H_k Q_k^{-1/2}) \times [t_k-H_kQ_k^{-1},t_k]$, if we set $\bar Q=Q_k+R(\bar x,\bar t)$, then on the parabolic neighborhood $P=B_{g(\bar t)}(\bar x,c\bar Q^{-1/2}) \times [\bar t-c\bar Q^{-1},\bar t]$ we have
\begin{align}
|\na^m R| \le \eta_m \bar Q^{\frac{m}{2}+1} \label{eq:501a}
\end{align}
for $m \ge 0$. Indeed, if $\bar Q \ge 3 Q_k$, then by our assumption $(\bar x,\bar t)$ has a canonical neighborhood and hence $|\partial_t R^{-1}|+|\na R^{-1/2}| \le C$. Therefore, the local geometry around $(\bar x, \bar t)$ is well-controlled. Moreover, the higher curvature estimates follow from Shi's local estimates. For more details, see \cite[Theorem $4.1$, Step $1$]{CZ} or \cite[Lemma $52.11$]{KL08}.

\textbf{Step 2}: Next, we prove that $(M,g_k(0),x_k)$ converges smoothly to a complete smooth Riemannian manifold $(M_{\infty},g_{\infty},x_{\infty})$. From Step $1$, there exist constants $c_1>0,C_1>0$ such that $R_{g_k(0)} \le C_1$ on $B_{g_k(0)}(x_k,c_1)$. Therefore, it follows from the $\kappa$-noncollapsing condition that $\text{Vol}\,B_{g_k(0)}(x_k,1) \ge v_0>0$. From the standard Bishop-Gromov volume comparison theorem, for any $L>0$ and $y \in B_{g_k(0)}(x_k,L)$, we have $\text{Vol}\, B_{g_k(0)}(y,1) \ge v_1$, where $v_1=v_1(v_0,n,L)>0$. Therefore, it follows from Lemma \ref{lem:pe} that there exist $C_2=C_2(v_1)>0$ and $\tau=\tau(v_1)>0$ such that
\begin{align}
R(x,t) \le C_2  \label{eq:501b}
\end{align}
for $(x,t) \in B_{g_{k}(0)}(y,1/4) \times [-\tau,0]$. Combining \eqref{eq:501a} and \eqref{eq:501b}, one easily concludes that the limit of $(M,g_k(0),x_k)$ is a complete smooth Riemannian manifold, which has uniformly PIC$_1$, weakly PIC$_2$, and is $\kappa$-noncollapsed.

\textbf{Step 3}: Next, we show that the curvature of the limit $(M_{\infty},g_{\infty},x_{\infty})$ must be uniformly bounded. From the proof of Step $2$, we can assume $(M_{\infty},g_{\infty},x_{\infty})$ is defined on a spacetime open neighborhood of $M_{\infty} \times (-\infty,0]$ which contains $M_{\infty} \times \{0\}$. If the curvature operator of $g_{\infty}(0)$ lies on the boundary of PIC$_2$ at some point, then it follows from \cite[Proposition 6.6]{B1} that the universal covering $( \tilde M_{\infty},g_{\infty}(0))$ is isometric to $(N \times \R,g_1 \times g_E)$, where $(N, g_1)$ is a complete Riemannian manifold with uniformly PIC$_1$ and weakly PIC$_2$. We claim that $N$ has bounded curvature. Otherwise there exists a sequence $q_k \in N$ such that $R_{g_1}(q_k) \to \infty$. By our assumption of the canonical neighborhood, $(N,g_1,q_k)$ is $2\bar \ep$-close to the standard $S^{n-1}/\Gamma$ and hence $N$ is compact, which is a contradiction.

Therefore, we may assume $(M_{\infty},g_{\infty})$ has strictly PIC$_2$. If the curvature is not bounded, we can choose $z_k \in M_{\infty}$ such that $R_{g_{\infty}}(z_k) \to \infty$. Similarly, we assume that $(M_{\infty},g_{\infty},z_k)$ is $2\bar \ep$-close to a $\kappa$-solution. By choosing a different sequence of points if necessary, we assume all $(M_{\infty},g_{\infty},z_k)$ is $2\bar \ep$-close to the standard $(S^{n-1}/\Gamma) \times \R$. In addition, it follows from \cite[Proposition A.2]{B2} that $\Gamma=\{1\}$. However, it contradicts \cite[Proposition $2.2$]{CZ}.

\textbf{Step 4}: Now $(M_{\infty},g_{\infty}(t))$ can be extended backwards to an ancient solution with uniformly bounded curvature. The proof of the claim follows verbatim from \cite[Theorem $4.1$, Step $4$]{CZ}, see also \cite[Step 4 in Page 2705]{KL08} and \cite[Theorem $7.2$, Step 6]{B1}.

In sum, we have proved that $(M_{\infty},g_{\infty}(t))_{t\in (-\infty,0]}$ is a $\kappa$-solution, which contradicts our assumptions on $x_k$.
\end{proof}

Next, we prove the following lemma, see also \cite[Theorem $3.6$]{C}.

\begin{lem} \label{lem:501}
Let $(M^n,g(t))_{t \in [0,T]}$ be a complete, $\kappa$-noncollapsed solution of the Ricci flow with weakly \emph{PIC}$_2$ and $(M,g(t))$ has bounded curvature for each $t \in [0,T]$. Then $(M,g(t))_{t \in [0,T]}$ has uniformly bounded curvature. 
\end{lem}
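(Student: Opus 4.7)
The plan is a ``largest time'' argument, with the backward-in-time curvature control provided by Perelman's local estimate (Lemma \ref{lem:pe}).

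Define $\mathcal T := \{t \in [0,T] : \sup_{M \times [0,t]} |\Rm| < \infty\}$ and $T^* := \sup \mathcal T$. Since $\sup_M |\Rm|(\cdot,0)$ is finite by hypothesis, Shi's local derivative estimates give a short interval $[0,\delta]$ on which the curvature is uniformly bounded, so $T^* > 0$. The goal is to show $T^* = T$ and $T \in \mathcal T$; equivalently, to rule out both a discontinuity of $t \mapsto \sup_M |\Rm|(\cdot,t)$ at $T^*$ and the possibility of $T^* < T$.

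The heart of the argument is the backward step. By hypothesis $K^* := \sup_M |\Rm|(\cdot, T^*)$ is finite; set $r^* := (K^*)^{-1/2}$, so that $|\Rm| \le (r^*)^{-2}$ on every ball $B_{g(T^*)}(x_0, r^*)$. By $\kappa$-noncollapsing in the sense of Definition \ref{def:singular}(ii), this forces $\text{Vol}\,B_{g(T^*)}(x_0, r^*) \ge \kappa (r^*)^n$. Feeding this volume lower bound into Lemma \ref{lem:pe} at time $T^*$ with weighting $w = \kappa$ (after a harmless shrinking of $r^*$ if necessary to satisfy $T^* \ge 2\tau r^2$) delivers $R \le C(\kappa) r^{-2}$ on the parabolic neighborhood $B_{g(T^*)}(x_0, r/4) \times [T^* - \tau(\kappa) r^2, T^*]$. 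Weakly PIC$_2$ forces nonnegative sectional curvature (take $\lambda = \mu = 0$ in its defining inequality) and hence $|\Rm| \le C(n) R$ pointwise, so this upgrades to a bound on $|\Rm|$; taking the union over $x_0 \in M$ gives a uniform bound $|\Rm| \le C'$ on $M \times [T^* - \eta, T^*]$ for some $\eta = \eta(K^*, \kappa, n) > 0$.

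Patching this backward slab with the uniform bound available on $[0, T^* - \eta]$ (since any $t < T^*$ lies in $\mathcal T$) shows $|\Rm|$ is uniformly bounded on $M \times [0, T^*]$, so $T^* \in \mathcal T$. If moreover $T^* < T$, Shi's estimate applied at time $T^*$ with this uniform bound extends the bound forward to $[T^*, T^* + \delta']$, contradicting the definition of $T^*$. Hence $T^* = T$ and $T \in \mathcal T$, which is the desired uniform bound on $M \times [0,T]$. The main obstacle is the backward step: Shi's estimate only propagates curvature bounds forward, so a separate mechanism is needed at $T^*$. The combination of $\kappa$-noncollapsing (which converts the per-slice curvature bound at $T^*$ into a volume lower bound) with weakly PIC$_2$ (which both validates Perelman's local estimate and permits control of $|\Rm|$ by $R$) is exactly tailored to this step, and explains why these are the hypotheses the lemma actually uses.
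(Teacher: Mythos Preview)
Your backward (closedness) step is correct and is a genuinely different route from the paper's. The paper shows $T^*\in\mathcal T$ by invoking the trace Harnack inequality on $[0,T^*)$ (which is available once curvature is bounded on compact subintervals) to get $tR(x,t)\le T^*\sup_M R(\cdot,T^*)$, and then patches with a bound near $t=0$. Your use of Lemma~\ref{lem:pe} instead---feeding in the $\kappa$-noncollapsed volume bound at $t=T^*$ to obtain a backward parabolic curvature estimate on $[T^*-\eta,T^*]$---is an equally clean alternative, and arguably more in keeping with the rest of the section since it reuses a lemma already proved for Theorem~\ref{thm:cano}.

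The forward (openness) step, however, has a real gap. Shi's local derivative estimates bound $|\nabla^m\Rm|$ in terms of an \emph{assumed} bound on $|\Rm|$ over a parabolic region; they do not propagate a time-zero curvature bound forward to positive times. The doubling-time estimate $|\Rm|(\cdot,t)\le 2K$ for $t\in[0,c(n)/K]$ likewise requires an a priori bound on $|\Rm|$ over the whole interval in order to run the maximum principle on a complete noncompact manifold, which is exactly what you are trying to establish. So neither your step ``$T^*>0$'' nor your final step ``extend from $T^*$ to $T^*+\delta'$'' is justified as written. The paper handles this forward step with two ingredients specific to the setting: first, the pseudolocality-type estimate of Cabezas-Rivas--Wilking \cite[Corollary~1.3]{CW} (which uses the weakly PIC$_2$ hypothesis and the volume lower bound coming from $\kappa$-noncollapsing) to get a rough bound $R\le C(t-t_0)^{-1}$ on a short forward interval, and second, Chen's local estimate \cite[Theorem~3.1]{C} to upgrade this to a uniform bound. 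You should replace the appeals to Shi with these (or equivalent) tools; once that is done, your open--closed argument goes through.
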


\begin{proof}
We claim that for any $t_0 \in [0,T]$, there exists an $\ep>0$ such that $R$ is uniformly bounded on $M \times [t_0,t_0+\ep)$. Indeed, since $(M,g(t_0))$ has bounded curvature and is $\kappa$-noncollapsed, it clear that there exists a $v_0>0$ such that $\text{Vol}\,B_{g(t_0)}(x,1) \ge v_0$ for any $x \in M$. Therefore by \cite[Corollary $1.3$]{CW}, there exists an $\ep>0$ such that $R \le C(t-t_0)^{-1}$ for $t \in [t_0,t_0+\ep)$. Now the claim follows from \cite[Theorem 3.1]{C}.

Next we define $I:= \{t \in [0,T] \mid \sup_{M \times [0,t]}R<\infty\}$. It is clear from the claim that $I$ is open and nonempty. On the other hand, for any $t_i \in I$ such that $\lim_{i \to \infty} t_i=\bar t$, we know from our definition that $(M,g(t))_{t \in [0, \bar t)}$ has bounded curvature in any compact time interval. Therefore, the trace Harnack inequality holds, see \cite{B}. In particular, $R$ is uniformly bounded on $M \times [0, \bar t]$, since 
\begin{align*}
tR(x,t) \le \bar t \lc \sup_{M \times \{\bar t\}} R \rc
\end{align*}
for any $x \in M$ and $t \le \bar t$. Since $I$ is both open and closed, $T \in I$ and the proof is complete.
\end{proof}

Now, we prove the main result of the section, which states that the assumption of the trace Harnack inequality in \cite[Proposition $6.11$]{B1} can be dropped.

\begin{prop} \label{prop:bdd}
Let $(M^n,g(t))_{t \in (-\infty,0]}$ be an $n$-dimensional, $\kappa$-noncollapsed, noncompact complete ancient solution to the Ricci flow with uniformly $\mathrm{PIC}$ and weakly $\mathrm{PIC}_2$. Then the curvature of $(M,g(t))_{t \in (-\infty,0]}$ is uniformly bounded. In particular, under the same assumption, the trace Harnack inequality holds for $(M,g(t))_{t \in (-\infty,0]}$.
\end{prop}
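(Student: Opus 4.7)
The plan is to argue by contradiction in three stages: (i) scalar curvature is bounded on each time slice; (ii) uniform bound on compact time intervals; (iii) uniform bound on all of $(-\infty, 0]$.

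For stage (i), suppose $\sup_M R(\cdot, t_0) = \infty$ for some $t_0 \leq 0$, and pick $x_k \in M$ with $R(x_k, t_0) \to \infty$. After time-translating so that $[t_0-2, t_0]$ becomes $[0,2]$, Theorem~\ref{thm:cano} applies at $(x_k, t_0)$: rescaling by $R(x_k, t_0)$ makes the parabolic neighborhood $\varepsilon_k$-close, with $\varepsilon_k \to 0$, to a $\kappa$-solution. By Theorem~\ref{thm:kappa}, this $\kappa$-solution is a shrinking cylinder (or quotient) or the Bryant soliton, and in either case it contains $\varepsilon$-necks of arbitrarily small scale. Unwinding the rescaling places $\varepsilon_k$-necks in $(M, g(t_0))$ of radii $r_k \sim R(x_k, t_0)^{-1/2} \to 0$. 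I would then invoke \cite[Proposition~2.2]{CZ}, which forbids a complete noncompact manifold with strictly positive sectional curvature from admitting such a vanishing sequence of necks, producing the contradiction, provided $(M, g(t_0))$ has strictly positive sectional curvature.

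To secure strict positivity, I would apply the strong maximum principle for weakly PIC$_2$ Ricci flows (as used in Step~3 of the proof of Theorem~\ref{thm:cano}): if the curvature operator of $g(t)$ touches the boundary of the PIC$_2$ cone anywhere in spacetime, the universal cover splits isometrically as $N \times \R^j$ with $(N, g_N(t))$ of strictly smaller dimension, still noncompact, $\kappa$-noncollapsed, and satisfying uniformly PIC and weakly PIC$_2$. A descending induction on dimension reduces to the case in which $(M, g(t))$ has strictly PIC$_2$ (hence strictly positive sectional curvature) at every spacetime point, legitimizing the use of \cite[Proposition~2.2]{CZ}. This reduction is the step I expect to be the main technical obstacle, because one must verify that the residual factor $N$ inherits a uniform PIC constant $\theta > 0$ and remains noncompact, so that Theorem~\ref{thm:cano} and the Chen--Zhu no-small-neck result continue to apply.

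Once slice-wise boundedness is in hand, Lemma~\ref{lem:501} (applied on $[-T, 0]$ by time-translation) yields a uniform curvature bound on $M \times [-T, 0]$ for every $T > 0$. Finally, since the curvature is bounded on every compact time interval and the solution has weakly PIC$_2$, Brendle's trace Harnack inequality \cite{B} applies and implies that $\sup_M R(\cdot, t)$ is non-decreasing in $t$; hence $\sup_{M \times (-\infty, 0]} R \leq \sup_M R(\cdot, 0) < \infty$, completing the proof. The last assertion of the proposition, that the trace Harnack inequality holds under the stated hypotheses, is then an immediate consequence of \cite{B} applied to the now uniformly bounded ancient solution.
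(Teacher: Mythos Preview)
Your three-stage outline (slice-wise bound, then Lemma~\ref{lem:501} for compact intervals, then trace Harnack for the global bound) is exactly the paper's argument, and stages (ii)--(iii) are correct as written.

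The place where you diverge is the reduction to strict $\mathrm{PIC}_2$, and here your proposed descending induction is both unnecessary and based on a misconception. You worry that the split factor $N$ must ``remain noncompact'' so that the same argument applies one dimension down; in fact $N$ is compact, and this immediately finishes that case. The paper argues as follows: if $\mathrm{Rm}$ touches $\partial(\mathrm{PIC}_2)$ anywhere in spacetime, then by \cite[Proposition~6.6]{B1} the universal cover splits as $N^{n-1}\times\R$, and the key point is that uniform $\mathrm{PIC}$ on $M$ becomes uniform $\mathrm{PIC}_1$ on the factor $N$ (see \cite[Proposition~7.14]{RFST}). Brendle's classification \cite[Theorem~6.4]{B1} (see also \cite{Yo17}) then forces $N$ to be a round $S^{n-1}$, so $\tilde M$ is a round cylinder and the proposition is trivially true in this case. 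No induction is needed, and your concern about $N$ inheriting a uniform $\theta$ is resolved automatically by the passage from $\mathrm{PIC}$ to $\mathrm{PIC}_1$.

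One further point you omit in stage~(i): after producing $\varepsilon_k$-necks in $(M,g(t_0))$ of shrinking radii, you must ensure they are modeled on $S^{n-1}\times\R$ rather than on a nontrivial quotient $(S^{n-1}/\Gamma)\times\R$, since \cite[Proposition~2.2]{CZ} is formulated for standard necks. The paper first uses Toponogov's splitting theorem (the $p_i$ escape to infinity in a manifold with nonnegative sectional curvature) to see that the blow-up limit itself is a cylinder, and then invokes \cite[Proposition~A.2]{B2}, which uses the strict $\mathrm{PIC}_2$ hypothesis on $M$ to conclude $\Gamma=\{1\}$.
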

\begin{proof}
If there exists $(x_0,t_0) \in M \times (-\infty,0]$ such that the curvature operator $\Rm(x_0,t_0)$ lies on the boundary of PIC$_2$ cone, then the universal cover $\tilde{M}$ of $M$ splits off a line by \cite[Proposition 6.6]{B1}. We assume $\tilde{M}$ is isometric to $N \times \R$ where $(N,g_1(t))_{t \in (-\infty,0]}$ is $(n-1)$-dimensional, $\kappa$-noncollapsed, complete ancient solution to the Ricci flow with uniformly PIC$_1$, see \cite[Proposition $7.14$]{RFST}. Therefore, it follows from \cite[Theorem 6.4]{B1} (see also \cite{Yo17}) that $N$ is homothetic to $S^{n-1}$. In this case, the conclusion is obviously true.

Therefore, we may assume that $M$ has strictly $\mathrm{PIC}_2$. From the Lemma \ref{lem:501}, we only need to prove the curvature is bounded for each time slice. Fix a $t_0 \le 0$, if the curvature at $t_0$ is unbounded, there exists a sequence $p_i$ with $Q_i=R(p_i,t_0) \to \infty$. By applying Theorem \ref{thm:cano} on $M \times [t_0-2,t_0]$, we conclude that $(M,Q_ig(t_0),p_i)$ converges smoothly to a $\kappa$-solution. Since the limit must contain a splitting direction, by Toponogov's splitting theorem, we may assume the limit is the standard $(S^{n-1}/\Gamma)\times \R$. By our assumption, it follows from \cite[Proposition A.2]{B2} that $\Gamma=\{1\}$. However, we obtain a contradiction by \cite[Proposition $2.2$]{CZ}.

By Lemma \ref{lem:501}, $(M^n,g(t))_{t \in (-\infty,0]}$ has bounded curvature on each compact time interval and hence the trace Harnack inequality holds, see \cite{B}. Therefore, the curvature is uniformly bounded since $R$ is nondecreasing along $t$.
\end{proof}

By combining Theorem \ref{thm:kappa} and Proposition \ref{prop:bdd}, we obtain the following theorem.

\begin{thm}\label{thm:kappa1}
Suppose $(M^n,g(t))_{t  \in (-\infty,0]}$ is a complete, noncompact, $\kappa$-noncollapsed ancient solution to the Ricci flow with uniformly \emph{PIC} and weakly \emph{PIC}$_2$. Then it is isometric to either a family of shrinking cylinders (or a quotient thereof ) or to the Bryant soliton.
\end{thm}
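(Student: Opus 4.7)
The proof is essentially a direct assembly of two results already established in the excerpt, so the plan is short. The hypotheses of the theorem give every condition in Definition \ref{def:kappa} for a $\kappa$-solution except the last one, namely uniform boundedness of the curvature. The plan is therefore to invoke Proposition \ref{prop:bdd} to upgrade the hypotheses to those of a $\kappa$-solution, and then appeal to Theorem \ref{thm:kappa} (Brendle--Naff) for the classification.

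More precisely, I would first note that $(M^n,g(t))_{t\in(-\infty,0]}$ satisfies the assumptions of Proposition \ref{prop:bdd} verbatim: it is $\kappa$-noncollapsed, noncompact, complete, and has uniformly PIC and weakly PIC$_2$. Applying that proposition yields a uniform bound $\sup_{M\times(-\infty,0]}|\mathrm{Rm}|<\infty$. Consequently $(M,g(t))_{t\in(-\infty,0]}$ verifies all four conditions in Definition \ref{def:kappa} and is by definition a $\kappa$-solution in the sense used in \cite{BN}. Theorem \ref{thm:kappa} then immediately gives that $(M,g(t))$ is isometric either to a family of shrinking cylinders (or a quotient thereof) or to the Bryant soliton, which is what is claimed.

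There is no real obstacle here, since the hard work has already been done earlier in the paper: the curvature improvement to weakly PIC$_2$ is not needed at this stage (it is already a hypothesis), and the unboundedness obstruction is precisely what Proposition \ref{prop:bdd} removes, via the canonical neighborhood Theorem \ref{thm:cano} together with \cite[Proposition 2.2]{CZ} and \cite[Proposition A.2]{B2}. Thus the present statement is essentially a packaging lemma combining Proposition \ref{prop:bdd} with Theorem \ref{thm:kappa}, and the proof should be only a few lines.
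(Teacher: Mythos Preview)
Your proposal is correct and matches the paper's own argument exactly: the paper states the theorem as an immediate consequence of combining Proposition~\ref{prop:bdd} (to obtain uniformly bounded curvature) with Theorem~\ref{thm:kappa} (the Brendle--Naff classification). No additional steps are needed.
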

\begin{rem}
The conclusion of Theorem \ref{thm:kappa1} also holds for any $3$-dimensional, $\kappa$-noncollapsed, noncompact complete ancient solution to the Ricci flow, based on Brendle's breakthrough \emph{\cite[Theorem $1.3$]{B0}}. On the other hand, it is not clear if there is any $3$-dimensional noncompact complete ancient solution to the Ricci flow which has unbounded curvature. For immortal solutions with nonnegative curvature operator and unbounded curvature, the readers can refer to \emph{\cite[Theorem $1.4$]{CW}}.
\end{rem}

\emph{Proof of Theorem \ref{thm:main1}}: Theorem \ref{thm:main1} follows immediately from Theorem \ref{thm:kappa1} and the curvature improvements obtained from Theorem \ref{thm:pic2} and Lemma \ref{lem:403}.

\section{Proof of Theorem \ref{thm:main2}}

\subsection{$\kappa$-solutions to the K\"ahler Ricci flow}

We first recall the following definition in the \ka setting.

\begin{defn}\label{def:kappa1} \emph{[}$\kappa$-solutions to the \ka Ricci flow\emph{]}
A complete ancient solution $(M^n,g(t))_{t  \in (-\infty,0]}$ to the \ka Ricci flow is called a $\kappa$-solution if it satisfies (1) $\kappa$-noncollapsed, (2) nonnegative bisectional curvature, and (3) uniformly bounded curvature.
\end{defn}

Notice that all $\kappa$-solutions (with a fixed $\kappa$) are compact in the $C^{\infty}_{\text{loc}}$-topology, see \cite[Theorem $2.1$]{N}. Next, we prove that any $\kappa$-solution must be of Type I, see also \cite[Lemma $2.5$]{DZ1}.

\begin{lem} \label{lem:type2}
For any $\kappa$-solution $(M^n,g(t))_{t  \in (-\infty,0)}$ such that $t=0$ is the singular time, there exists a constant $C_0>0$ such that $|t||R(x,t)| \le C_0$ for all $(x,t)  \in M\times (-\infty,0)$.
\end{lem}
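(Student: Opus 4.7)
The plan is to argue by contradiction via a Hamilton-style Type II blow-up. Suppose instead that $\sup_{M \times (-\infty,0)} |t| R = \infty$. Cao's trace Harnack inequality for \ka Ricci flow with nonnegative bisectional curvature gives $\partial_t R \ge 0$, so $R$ is uniformly bounded on every slab $M \times (-\infty, T]$ with $T < 0$. Together with the asymptotic \ka Ricci shrinker analysis (which forces $|t| R = O(1)$ as $t \to -\infty$ on any such $\kappa$-solution, cf.~\cite{N}), the divergence must therefore occur as $t \to 0^-$, and one can find $(x_k, t_k)$ with $t_k \to 0^-$ and $|t_k| R(x_k, t_k) \to \infty$. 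Hamilton's Type II point-picking on a suitable parabolic domain like $M \times (-\infty, T_k]$ with $T_k = -1/k$ then produces $(\bar x_k, \bar t_k)$ approximately maximizing $H(x,t) := |t| R(x,t)$; setting $Q_k := R(\bar x_k, \bar t_k)$, we have $\bar t_k \to 0^-$ and $|\bar t_k| Q_k \to \infty$.

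Near-maximality of $H$ combined with Cao's Harnack monotonicity backward in time furnishes a uniform bound $R(x,t) \le 4 Q_k$ on a parabolic region that exhausts all backward and local forward directions after rescaling. Rescaling by $g_k(s) := Q_k \cdot g(\bar t_k + Q_k^{-1}s)$ then gives a sequence of Kähler Ricci flows with uniformly bounded curvature on time intervals exhausting $\R$; using $\kappa$-noncollapsing and Hamilton's compactness theorem, a subsequence converges smoothly to a complete eternal \ka Ricci flow $(M_\infty, g_\infty(s))_{s \in \R}$ that is $\kappa$-noncollapsed, has nonnegative bisectional curvature and bounded curvature, and has $R_{g_\infty}$ attaining a positive spacetime maximum at $(\bar x_\infty, 0)$. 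The equality case of Cao's differential Harnack inequality then forces $(M_\infty, g_\infty)$ to be a gradient steady \ka Ricci soliton, so in particular $\sup_{M_\infty} R_{g_\infty}(\cdot, s)$ is independent of $s$.

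For the contradiction I would confront this steady soliton structure against the asymptotic behavior inherited from the original $\kappa$-solution: since the $t \to -\infty$ blow-down of the $\kappa$-solution is modeled on a \ka Ricci shrinker (whose scalar curvature decays like $|t|^{-1}$), a careful matching of the blow-up scale $Q_k |\bar t_k|$ against the shrinker scale should rule out the coexistence of the two structures, yielding the desired contradiction. The main obstacle I expect is precisely this final scale-matching step, where one must show that a gradient steady \ka Ricci soliton cannot in fact arise as the Hamilton blow-up of a $\kappa$-solution with a genuine finite singular time. A cleaner alternative, if available, is to invoke a direct rigidity/classification theorem for eternal \ka Ricci flow $\kappa$-solutions that rules out such limits; the preceding point-picking, rescaling, and compactness portions are fairly routine once the setup is in place.
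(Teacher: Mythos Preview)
Your overall strategy matches the paper's proof almost exactly: assume Type II, do Hamilton-style point-picking, rescale, extract a nonflat $\kappa$-noncollapsed eternal \ka Ricci flow limit with nonnegative bisectional curvature in which $R$ attains its maximum, and conclude via Cao's Harnack equality case that the limit is a gradient steady \ka Ricci soliton. The paper's point-picking is slightly cleaner than yours---it uses the standard weight $(\epsilon_i-t)(t-T_i)R(x,t)$ on intervals $[T_i,\epsilon_i]$ with $T_i\to-\infty$, $\epsilon_i\to 0^-$ (cf.\ \cite[Proposition~8.20]{CLN06}), which handles both time directions at once and avoids your preliminary detour arguing that divergence must occur as $t\to 0^-$.

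The substantive gap is precisely the step you flag as the ``main obstacle'': your proposed scale-matching of the steady blow-up against the asymptotic shrinker is vague and not the right route. The paper closes the argument with the ``cleaner alternative'' you allude to: after passing to the universal cover and applying Cao's dimension reduction \cite[Theorem~2.1]{Ca1} to reduce to positive Ricci, it invokes the rigidity theorem of Deng--Zhu \cite[Theorem~1.2]{DZ}, which says that no nonflat $\kappa$-noncollapsed \ka Ricci steady soliton with nonnegative bisectional curvature exists. That single citation is the missing ingredient in your proof; once you have it, your argument is complete and essentially identical to the paper's.
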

\begin{proof} 
Suppose that it is not true. We take $T_i \to -\infty$ and $\epsilon_i \to 0^-$ and choose $(x_i,t_i) \in M \times (T_i,\epsilon_i)$ so that 
$$(\epsilon_i-t_i)(t_i-T_i)R(x_i,t_i)=(1-\delta_i)\sup_{M \times [T_i,\epsilon_i]} (\epsilon_i-t)(t-T_i)R(x,t),$$
where $\delta_i \to 0$. Then it follows from direct computation, see \cite[Proposition $8.20$]{CLN06} for details, that for the rescaled metric $g_i(t):=Q_i g(t_i+Q_i^{-1}t)$ where $Q_i:=R(x_i,t_i)$, $(M,g_i(t),x_i)$ converges smoothly to $(M_\infty,g_\infty(t),x_\infty)_{t \in (-\infty,\infty)}$ which is a nonflat $\kappa$-noncollapsed eternal solution to the \ka Ricci flow. In particular, this limit has nonnegative bisectional curvature. Moreover, $R_{g_{\infty}}(x,t)\le 1$ and $R_{g_{\infty}}(x_\infty,0)= 1$ by our construction. After considering the universal cover of $M_\infty$ and Cao's dimension reduction argument \cite[Theorem $2.1$]{Ca1}, we may assume that $M_\infty$ is simply connected and has positive Ricci curvature. By using the result in \cite[Theorem $1.3$]{Ca2}, we conclude that $(M_\infty,g_\infty)$ is a nonflat, $\kappa$-noncollapsed \ka Ricci steady soliton with nonnegative bisectional curvature. However, such a \ka steady soliton does not exist by \cite[Theorem $1.2$]{DZ}.
\end{proof}

Notice that the \ka Ricci shrinker with nonnegative bisectional curvature is an important type of $\kappa$-solution. We recall its classification.

\begin{thm}[Theorem 3 of \cite{N}] \label{thm:shrinker}
Let $(M^n,g,f)$ be a \ka Ricci shrinker with nonnegative bisectional curvature. Then $(M,g)$ is isometric-biholomorphic to a quotient of $N^k \times \C^{n-k}$, where $N$ is a compact Hermitian symmetric spaces.
\end{thm}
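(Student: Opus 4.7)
The plan is to lift to the universal cover, split off a flat factor using a parallel null distribution of the Ricci tensor, and classify the remaining factor using Mok's theorem on compact \ka manifolds with nonnegative bisectional curvature.

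First I would lift to the universal cover $\tilde M$, which inherits the shrinker equation $\text{Ric}+\na^2 f=\lambda g$ (for some $\lambda>0$) and nonnegative bisectional curvature. Since this curvature condition is preserved under the \ka Ricci flow (Mok), the associated self-similar solution on $\tilde M\times(-\infty,0)$ satisfies it at every time. Applying Mok's strong maximum principle to the evolution of the bisectional curvature, the pointwise null set of the Ricci tensor (which coincides with the common null set of all bisectional curvatures involving that vector, thanks to $\text{Ric}(X,\bar X)=\sum_i R(X,\bar X,e_i,\bar e_i)\ge 0$) assembles into a $J$-invariant, parallel, holomorphic distribution $\mathcal D\subset T\tilde M$. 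On $\mathcal D$ every bisectional curvature vanishes, and on its orthogonal complement the Ricci tensor is strictly positive.

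Next, by the de Rham-Wu splitting theorem in the \ka setting, $\tilde M$ decomposes holomorphically and isometrically as $F\times N$, where $F$ integrates $\mathcal D$ and is genuinely flat, while $N$ integrates $\mathcal D^\perp$ and has strictly positive Ricci and nonnegative bisectional curvature. The potential splits as $f=f_F+f_N$. On $F$ the equation reduces to $\na^2 f_F=\lambda g_F$ on a complete simply connected flat \ka manifold, forcing $F\cong\C^{n-k}$ with $f_F$ a standard quadratic. On $N$, combining Hamilton's shrinker identity $R+|\na f|^2-\lambda f=\text{const}$ with the strict positivity of $\text{Ric}_N$ produces a diameter bound via Myers' theorem (after using the shrinker equation to control $\na^2 f_N$), so $N$ is compact. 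Then the Mori-Siu-Yau-Mok classification of compact \ka manifolds with nonnegative bisectional curvature identifies $N$ isometric-biholomorphically with a product of irreducible compact Hermitian symmetric spaces (no flat torus factor survives thanks to Ricci positivity). Descending from $\tilde M=\C^{n-k}\times N$ to $M$ via the deck group yields the desired expression.

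The main obstacle is the first step: since the shrinker may be noncompact and curvature is not assumed bounded, Mok's strong maximum principle cannot be applied naively on a compact time slice. The remedy is to exploit the self-similar structure to localize the maximum principle to parabolic cylinders where Shi's estimates provide derivative bounds, then use unique continuation to propagate the null distribution globally, and finally check that the resulting distribution is both parallel and $J$-invariant so the splitting is simultaneously Riemannian and holomorphic. A secondary subtlety is the compactness of $N$, which lacks an \emph{a priori} uniform positive lower bound on Ricci and must be extracted from the global shrinker identity rather than from a pointwise pinching.
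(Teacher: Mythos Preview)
The paper does not prove this statement at all: Theorem~\ref{thm:shrinker} is quoted verbatim from Ni~\cite{N} (as the attribution ``Theorem~3 of~\cite{N}'' indicates) and is used as a black box in the proof of Proposition~\ref{prop:beh}. There is therefore no proof in the paper to compare your proposal against.

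On its own merits, your outline follows the standard architecture for such splitting results (null distribution of Ricci, de~Rham--Wu decomposition, Mok's classification on the compact factor), and this is indeed close in spirit to how Ni and subsequent authors argue. Two points deserve sharpening, however. First, your compactness argument for $N$ is not complete: Myers' theorem needs a \emph{uniform} positive lower Ricci bound, which you explicitly say you lack. The correct route is to use the soliton identity $R+|\nabla f_N|^2-2\lambda f_N=\text{const}$ together with the quadratic growth of $f_N$ (which holds for any shrinker, cf.~Cao--Zhou) to see that $f_N$ is proper; then the strict positivity $\text{Ric}_N>0$ combined with $\nabla^2 f_N=\lambda g_N-\text{Ric}_N<\lambda g_N$ forces $f_N$ to be bounded, hence $N$ compact. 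Second, your remedy for the noncompact maximum principle (``localize to parabolic cylinders via Shi, then unique continuation'') is vague; the actual arguments in~\cite{N} go through the ancient-solution framework and Cao's dimension reduction, which supply the needed control without an ad~hoc localization. These are repairable, but as written the proposal is a plausible sketch rather than a proof.
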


Next, we recall that all compact $\kappa$-solutions are completely classified.

\begin{thm}[Proposition 2.8 of \cite{DZ1}] \label{thm:cpt}
Let $(M,g(t))_{t \in (-\infty,0]}$ be a compact $\kappa$-solution. Then it must be isometric-biholomorphic to a quotient of compact Hermitian symmetric space.
\end{thm}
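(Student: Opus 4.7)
My strategy is to extract the Type I blow-down of $(M,g(t))$ as $t \to -\infty$, apply the classification of Kähler Ricci shrinkers in Theorem \ref{thm:shrinker}, and then show by a rigidity argument that the original flow coincides with its asymptotic shrinker.

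The first step is to normalize the flow so that its maximal existence time is $t=0$. Since $M$ is compact and the solution is non-flat (a flat torus fails to be $\kappa$-noncollapsed at all scales), Hamilton's inequality $\partial_t R \geq \Delta R + \tfrac{2}{n}R^2$ forces the forward flow to become singular in finite time, so such a normalization is possible. Lemma \ref{lem:type2} then yields the Type I bound $|t|R(x,t) \leq C_0$ on $M\times(-\infty,0)$.

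The second step is the blow-down. Pick $\tau_i \to \infty$ and set $g_i(s) := \tau_i^{-1}g(\tau_i s)$ for $s \in (-\infty,0)$. The Type I bound gives uniform curvature bounds on compact subintervals of $(-\infty,0)$, and $\kappa$-noncollapsing together with Perelman's reduced-volume monotonicity yields a subsequential $C^\infty_{\mathrm{loc}}$ limit $(M_\infty,g_\infty(s),x_\infty)$ that is a non-flat complete gradient shrinking Kähler Ricci soliton with nonnegative bisectional curvature. Hamilton's trace Harnack inequality combined with $\kappa$-noncollapsing gives a uniform diameter bound on $(M,g_i(-1))$, forcing $M_\infty$ to be compact. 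Invoking Theorem \ref{thm:shrinker}, the limit must be isometric-biholomorphic to $N^k\times\mathbb{C}^{n-k}/\Gamma$, and compactness forces $k=n$, so $M_\infty$ is a finite quotient of a compact Hermitian symmetric space.

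The final step, and the main obstacle, is rigidity: upgrading the subsequential convergence to an identification of $(M,g(t))$ with the asymptotic shrinker. My plan is to invoke Perelman's $\mu$-entropy, noting that $\mu(g(t),-t)$ is monotone nondecreasing along a compact ancient flow and that its limit as $t\to-\infty$ equals the $\mu$-invariant of the asymptotic shrinker. Using the explicit structure of the asymptotic model provided by Step three together with the forward behavior at $t=0^-$, one can argue $\mu(g(t),-t)$ is in fact constant in $t$; Perelman's rigidity then identifies $(M,g(t))$ itself with a gradient shrinker, and a second application of Theorem \ref{thm:shrinker} completes the proof. The technical heart of this step is controlling the $\mu$-asymptotics near the singular time $t=0$ and transferring the soliton structure from $M_\infty$ back to $M$, for which one may also appeal to uniqueness of asymptotic shrinkers (in the spirit of Kotschwar--Wang) to bypass the entropy matching.
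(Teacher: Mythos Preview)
The paper does not prove this statement; it is quoted from \cite{DZ1} without argument, so there is no proof in the paper to compare your proposal against. What follows is an assessment of your outline on its own merits.

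Steps 1 and 2 are essentially correct, with one minor correction. The uniform diameter bound on $(M,g_i(-1))$ follows directly from the distance-distortion estimate in Theorem~\ref{thm:asm}(i) together with the compactness of $M$ (so that $\mathrm{diam}_{g(-\tau)}(M)\le \mathrm{diam}_{g(-1)}(M)+C\sqrt{\tau}$), not from the Harnack inequality as you write. With that in hand, the asymptotic shrinker is compact and, by Theorem~\ref{thm:shrinker}, a quotient of a compact Hermitian symmetric space; in particular it is K\"ahler--Einstein.

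Step 3 is where there is a genuine gap. Monotonicity of $t\mapsto\mu(g(t),-t)$ gives only $\nu_-\le\nu_+$, where $\nu_\pm$ denote the limits as $t\to-\infty$ and $t\to 0^-$; you never explain why equality holds, and without it Perelman's rigidity does not apply. The missing ingredients are: (i) perform the Type~I blow-up at the singular time $t=0$ to obtain a second compact K\"ahler shrinker $M'_\infty$ (and argue it is compact, which again needs a diameter bound); (ii) use that the complex structure is fixed along the flow and the convergence is smooth on a compact manifold to conclude that both $M_\infty$ and $M'_\infty$ are \emph{biholomorphic} to $M$; and (iii) invoke Bando--Mabuchi uniqueness of K\"ahler--Einstein metrics on a given Fano manifold to deduce that $M_\infty$ and $M'_\infty$ are isometric, hence $\nu_-=\nu_+$. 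Only then does the equality case of Perelman's monotonicity identify $(M,g(t))$ with a shrinker. Your appeal to Kotschwar--Wang is misplaced: that result concerns uniqueness of asymptotic cones of noncompact shrinkers and does not substitute for the argument above.
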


In particular, if we further assume $M$ to be complex 2-dimensional, then it must be isometric-biholomorphic to $\CP^2$ or $\CP^1 \times \CP^1$, up to scalings on each factor. From now on, we will consider the noncompact case. First, we prove the following elementary result.

 \begin{lem} \label{lem:quo}
 Let $(M^2,g(t))$ be a $\kappa$-solution whose universal cover is $\C \times \CP^1$. Then it is biholomorphic-isometric to a family of shrinking $\C \times \CP^1$ itself.
 \end{lem}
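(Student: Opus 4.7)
The plan is to show that the deck group $\Gamma = \pi_1(M)$ of the covering $\tilde M = \C \times \CP^1 \to M$ must be trivial. Equipping $\tilde M$ with the standard product Kähler-Ricci flow (flat $\C$ factor and shrinking round $\CP^1$ factor), $\Gamma$ acts freely and properly discontinuously on $\tilde M$ by biholomorphic isometries of $\tilde g(t)$ for every $t$. Since the de Rham decomposition of $\tilde g(t)$ into a flat factor and a positively curved Einstein factor is unique, every $\gamma \in \Gamma$ splits as $\gamma = (\gamma_1, \gamma_2)$ with $\gamma_1 \in U(1) \ltimes \C$ and $\gamma_2 \in PSU(2)$. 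The key observation is that every non-identity element of $PSU(2) \cong SO(3)$ has fixed points on $\CP^1 \cong S^2$; hence for $\gamma \neq \mathrm{id}$ to act freely on $\tilde M$, $\gamma_1$ must be fixed-point-free on $\C$, i.e., a nontrivial translation $z \mapsto z + b_\gamma$. Thus $\gamma \mapsto b_\gamma$ is a homomorphism $\Gamma \to \C$ whose kernel consists of elements $(\mathrm{id}_\C, \gamma_2)$, and any such $\gamma$ fixes $\C \times \{w_0\}$ for each fixed point $w_0$ of $\gamma_2$, forcing $\gamma_2 = \mathrm{id}$. Proper discontinuity of the $\Gamma$-action together with the compactness of $\CP^1$ then forces the image to be a discrete subgroup of $\C$, so $\Gamma \cong \{0\}$, $\Z$, or $\Z^2$.

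The second step rules out the two nontrivial cases using the $\kappa$-noncollapsing at all scales, applied as $t \to -\infty$. Along the shrinking flow the $\CP^1$-factor at time $t$ has area of order $|t|$ and sectional curvature of order $|t|^{-1}$; choosing $r = \sqrt{|t|}$ gives geodesic balls of radius $r$ on which the curvature bound $|\mathrm{Rm}| \leq r^{-2}$ holds. If $\Gamma \cong \Z^2$, then $M$ is a $\CP^1$-bundle over a fixed flat torus, so $\mathrm{Vol}(M, g(t)) \leq C r^2$, and for $r$ large enough the ball exhausts $M$; this yields $C r^2 \geq \kappa r^4$, a contradiction. If $\Gamma \cong \Z$, then $M$ is a $\CP^1$-bundle over a flat cylinder of fixed circumference $|v|$, and a ball of radius $r \gg |v|$ wraps around the cylinder, giving $\mathrm{Vol}(B_r) \leq C|v| r^3$, again contradicting the bound $\kappa r^4 \leq C|v|r^3$ for $r$ sufficiently large. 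Hence $\Gamma$ is trivial and $M$ is isometric-biholomorphic to the shrinking $\C \times \CP^1$ itself.

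The main subtlety I anticipate is the possibly twisted product structure of $M$: even when $\gamma_1$ is a translation, $\gamma_2$ may be nontrivial, so $M$ is only a $\CP^1$-bundle with possibly nontrivial $PSU(2)$ monodromy rather than a literal product. The volume estimates are nevertheless robust because the projection $M \to \C/\mathrm{image}(\Gamma)$ is a Riemannian submersion with totally geodesic $\CP^1$ fibers of constant area inherited from the product upstairs; thus the total volume and ball volumes depend only on the base area and fiber area, both of which are controlled.
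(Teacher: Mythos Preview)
Your proof is correct and follows essentially the same route as the paper: split each deck transformation as a product isometry $(\gamma_1,\gamma_2)$, observe that $\gamma_2 \in SO(3)$ always fixes a point on $\CP^1$, deduce that $\gamma_1$ must be fixed-point-free on $\C$ (hence a translation, generating an infinite group), and then derive a volume collapse at scale $r \sim \sqrt{|t|}$ as $t \to -\infty$ to contradict $\kappa$-noncollapsing. The paper's version is slightly terser---it does not separately classify $\Gamma$ as $\{0\}$, $\Z$, or $\Z^2$, but simply notes that a nontrivial deck group forces $\lim_{r\to\infty} r^{-4}\,\mathrm{Vol}\,B_{g(-r^2)}(\bar x,r)=0$---whereas your case split and the explicit fiber-bundle volume estimates make the collapsing mechanism more transparent.
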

 \begin{proof} First, we investigate an isometry of $\C \times \CP^1$. Let $\Phi : \C \times \CP^1 \to \C \times \CP^1$ and $v_1$ is a vector field tangent to $\C$. Then $\Phi_*(v_1)$ is also tangent to $\C$ since it is parallel. Therefore, $\Phi$ preserves the product structure and we can write $\Phi=(\Phi_1,\Phi_2)$ where $\Phi_1 : \C \to \C$ and $\Phi_2 : \CP^1 \to \CP^1$ are isometries. Now, let $f : \C \times \CP^1 \to \C \times \CP^1$ be an isometry corresponding to the covering map $\C \times \CP^1 \to M$. If $f$ is not trivial, we can write $f=(f_1,f_2)$ where $f_1$ is an orientation preserving rigid motion on $\R^2$ and $f_2 \in \text{SO}(3)$. In particular, we know that $f_2 \in \text{SO}(3)$ has a fixed point. Therefore, $f_1$ has no fixed point and hence generates a infinite group. Now we show that $(M,g(t))_{t \in (-\infty,0)}$ is not $\kappa$-noncollapsed. By our assumption, $|\Rm(x,-r^2)| \le Cr^{-2}$. For a fixed $\bar x$, it is easy to see
$$
\lim_{r \to \infty}\frac{\text{Vol}\,B_{g(-r^2)}(\bar x,r)}{r^4}=0,
$$
which is a contradiction.
\end{proof}
For later applications, we recall the following theorem of Type I ancient solution, see also \cite{XCQZ}.

\begin{thm} \label{thm:asm}
Let $(M,g(-\tau))_{\tau \in (0,\infty)}$ be a $\kappa$-solution and $R(x,-\tau) < C_0/\tau$ for any $x \in M$ and $\tau \in (0,\infty)$. Then we have the following.
\begin{enumerate}[label=(\roman*)]
    \item For any $0<\tau_1<\tau_2$ and $p,q \in M$, we have
$$d_{g(-\tau_2)}(p,q)-8(n-1)C_0(\sqrt{\tau_2}-\sqrt{\tau_1})\le d_{g(-\tau_1)}(p,q) \le d_{g(-\tau_2)}(p,q).$$
\item For any $p \in M$ and $\tau_i \to \infty$, if we set $g_i(-\tau)=\tau_i^{-1}g(-\tau_i\tau)$, then the sequence $(M,g_i(-\tau),p)_{\tau \in (0,\infty)}$ subconverges smoothly to a complete, nonflat \ka Ricci shrinker $(M_\infty,g_\infty(-\tau),p_\infty)_{\tau \in (0,\infty)}$ with nonnegative bisectional curvature.
\end{enumerate}
\end{thm}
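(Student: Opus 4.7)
The proof naturally splits into two parts: a distance distortion estimate (i) that follows directly from the Ricci flow evolution together with the Type I bound, and a blow-down/asymptotic shrinker construction (ii) following Perelman's reduced volume machinery.

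For part (i), the upper bound $d_{g(-\tau_1)}(p,q) \le d_{g(-\tau_2)}(p,q)$ is a direct consequence of nonnegativity of Ricci: on a K\"ahler manifold with nonnegative bisectional curvature each Ricci eigenvalue is nonnegative, so $\partial_t g = -2\text{Ric} \le 0$ when $t=-\tau$ increases (i.e.\ $\tau$ decreases), and hence the length of any rectifiable curve cannot grow in forward time. For the lower bound, I will combine the curvature hypothesis with Hamilton's distance distortion lemma. Since on a K\"ahler manifold with nonnegative bisectional curvature the Ricci tensor satisfies $0\le \text{Ric}\le R\cdot g \le (C_0/\tau)\,g$, applying Hamilton's estimate with the optimal choice of radius $r_0$ at each time gives $\frac{d}{d\tau} d_{g(-\tau)}(p,q)\le c(n,C_0)\,\tau^{-1/2}$, and a direct integration from $\tau_1$ to $\tau_2$ yields the stated bound with a constant of the form $8(n-1)C_0$.

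For part (ii), I plan a standard Hamilton compactness argument followed by a reduced volume identification. The rescaled scalar curvature satisfies $R_{g_i}(x,-\tau)=\tau_i R(x,-\tau_i\tau) < C_0/\tau$, which is uniformly bounded on any compact subinterval of $(0,\infty)$; the $\kappa$-noncollapsing condition is scale invariant and the estimate in (i) is also scale-invariant in the sense that it rescales without deterioration. These facts together provide uniform local geometric control across the sequence, so Hamilton's compactness theorem produces a smooth subsequential limit $(M_\infty, g_\infty(-\tau), p_\infty)$ on $(0,\infty)$ that is itself a $\kappa$-noncollapsed complete solution to the K\"ahler--Ricci flow with nonnegative bisectional curvature. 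To promote this limit to a gradient shrinker, I will apply Perelman's reduced volume $\tilde V_p(\tau)$ based at $(p,0)$: it is monotone nonincreasing in $\tau$ and uniformly bounded above, so it has a finite limit $\tilde V_\infty$ as $\tau\to\infty$. Because $\tilde V$ is scale invariant, the reduced volumes of the rescaled flows converge to a constant function $\tilde V_\infty$ in $\tau$, and the rigidity case of Perelman's monotonicity formula then forces $g_\infty$ to be a gradient shrinking K\"ahler--Ricci soliton.

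The main obstacle I expect is ensuring nonflatness of the limit. This will be handled by showing $\tilde V_\infty < (4\pi)^{n/2}$. The point is that a nonflat $\kappa$-solution that is noncompact with nonnegative Ricci necessarily has asymptotic volume ratio equal to zero (this is classical for $\kappa$-solutions; one can also argue via Lemma \ref{lem:type2}'s dimension reduction that the flat case is precluded), so its reduced volume drops strictly below the Euclidean value $(4\pi)^{n/2}$. The rigidity equality $\tilde V_{g_\infty}\equiv \tilde V_\infty<(4\pi)^{n/2}$ then excludes $g_\infty$ being isometric to flat $\mathbb{C}^n$, giving the desired nonflatness of the asymptotic shrinker.
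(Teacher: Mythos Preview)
Your proposal is correct and follows essentially the same route as the paper. The paper's own proof is a two-line citation: part (i) is attributed to Perelman's distance distortion estimate \cite[Lemma~8.3]{P}, which is exactly the Hamilton-type lemma you invoke, and part (ii) is attributed to Naber \cite[Theorem~3.1]{AN} together with Ni's K\"ahler reduced distance \cite{N}, which is precisely the reduced-volume monotonicity/rigidity mechanism you outline (compactness plus constancy of the asymptotic reduced volume forcing a nonflat shrinker).
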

\begin{proof}
(i) follows directly from \cite[Lemma $8.3$]{P} and (ii) follows from \cite[Theorem $3.1$]{AN} by using the reduced distance introduced in \cite{N}. 
\end{proof}

Next, we prove the following characterization of the behavior near the singular time. Here, a \ka manifold $M$ is said to be irreducible if its universal cover is not isometric-biholomorphic to a product of two \ka manifolds of smaller dimensions. From the uniqueness of the Ricci flow \cite{CZ06} and \cite[Corollary $1.2$]{Kot14}, we know that in our case a \ka Ricci flow is irreducible if and only if any time slice is irreducible.

\begin{prop} \label{prop:her}
Let $(M,g(-\tau))_{\tau \in (0,\infty)}$ be an irreducible $\kappa$-solution and let $\tau=0$ be the singular time. Then the followings are equivalent.
\begin{enumerate}[label=(\alph*)]
    \item $M$ is isometric-biholomorphic to a quotient of compact Hermitian symmetric space.
    \item There exists $p \in M$ such that $\lim_{\tau \to 0}R(p,-\tau)=\infty$.
    \item For all $p \in M$, we have $\lim_{\tau \to 0}R(p,-\tau)=\infty$.
\end{enumerate}
\end{prop}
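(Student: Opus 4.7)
The implications $(c)\Rightarrow(b)$ and $(a)\Rightarrow(c)$ are handled directly. $(c)\Rightarrow(b)$ is trivial, and for $(a)\Rightarrow(c)$, a quotient of a compact Hermitian symmetric space is K\"ahler--Einstein with positive Einstein constant $\lambda$, so the K\"ahler Ricci flow evolves as $g(t)=(1-2\lambda t)g_0$ with singular time $t=1/(2\lambda)$. Reparametrizing $\tau=1/(2\lambda)-t$ yields $g(-\tau)=2\lambda\tau\,g_0$ and hence $R(x,-\tau)=n/\tau\to\infty$ as $\tau\to 0$ at every $x\in M$.

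The substantive direction is $(b)\Rightarrow(a)$; the plan is to show $M$ must be compact and then invoke Theorem \ref{thm:cpt}. Assume $R(p_0,-\tau_j)\to\infty$ along $\tau_j\to 0$. By Cao's Harnack inequality for K\"ahler Ricci flow with nonnegative bisectional curvature, $\partial_t R\ge 0$, so $R(p_0,-\tau)$ is monotone nonincreasing in $\tau$ and tends to $\infty$ along the full net. Perform the Type I rescaling $g_j(-\tau):=\tau_j^{-1}g(-\tau_j\tau)$; after passing to a subsequence (using the Type I ceiling $\tau R(p_0,-\tau)\le C_0$ from Lemma \ref{lem:type2}, and choosing $\tau_j$ so as to realize the limsup of $\tau R(p_0,-\tau)$ as $\tau\to 0$, which one can arrange to be positive by a preliminary pointwise rescaling at $(p_0,-\tau_j)$ in the degenerate case), Theorem \ref{thm:asm}(i) together with $\kappa$-solution compactness yields a smooth subsequential limit $(M_\infty,g_\infty(-\tau),p_\infty)$, a complete $\kappa$-solution with $R_{g_\infty}(p_\infty,-1)>0$. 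A Perelman--Naber-type self-similarity argument adapted to the K\"ahler setting (in the spirit of Theorem \ref{thm:asm}(ii), via Perelman's reduced volume; cf.~\cite{N}) then identifies $g_\infty$ as a K\"ahler Ricci shrinker, and Theorem \ref{thm:shrinker} writes $M_\infty$ as a quotient of $N^k\times\C^{n-k}$ with $N$ a compact Hermitian symmetric space.

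The closing step is to force $k=n$ using the irreducibility of $M$, and then to lift compactness back to $M$. If $n-k\ge 1$, the $\C$-factor in $M_\infty$ carries a nonzero parallel holomorphic vector field, which via the smooth convergence $g_j\to g_\infty$ produces an approximately null direction of the bisectional curvature operator on $(M,g_j)$ for large $j$; Mok's strong maximum principle for bisectional curvature along the K\"ahler Ricci flow should then force a genuine de Rham splitting of $M$, contradicting the irreducibility hypothesis. Hence $k=n$, so $M_\infty$ is compact, and pointed Cheeger--Gromov convergence onto a compact limit forces $M$ itself to be diffeomorphic to $M_\infty$ (the exhaustion embeddings become open-and-closed in the connected manifold $M$). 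Theorem \ref{thm:cpt} then delivers $(a)$. The main obstacle is this final rigidity step: a direct spatial Harnack approach to propagating the blow-up pointwise fails, because the Type I ceiling $R(p_0,-\tau)\le C_0/\tau$ cannot overcome the exponential loss $e^{-Cd^2/\tau}$ in Cao's spatial Harnack, so one genuinely needs a strong-maximum-principle/shrinker-rigidity argument to exclude the $\C$-factor in the blow-up limit under the irreducibility assumption.
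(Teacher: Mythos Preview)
Your $(a)\Rightarrow(c)$ and $(c)\Rightarrow(b)$ are fine, but $(b)\Rightarrow(a)$ has a genuine gap at precisely the step you flag as the ``main obstacle'': excluding the $\C$-factor in the blow-up limit via irreducibility of $M$. An approximately null direction for the bisectional curvature on the rescaled metrics $(M,g_j)$ does not give an actual null direction on any fixed time slice of $(M,g)$---the ratio of bisectional curvature to scalar curvature tending to zero along a sequence of points as $\tau\to 0$ is far weaker than what Mok's strong maximum principle requires, which is an honest zero at some spacetime point. A de Rham splitting of the blow-up limit $M_\infty$ simply does not propagate back to $M$ by this route, and no alternative is supplied. (Separately, the shrinker structure of the $\tau\to 0$ limit is not delivered by Theorem~\ref{thm:asm}(ii), which is stated for $\tau_i\to\infty$; a Naber-type argument at the singular time would need to be set up independently, and your handling of the degenerate case $\tau R(p_0,-\tau)\to 0$ via a ``preliminary pointwise rescaling'' is not a proof.)

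The paper sidesteps all of this by proving the cycle $(a)\Rightarrow(b)\Rightarrow(c)\Rightarrow(a)$. For $(b)\Rightarrow(c)$: if $R(p,-\tau_i)\to\infty$ but $R(q,-\tau_i)\le C$, then since distances are nonincreasing in $t$ one has $d_{g(-\tau_i)}(p,q)\le d_{g(-\tau_1)}(p,q)$, and compactness of $\kappa$-solutions gives a uniform Harnack $R(p,-\tau_i)\le C'R(q,-\tau_i)$, a contradiction. For $(c)\Rightarrow(a)$: the $\kappa$-solution derivative estimate $|\partial_\tau R|\le\eta R^2$ integrates to $R(p,-\tau)^{-1}-R(p,-\tau_0)^{-1}\le\eta(\tau-\tau_0)$; sending $\tau_0\to 0$ and using $(c)$ yields $R(p,-\tau)\ge 1/(\eta\tau)$ uniformly in $p$. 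A positive lower bound on scalar curvature on a complete noncompact K\"ahler manifold with nonnegative bisectional curvature contradicts the average curvature decay of Ni--Tam \cite[Theorem~0.4]{NT}, forcing $M$ compact, whereupon Theorem~\ref{thm:cpt} finishes. No blow-up analysis or shrinker classification is needed.
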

\begin{proof}
The implication (a) $\Rightarrow$ (b) is obvious.

(b) $\Rightarrow$ (c) :Suppose not. Then there exists $p,q \in M$ such that for $\tau_i \to 0$, we have $R(p,-\tau_i) \to \infty$ but $R(q,-\tau_i) \le C$ for some $C>0$. Since $d_{g(-\tau_i)}(q,p) \le d_{g(-\tau_1)}(q,p)$ is uniformly bounded, it follows from the compactness of $\kappa$-solutions (see \cite{P} \cite{N}) that $R(p,-\tau_i) \le CR(q,-\tau_i) \le C_1$. This is a contradiction.

(c) $\Rightarrow$ (a) : Fix $\tau>0$. Then for any $\tau_0\in(0,\tau)$ and $p \in M$, from the estimate $\left| \frac{\del R}{\del \tau} \right| \le \eta R^2$ we obtain
$$
\frac{1}{R(p,-\tau)}-\frac{1}{R(p,-\tau_0)} \le \eta(\tau-\tau_0).
$$
Now, by taking $\tau_0 \to 0$, since $R(p,-\tau_0) \to \infty$, we have $R(p,-\tau) \ge \frac{1}{\eta\tau}$. Therefore, the scalar curvature of $(M,g(-\tau))$ has a positive lower bound. If $M$ is noncompact, it contradicts the average curvature decay in \cite[Theorem $0.4$]{NT}. So $M$ has to be compact. Now the result follows from Theorem \ref{thm:cpt}
\end{proof}

Next, we prove the following result about the asymptotic scalar curvature ratio.
\begin{lem} \label{lem:ascr}
Let $(M,g(-\tau))_{\tau \in (0,\infty)}$ be a $\kappa$-solution. Then for any $\tau \in (0,\infty)$ and $p \in M$, we have 
$$
\liminf_{d_{g(-\tau)}(x,p)\to \infty}R(x,-\tau)d^2_{g(-\tau)}(x,p)=\infty.
$$
\end{lem}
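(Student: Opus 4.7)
The plan is to argue by contradiction using the compactness of Kähler $\kappa$-solutions (Theorem $2.1$ of \cite{N}). Suppose the conclusion fails for some $\tau_0 > 0$ and $p \in M$, i.e., there exist a finite constant $C$ and a sequence $x_i \in M$ with $d_i := d_{g(-\tau_0)}(x_i, p) \to \infty$ but $R(x_i, -\tau_0)\,d_i^2 \le 2C$. We may assume $(M, g(t))$ is nonflat, since otherwise $R \equiv 0$ on $M$ and the claim becomes vacuous in the relevant setting. By the strong maximum principle applied to the evolution of $R$ (valid because the solution has uniformly bounded curvature), we then have $R > 0$ everywhere on $M \times (-\infty,0]$.

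Set $Q_i := R(x_i, -\tau_0)$ and consider the parabolic rescalings $g_i(t) := Q_i\, g(-\tau_0 + Q_i^{-1} t)$ for $t \in (-\infty, 0]$. Each $(M, g_i(t))$ is again a Kähler $\kappa$-solution with the same $\kappa$, and $R_{g_i(0)}(x_i) = 1$. By the compactness theorem in \cite{N}, after passing to a subsequence, $(M, g_i(t), x_i)$ converges in the pointed $C^\infty_{\mathrm{loc}}$ topology to a Kähler $\kappa$-solution $(M_\infty, g_\infty(t), x_\infty)_{t \in (-\infty,0]}$ with $R_{g_\infty(0)}(x_\infty) = 1$ and nonnegative bisectional curvature.

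By construction, $d_{g_i(0)}(x_i, p) = \sqrt{Q_i}\,d_i \le \sqrt{2C}$, so $p$ stays at uniformly bounded distance from $x_i$ in the rescaled metrics. Since $(M_\infty, g_\infty(0))$ is complete and smooth, the scalar curvature is bounded on the compact ball $\bar{B}_{g_\infty(0)}(x_\infty, \sqrt{2C}+1)$, say by some constant $K_0$. By smooth pointed convergence, for $i$ large the point $p$ sits in the diffeomorphic image of this ball and $R_{g_i(0)}(p) \le 2K_0$. Untangling the rescaling, $R(p, -\tau_0) / Q_i \le 2K_0$, so $Q_i \ge R(p,-\tau_0)/(2K_0) > 0$. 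Combined with $Q_i d_i^2 \le 2C$, this forces $d_i$ to remain bounded, contradicting $d_i \to \infty$.

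The main technical point to be careful about is justifying that $p$ actually appears in the Cheeger--Gromov limit, i.e., that the pointed limit based at $x_i$ captures all points within bounded distance. This is automatic from completeness of $(M_\infty, g_\infty(0))$ and the standard form of smooth pointed convergence: any closed metric ball of fixed radius in the limit is eventually contained in the image of the approximating diffeomorphisms, with scalar curvature of the pullback converging uniformly. Given that, the remaining calculation is elementary; the only nonstandard input is the invocation of the $\kappa$-solution compactness theorem, together with the positivity of $R(p,-\tau_0)$, which is what ultimately converts the bounded-distance information into a quantitative lower bound on $Q_i$.
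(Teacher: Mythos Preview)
Your argument is correct and proceeds by a genuinely different route from the paper's.

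The paper rescales by $\rho_i = d_{g(-1)}(p,p_i)^2$ rather than by $Q_i = R(p_i,-\tau_0)$, and then invokes Theorem~\ref{thm:asm} to recognize the limit based at the fixed point $p$ as a nonflat K\"ahler Ricci shrinker with singular time $\tau=1$. Since the points $p$ and $p_i$ are at rescaled distance exactly $1$, the two pointed limits coincide; the contradiction is that $R_\infty(p_\infty,-1)\le C$ while a nonflat shrinker must have $R\to\infty$ at its singular time. This leans on the asymptotic shrinker structure, which in turn requires the Type~I bound of Lemma~\ref{lem:type2} and the reduced distance apparatus behind Theorem~\ref{thm:asm}.

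Your proof bypasses all of that. You only use the compactness of K\"ahler $\kappa$-solutions and the positivity of $R$ (via the strong maximum principle for nonflat solutions with bounded curvature). The logic is simply that the limit has bounded scalar curvature on a fixed ball, so $R(p,-\tau_0)/Q_i$ is uniformly bounded above, forcing $Q_i$ to be bounded below and hence $d_i$ to be bounded. This is the same mechanism as in Perelman's three-dimensional argument and is both shorter and more robust: it does not require knowing anything about the asymptotic soliton. The paper's route, on the other hand, foreshadows the shrinker structure that is exploited heavily in Proposition~\ref{prop:beh}, so it fits more naturally into the surrounding narrative even if it is heavier machinery for this particular lemma.

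One minor remark: your aside that the flat case is ``vacuous in the relevant setting'' is the right observation---the lemma is only applied to nonflat solutions later, and the paper's own proof implicitly assumes nonflatness as well (Theorem~\ref{thm:asm} yields a \emph{nonflat} shrinker). It would be cleanest to add the nonflat hypothesis to the statement.
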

\begin{proof} Suppose not. Then we may assume it does not hold when $\tau=1$. So there exists a sequence $\{p_i\} \subset M$ such that $d_{g(-1)}(p,p_i) \to \infty$ but $R(p_i,-1)d^2_{g(-1)}(p,p_i) \le C$ for some $C>0$. If we take $\rho_i=d^2_{g(-1)}(p,p_i)$ and consider $g_i(-\tau)=\rho_i^{-1}g(\rho_i(1-\tau)-1)$, then from the compactness of $\kappa$-solutions, a sequence $(M,g_i(-\tau),p_i)_{\tau \in [1,\infty)}$ subsequentially converges to $(M_\infty,g_\infty(-\tau),p_\infty)_{\tau \in [1,\infty)}$. Moreover, since $\rho_i \to \infty$, one can use Theorem \ref{thm:asm} to show that $(M,g_i(-\tau),p)_{\tau \in (1,\infty)}$ subsequentially converges to a nonflat \ka Ricci shrinker $(M'_\infty,g'_\infty(1-\tau),p'_\infty)_{\tau \in (1,\infty)}$. But since $d_{g_i(-1)}(p,p_i)=1$ for all $i$, we know that both limits are isometric. Note that $$R_\infty(p_\infty,-1)=\lim_{i \to \infty}R_i(p_i,-1)=\lim_{i \to \infty}d^2_{g(-1)}(p,p_i)R(p_i,-1)\le C$$
from the assumption. However, since $\tau=1$ is the singular time of the \ka Ricci shrinker, we have $R_{g_\infty}(q,-\tau) \to \infty$ for any $q \in M_\infty$  as $\tau \to 1$, which is a contradiction. 
\end{proof}

Using this result, we can investigate the asymptotic behavior of the ancient solution in the following way.
\begin{prop} \label{prop:beh}
Let $(M^2,g(-\tau))_{\tau \in [0,\infty)}$ be a noncompact complex 2-dimensional nonflat $\kappa$-solution. Then for any sequence $(p_i,\tau_i) \in M \times (0,\infty)$ with $\tau_i \to \infty$ and $Q_i=R(p_i,-\tau_i)$, the sequence $(M,Q_ig(-\tau_i),p_i)$ subsequentially converges to $\C \times \CP^1$ in the Cheeger-Gromov sense. Here we assume the scalar curvature of $\C \times \CP^1$ is identically $1$.
\end{prop}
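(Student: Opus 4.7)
The plan is to extract a subsequential Cheeger-Gromov limit $(N_\infty, h_\infty(t), q_\infty)_{t \le 0}$ of the rescaled ancient flows $\bar g_i(t) := Q_i g(-\tau_i + Q_i^{-1} t)$ based at $p_i$, using the compactness of K\"ahler $\kappa$-solutions (\cite[Theorem 2.1]{N}), and then to identify $N_\infty$ as $\C \times \CP^1$; the limit is a complete K\"ahler $\kappa$-solution normalized so that $R_{h_\infty(0)}(q_\infty) = 1$.

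First, $N_\infty$ must be noncompact: otherwise, for $R$ exceeding its diameter, the Cheeger-Gromov diffeomorphisms $\phi_i : N_\infty \to U_i \subset M$ would realize $U_i$ as a clopen submanifold of the connected $M$ of equal dimension, forcing $M$ itself to be compact. Since $N_\infty$ is Type I by Lemma \ref{lem:type2}, Theorem \ref{thm:asm}(ii) applied to $N_\infty$ yields a nonflat K\"ahler Ricci shrinker as the subsequential limit of $(N_\infty, \sigma_j^{-1} h_\infty(-\sigma_j \sigma), q_\infty)$ along $\sigma_j \to \infty$. The same noncompactness argument applied to this shrinker, combined with the classification in Theorem \ref{thm:shrinker} (ruling out $\C^2$ by nonflatness and the compact Hermitian symmetric cases by noncompactness) and Lemma \ref{lem:quo} together with $\kappa$-noncollapsing (ruling out nontrivial quotients), identifies the asymptotic shrinker of $N_\infty$ as $\C \times \CP^1$.

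To promote this to $N_\infty = \C \times \CP^1$, I would extract a line in $(N_\infty, h_\infty(0))$ and apply Cheeger-Gromoll splitting. The shrinker $\C \times \CP^1$ carries a bi-infinite minimizing geodesic through $\bar q$ in the flat direction, and the Cheeger-Gromov convergence $(N_\infty, \sigma_j^{-1} h_\infty(-\sigma_j), q_\infty) \to (\C \times \CP^1, \bar h(-1), \bar q)$ produces, for each $R > 0$, points $y_j^{\pm R} \in N_\infty$ whose $h_\infty(-\sigma_j)$-distances to $q_\infty$ and to each other are $\approx R \sqrt{\sigma_j}$ and $\approx 2R \sqrt{\sigma_j}$ respectively. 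Applying Theorem \ref{thm:asm}(i) with $\tau_1 = 0$, $\tau_2 = \sigma_j$ transfers these distances to the time-$0$ slice with only an additive $O(\sqrt{\sigma_j})$ loss, so the corresponding $h_\infty(0)$-minimizing segments have length $\to \infty$ as $j \to \infty$. Choosing midpoints $m_j$ and, if necessary, re-basing and extracting a further Cheeger-Gromov limit (then propagating any resulting splitting back to $N_\infty$ via the strong maximum principle for the Ricci curvature evolution under K\"ahler Ricci flow), an Arzela-Ascoli argument furnishes a bi-infinite minimizing geodesic; the Cheeger-Gromoll splitting theorem (valid since $\mathrm{Ric} \ge 0$) then produces an isometric $\R$-factor, and the K\"ahler structure (which pairs the parallel tangent field $X$ along the line with the parallel field $JX$) upgrades the splitting holomorphically to $N_\infty = N'_\infty \times \C$ with $N'_\infty$ a complex one-dimensional $\kappa$-noncollapsed K\"ahler $\kappa$-solution. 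The case $N'_\infty = \C$ is excluded by $R_{h_\infty(0)}(q_\infty) = 1 > 0$, hence $N'_\infty = \CP^1$; after the standard normalization we obtain $N_\infty = \C \times \CP^1$. The principal obstacle is precisely this line extraction with possibly escaping midpoints $m_j$, where the splitting may first appear only in a re-based limit and must then be transported back to $N_\infty$ itself using the strong maximum principle.
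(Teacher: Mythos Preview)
Your approach has a genuine gap at the line-extraction step. You correctly identify the asymptotic shrinker of the limit $N_\infty$ as $\C\times\CP^1$ and attempt to transport its line to $(N_\infty,h_\infty(0))$ using the distance estimate of Theorem~\ref{thm:asm}(i). However, that estimate loses an additive $8(n-1)C_0\sqrt{\sigma_j}$, so the excess
\[
d_{h_\infty(0)}(q_\infty,y_j^{+R})+d_{h_\infty(0)}(q_\infty,y_j^{-R})-d_{h_\infty(0)}(y_j^{+R},y_j^{-R})
\]
is bounded only by $O(\sqrt{\sigma_j})\to\infty$, independently of $R$; the minimizing segments therefore need not meet any fixed compact set, and the midpoints $m_j$ may escape. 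Your fallback---re-basing at $m_j$, obtaining a split limit, and ``propagating the splitting back via the strong maximum principle''---does not close the gap: the strong maximum principle requires an actual null direction of $\mathrm{Ric}$ \emph{in $N_\infty$}, not merely in a further Cheeger--Gromov limit. Smooth convergence only tells you $\mathrm{Ric}$ is \emph{nearly} degenerate along $m_j$, and a $\kappa$-solution can perfectly well have $\mathrm{Ric}>0$ everywhere while all spatial-infinity blow-downs split. More fundamentally, you have reduced the problem for $M$ to the identical problem for $N_\infty$, which is a noncompact $\kappa$-solution of the same type with no additional structure; this is circular.

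The paper avoids this by never trying to split an abstract limit. It performs a case analysis on $Q_i\tau_i$ and $\tilde Q_i:=R(p_i,0)$, always working in $M$. When $d_{g(0)}(p,p_i)/\sqrt{\tau_i}$ stays bounded for some fixed $p$, Theorem~\ref{thm:asm}(ii) applied to $M$ settles it directly. Otherwise $d_{g(0)}(p,p_i)\to\infty$, and when $\tilde Q_i\to 0$ the key input is Lemma~\ref{lem:ascr}, which gives $\tilde Q_i\, d^2_{g(0)}(p,p_i)\to\infty$; together with the nonnegative curvature operator (Lemma~\ref{lem:405}), Toponogov's splitting theorem then produces a line in the limit of $(M,\tilde Q_i g(0),p_i)$, using the \emph{fixed} reference point $p\in M$ as the source of diverging rays---exactly the anchor your argument lacks. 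The remaining case ($Q_i\tau_i\to C>0$ with $\tilde Q_i\tau_i\to\infty$) is eliminated via the derivative bound $|\partial_\tau R|\le\eta R^2$ and Proposition~\ref{prop:her}, which force the limit to be compact, a contradiction.
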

\begin{proof} 
First, we consider the case when $p_i=p$ for all $i$. Let $g_i(-\tau)=\tau_i^{-1}g(-\tau_i\tau)$. From Theorem \ref{thm:asm}, we know that the limit of $(M,g_i(-\tau),p)_{\tau \in (0,\infty)}$ exists as a nonflat, noncompact, K\"{a}hler Ricci shrinker with nonnegative bisectional curvature. From Theorem \ref{thm:shrinker} and Lemma \ref{lem:quo}, we know that this limit has to be a shrinking $(\C \times \CP^1,g_\infty(-\tau))_{\tau \in (0,\infty)}$. We assume that $R(p,-\tau_i)\tau_i \to L$, then the limit of $(M,g_i(-1),p)$ is homothetic to the limit of $(M,\tilde{g}_i(-1),p)$ where $\tilde{g}_i(-1)=Q_ig(-\tau_i)$. Therefore, the conclusion holds. Now we assume that $R(p,-\tau_i)\tau_i\to 0$. From $\left| \frac{\del R}{\del \tau} \right| \le \eta R^2$, we have
$\frac{1}{R(p,-\tau_i)}-\frac{1}{R(p,0)} \le \eta\tau_i$. This inequality implies that
$$
1-\eta\tau_iR(p,-\tau_i) \le \frac{R(p,-\tau_i)}{R(p,0)} \le 1.
$$
So we get $\frac{R(p,-\tau_i)}{R(p,0)} \to 1$ as $i \to \infty$. But then the trace Harnack inequality implies that $R(p,-\tau)$ is constant in $\tau$. From the same argument as in the proof of Lemma \ref{lem:type2}, we conclude that $M$ is flat, which is a contradiction. Therefore, it completes the proof when the sequence $\{p_i\}$ is a fixed point.

If there exists a point $p \in M$ such that $\frac{d_{g(0)}(p,p_i)}{\sqrt{\tau_i}} \le D$ for all $i$, then the limits of $(M,g_i(-1),p)$ and $(M,g_i(-1),p_i)$ are isometric to each other since by Theorem \ref{thm:asm}
$$
d_{g_i(-1)}(p,p_i)=\tau_i^{-\frac12}d_{g(-\tau_i)}(p,p_i)\le \tau_i^{-\frac12}(D\sqrt{\tau_i}+8C_0\sqrt{\tau_i})= 8C_0+D
$$
for large enough $i$. Therefore, we may assume $\frac{d_{g(0)}(p,p_i)}{\sqrt{\tau_i}} \to \infty$. There are two possibilities. 

\textbf{Case 1.} $Q_i\tau_i \to 0$

Let $\tilde{Q}_i=R(p_i,0)$. By integrating $\left| \frac{\del R}{\del \tau} \right| \le \eta R^2$ and multiplying $Q_i$ on both sides, we get
$$0\le 1-\frac{Q_i}{\tilde{Q}_i} \le \eta Q_i\tau_i \to 0.$$
So we have $\frac{Q_i}{\tilde{Q}_i} \to 1$ as $i \to \infty$. In particular, we have $\tilde{Q}_i\to 0$. It implies that the limit, denoted by $(M_{\infty},g_{\infty}(-\tau),p_{\infty})_{\tau \in (0,\infty)}$, of $(M,Q_ig(-Q_i^{-1}\tau),p_i)_{\tau \in (0,\infty)}$ is isometric to that of $(M,\tilde{Q}_ig(-\tilde{Q}_i^{-1}\tau),p_i)_{\tau \in (0,\infty)}$. Note that we know
$\tilde{Q}_id^2_{g(0)}(p,p_i)\to \infty$
from Lemma \ref{lem:ascr} and $\tilde{Q}_ig(-\tilde{Q}_i^{-1}\tau)$ has a nonnegative curvature operator from Lemma \ref{lem:405}, we can apply Toponogov's splitting theorem to conclude that the limit of $(M,\tilde{Q}_ig(-\tilde{Q}_i^{-1}\tau),p_i)$ splits a line generated by the vector field $V$. Since the limit is equipped with a complex structure $J_{\infty}$, the vector field $J_{\infty}V$ generates another splitting direction. Therefore, the universal covering $(\tilde M_{\infty}, \tilde g_{\infty}(\tau),p_{\infty})$ is biholomorphic-isometric to $\C \times \CP^1$, where we have used the fact that every real 2-dimensional nonflat $\kappa$-noncollapsed ancient solution is a shrinking sphere \cite[Corollary $11.3$]{P}. Therefore, it follows from Lemma \ref{lem:quo} that $(M_{\infty},g_{\infty}(-\tau),p_{\infty})_{\tau \in (0,\infty)}$ is biholomorphic-isometric to a family of shrinking $(\C \times \CP^1,g_\infty(-\tau))_{\tau \in [0,\infty)}$ with a unit scalar curvature at $\tau=1$. Therefore, $(M,Q_ig(-\tau_i),p_i)$ converges smoothly to $\C \times \CP^1$.

\textbf{Case 2.} $Q_i\tau_i \to C>0$

For simplicity, we assume that $C=1$. Then there are 2 possible subcases. If $\tilde{Q_i}\tau_i \to L$ for some finite $L>0$, then it implies that the limit of $(M,Q_ig(-Q_i^{-1}\tau),p_i)_{\tau \in (0,\infty)}$ is homothetic to that of $(M,\tilde{Q}_ig(-\tilde{Q}^{-1}_i\tau),p_i)_{\tau \in (0,\infty)}$. From the previous argument, we know that the limit of $(M,\tilde{Q}_ig(-\tilde{Q}^{-1}_i\tau),p_i)_{\tau \in (0,\infty)}$ is isometric to $(\C \times \CP^1,g_\infty(-\tau))_{\tau \in (0,\infty)}$. Therefore, it is clear that the limit of $(M,Q_ig(-\tau_i),p_i)$ is also $(\C \times \CP^1,g_\infty(-1))$.

Now we assume that $\tilde{Q}_i\tau_i \to \infty$. From the compactness, for $\hat{g}_i:=\tau_i^{-1}g(-\tau_i\tau)$, a sequence $(M,\hat{g}_i(-\tau),p_i)_{\tau \in (0,\infty)}$ subsequentially converges to a noncompact $\kappa$-solution $(M_\infty,\hat{g}_\infty(-\tau),p_\infty)_{\tau \in (0,\infty)}$. If $M_\infty$ is reducible, then we can argue as above that the limit is $\C \times \CP^1$. In this case, since we already have $Q_i\tau_i \to 1$, the limit of $(M,\hat{g}_i(-\tau),p_i)_{\tau \in (0,\infty)}$ is isometric to the limit of $(M,Q_ig(-\tau_i\tau),p_i)_{\tau \in (0,\infty)}$ which verifies the statement by taking $\tau=1$. Therefore, we may assume that $M_\infty$ is irreducible. By integrating $\left| \frac{\del}{\del \tau} R \right| \le \eta R^2$ from 0 to $-\tau_i\tau$ and dividing both sides by $\tau_i$, we obtain
$$\frac{1}{\tau_iR(p_i,-\tau_i\tau)}-\frac{1}{\tilde{Q}_i\tau_i}\le \eta \tau$$
Since $\tilde{Q}_i\tau_i \to \infty$ from the assumption, it implies that $\hat{R}_i(p_i,-\tau)=\tau_iR(p_i,-\tau_i\tau)\ge \frac{1}{2\eta \tau}$ for large enough $i$. In particular, we can deduce that $\lim_{\tau \to 0}\hat{R}_\infty(p_\infty,-\tau)=\infty$. From Proposition \ref{prop:her}, it implies that $M_\infty$ has to be compact, which is a contradiction.
\end{proof}

\subsection{Construction of the fibration}

With the help of Proposition \ref{prop:beh}, the following proposition is immediate. In the following, we will drop the time parameter $-\tau$ for simplicity unless there is confusion. For all the spheres $S^2$, we assume the scalar curvature is identically $1$.

\begin{prop} \label{prop:loc}
Let $(M^2,g(-\tau))_{\tau \in [0,\infty)}$ be a noncompact complex 2-dimensional nonflat $\kappa$-solution. For any $\ep>0$, there exists a $\bar \tau>0$ such that for any $(x,-\tau) \in M \times (-\infty, -\bar \tau]$, there exists an open neighborhood $\Omega_x \ni x$ with a diffeomorphism $\psi_x : \Omega_x \to B(0,100) \times S^2 \subset \R^2 \times S^2$ such that 
\begin{enumerate}[label=(\alph*)]
\item  $\psi_x(x)=(0, \bar s)$, where $\bar s$ is the north pole of $S^2$.
\item For the standard metric $g_0$ on $\R^2 \times S^2$ and $i \in [0,\epsilon^{-1}]$ and $g_x=R(x)g$, we have $$\sup_{\Omega_x}|\na_{g_x}^i(g_x-\psi_x^*g_0)|_{g_x}\le \epsilon.$$
\item The map $\varphi_x=\pi_1 \circ \psi_x : (\Omega_x,g_x) \to (B(0,99),g_0|_{B(0,99)})$ is an $\ep$-Riemannian submersion.
\end{enumerate}
\end{prop}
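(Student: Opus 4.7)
The plan is to derive Proposition \ref{prop:loc} by contradiction, feeding a putative sequence of counterexamples into Proposition \ref{prop:beh} to extract a limit with the desired product structure. Suppose the statement fails for some $\ep_0>0$. Then we can pick a sequence $(x_i,-\tau_i)\in M\times(-\infty,0]$ with $\tau_i\to\infty$ for which no admissible neighborhood $\Omega_{x_i}$ exists. Setting $Q_i=R(x_i,-\tau_i)$ and $g_i=Q_ig(-\tau_i)$, Proposition \ref{prop:beh} gives, after passing to a subsequence, a smooth Cheeger--Gromov limit $(M,g_i,x_i)\to(\C\times\CP^1,g_0,(0,\bar s))$, where $\bar s$ is the north pole of the $\CP^1$ factor and $g_0$ is normalized so that $R_{g_0}\equiv 1$.

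Unpacking the Cheeger--Gromov convergence, we obtain an exhaustion $U_i$ of $\C\times\CP^1$ by open neighborhoods of $(0,\bar s)$ and smooth diffeomorphisms $\Phi_i:U_i\to V_i\subset M$ with $\Phi_i(0,\bar s)=x_i$ such that $\Phi_i^*g_i\to g_0$ smoothly on every compact subset of $\C\times\CP^1$. For $i$ large enough, $B(0,100)\times S^2\subset U_i$; define $\Omega_i=\Phi_i(B(0,100)\times S^2)$ and $\psi_i:=\Phi_i^{-1}:\Omega_i\to B(0,100)\times S^2$. Property (a) is then automatic. For property (b), the smooth convergence $\Phi_i^*g_i\to g_0$ means $|\na^j_{g_i}(g_i-\psi_i^*g_0)|_{g_i}\to 0$ uniformly on $\Omega_i$ for each fixed $j$, so for $i$ sufficiently large this quantity is below $\ep_0$ for every $j\in[0,\ep_0^{-1}]$.

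For property (c), observe that the first projection $\pi_1:(\C\times\CP^1,g_0)\to(\C,g_{\mathbb{C}})$ is a genuine Riemannian submersion by virtue of the product structure. Hence $\varphi_i=\pi_1\circ\psi_i$ is a Riemannian submersion onto $(B(0,99),g_0|_{B(0,99)})$ when the domain carries the pullback metric $\psi_i^*g_0$. Since $g_{x_i}=g_i$ converges to $\psi_i^*g_0$ in $C^k$, the difference between $d\varphi_i$ measured with $g_{x_i}$ and with $\psi_i^*g_0$ shrinks to zero on horizontal vectors, so $\varphi_i$ is an $\ep_0$-Riemannian submersion for large $i$. Combining the three properties contradicts the defining property of the sequence $(x_i,-\tau_i)$, finishing the argument.

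There is no substantive obstacle: the whole content of the proposition is already packaged inside Proposition \ref{prop:beh}; the proof is a translation of smooth Cheeger--Gromov convergence to the local submersion data. The only point requiring minor care is making sure the implicit constants (how large $i$ must be) can be taken uniform in $\ep_0^{-1}$ many derivatives, which follows from the smooth (not merely $C^k$ for a fixed $k$) nature of the convergence supplied by Proposition \ref{prop:beh}.
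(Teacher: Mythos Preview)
Your argument is correct and matches the paper's approach: the paper simply declares the proposition ``immediate'' from Proposition~\ref{prop:beh}, and your contradiction-plus-Cheeger--Gromov unpacking is exactly the routine verification behind that word.
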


Now, we will construct a transition map $\varphi_{x,y}$ between two local fibrations $\varphi_x$ and $\varphi_y$ defined in Proposition \ref{prop:loc}. This strategy originates in \cite{CG,CFG}, see also \cite{CL}.  In the following, the function $\delta(\epsilon) \to 0$ as $\epsilon \to 0$ and $\delta(\ep)$ may be different line by line.

\begin{prop} \label{prop:pat0}
With the same assumptions as in Proposition \ref{prop:loc}, for any $x,y \in M$ with $d_{g}(x,y)\le 10 r$ with $r=r(x,y)=1/\sqrt{\max(R(x),R(y))}$, if we set $\Omega_{x,y}=\Omega_{x} \cap \Omega_{y}$, then there is a $\delta(\epsilon)$-almost isometry $\varphi_{x,y} : \varphi_{y}(\Omega_{x,y}) \to \varphi_{x}(\Omega_{x,y})$. Moreover, it satisfies the following properties.
\begin{enumerate}[label=(\alph*)]
    \item $|\varphi_x-\varphi_{x,y}\circ \varphi_y| \le \delta(\epsilon) r$.
    \item $|D\varphi_x-D\varphi_{x,y}\circ D\varphi_y|\le \delta(\epsilon)$.
\end{enumerate}
\end{prop}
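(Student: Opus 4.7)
The approach is to identify both $\Omega_x$ and $\Omega_y$ with open subsets of the model $B(0,100)\times S^2$ via $\psi_x,\psi_y$, define $\varphi_{x,y}$ by averaging the $\varphi_x$-image along the $S^2$-fibers coming from $\psi_y$, and verify (a)--(b) by exploiting the rigidity of isometries of $\R^2\times S^2$.

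First I would check that $R(x)$ and $R(y)$ are comparable: $R(y)/R(x)=1+O(\ep)$. Since $d_g(x,y)\le 10r\le 10/\sqrt{R(x)}$, the point $y$ lies in $\Omega_x$, and because $(\psi_x^{-1})^*g_x$ is $\ep$-close to the standard product metric $g_0$ on $\R^2\times S^2$ (whose scalar curvature is identically $1$), the scalar curvature of $g_x$ at $y$ is $1+O(\ep)$, which equals $R(y)/R(x)$. Hence $r\sqrt{R(x)}=1+O(\ep)$ and it suffices to work in $g_x$ at the model scale.

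Now consider the transition diffeomorphism $\Psi:=\psi_x\circ\psi_y^{-1}$ between $\psi_y(\Omega_{x,y})$ and $\psi_x(\Omega_{x,y})$ inside $B(0,100)\times S^2$. By Proposition \ref{prop:loc}(b), $\Psi$ is a $\delta(\ep)$-almost isometry in $C^{[\ep^{-1}]}$. By the de Rham decomposition of $\R^2\times S^2$, any isometry between connected subsets has the form $I(u,s)=(Au+b,Bs)$ with $A\in O(2)$, $b\in\R^2$, $B\in O(3)$. A standard rigidity/implicit-function argument then produces such $I$ with $\Psi=I+O(\delta(\ep))$ in $C^{[\ep^{-1}]-1}$. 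Consequently $\tilde\varphi_x:=\varphi_x\circ\psi_y^{-1}=\pi_1\circ\Psi$ is $\delta(\ep)$-close in $C^{[\ep^{-1}]-1}$ to the affine map $(u,s)\mapsto Au+b$, which is independent of $s$. This lets me set
\[
\varphi_{x,y}(u)\;:=\;\frac{1}{|S^2|}\int_{S^2}\tilde\varphi_x(u,s)\,d\mu_{g_0}(s)
\]
for $u\in\varphi_y(\Omega_{x,y})$, which is well defined since $\psi_y(\Omega_{x,y})$ contains essentially a full $S^2$-fiber over each such $u$ (boundary corrections contribute at most $\delta(\ep)$). By construction $\varphi_{x,y}$ is $\delta(\ep)$-close to $u\mapsto Au+b$, hence a $\delta(\ep)$-almost isometry. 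For (a): writing $\psi_y(p)=(u,s)$ gives $\varphi_y(p)=u$ and $\varphi_x(p)=\tilde\varphi_x(u,s)$, so
\[
|\varphi_x(p)-\varphi_{x,y}(\varphi_y(p))|=|\tilde\varphi_x(u,s)-\varphi_{x,y}(u)|\le\operatorname{osc}_{s'\in S^2}\tilde\varphi_x(u,s')\le\delta(\ep),
\]
which translates to $\delta(\ep)r$ after converting back to the original metric $g$ (whose length-unit is $r\approx 1/\sqrt{R(x)}$ times the model's).

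The main obstacle is (b). For $v\in T_pM$ decompose $D\psi_y(p)v=(v_h,v_v)\in T_uB(0,100)\oplus T_sS^2$; then $D\varphi_y(p)v=v_h$ and $D\varphi_x(p)v=\partial_u\tilde\varphi_x(u,s)v_h+\partial_s\tilde\varphi_x(u,s)v_v$. The $C^2$-closeness of $\Psi$ to $I$ yields the \emph{pointwise} bounds $\partial_u\tilde\varphi_x(u,s)=A+O(\delta(\ep))$ and $\partial_s\tilde\varphi_x(u,s)=O(\delta(\ep))$. Differentiating under the integral sign, $D\varphi_{x,y}(u)=A+O(\delta(\ep))$, whence $D\varphi_{x,y}(\varphi_y(p))D\varphi_y(p)v=Av_h+O(\delta(\ep))|v|$, and subtracting gives (b). The delicate ingredient is the pointwise (not merely averaged) bound on $\partial_s\tilde\varphi_x$: this is where the full $C^k$-closeness with $k\ge 2$ in Proposition \ref{prop:loc}(b) is genuinely needed, as opposed to mere $C^0$-closeness.
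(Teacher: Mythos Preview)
Your argument is correct and reaches the same conclusion, but the construction and the key mechanism differ from the paper's.

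\textbf{Definition of $\varphi_{x,y}$.} The paper simply sets $\varphi_{x,y}(p)=\pi_1\circ\psi_x\circ\psi_y^{-1}(p,\bar s)$, i.e.\ it evaluates the transition map at the fixed north pole $\bar s\in S^2$ rather than averaging over the fiber as you do.

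\textbf{Why the fiber dependence is small.} Instead of invoking a rigidity/implicit-function lemma to produce a genuine product isometry $I$ close to $\Psi=\psi_x\circ\psi_y^{-1}$, the paper argues directly: from the $C^2$-closeness \eqref{eq:601} of $g_1=\Psi^*g_0$ to $g_0$ it follows that if $V$ is $g_0$-parallel along $\R^2$ then $\Psi_*V$ satisfies $|\nabla_{g_0}\Psi_*V|\le\delta(\ep)$, hence $\Psi_*V$ is itself almost tangent to the $\R^2$ factor (and similarly for vectors tangent to $S^2$). Property (a) is then obtained by integrating along a fiber geodesic from $(p_1,s_1)$ to $(p_1,\bar s)$ and using this bound; (b) follows from the same vector estimates.

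\textbf{Comparison.} The paper's route is more elementary and self-contained: no black-box rigidity step, only the parallel-transport estimate and a line integral. Your averaging construction is more canonical (no distinguished point on $S^2$) and makes (b) fall out cleanly by differentiating under the integral, but it leans on the unstated rigidity lemma and on the informal claim that each fiber $\{u\}\times S^2$ lies ``essentially'' in $\psi_y(\Omega_{x,y})$. The paper handles that last issue the same way you implicitly do, noting in a remark that one should slightly shrink $\Omega_x,\Omega_y,\Omega_{x,y}$ so that they are unions of full $S^2$-fibers.
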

\begin{proof} 
From the assumption, we have $d_{g_y}(x,y)=\sqrt{R(y)}d_g(x,y)\le 10$ and hence $x \in \Omega_y$. It follows from Proposition \ref{prop:loc} that
\begin{align}
\left| \frac{R(x)}{R(y)}-1 \right| \le \delta(\epsilon). \label{eq:600}
\end{align}
Moreover, if we set $g_1=(\psi_x \circ \psi_y^{-1})^* g_0$, then it follows from Proposition \ref{prop:loc} and \eqref{eq:600} that $g_1$ and $g_0$ are $C^2$-close on $\psi_y(\Omega_{x,y})$. More precisely, on $\psi_y(\Omega_{x,y})$ one has
\begin{align}
|g_1-g_0|+|\na_{g_0}g_1|+|\na^2_{g_0} g_1| \le \delta(\ep), \label{eq:601}
\end{align}
where the norms are with respect to $g_0$. Next, we prove the map $\psi_x \circ \psi_y^{-1}$ almost preserves the product structure. Indeed, if $V$ is a parallel vector field along $\R^2$, with respect to $g_0$, and $V_1=(\psi_x \circ \psi_y^{-1})_* V$, then from \eqref{eq:601} we have
\begin{align}
|\na_{g_0} V_1| \le \delta(\ep), \label{eq:602}
\end{align}
and hence $V_1$ is almost tangent to $\R^2$ in the sense that
\begin{align}
| (\pi_1)_*V_1-V_1| \le \delta(\ep), \label{eq:603}
\end{align}
where $\pi_1 : \R^2 \times S^2 \to \R^2$ is the projection map. Similarly, if $V_2$ is tangent to $S^2$, then we have
\begin{align}
| (\pi_2)_*V_2-V_2| \le \delta(\ep). \label{eq:603a}
\end{align}

Now we define $\varphi_{x,y}:\varphi_y(\Omega_{x,y}) \to \varphi_x(\Omega_{x,y})$ by $\varphi_{x,y}(p)=\pi_1\circ \psi_x \circ \psi_y^{-1}(p, \bar s)$. It is clear from the definition that $\varphi_{x,y}$ is a $\delta(\epsilon)$-almost isomtery on $\varphi_y(\Omega_{x,y})$. We claim that $\varphi_{x,y}$ satisfies all required property. Indeed, for any $z \in \Omega_{x,y}$, we set $\psi_y(z)=(p_1,s_1)$ and $\psi_y(z)=(p_2,s_2)$. We consider a geodesic segment $\gamma$ such that $\gamma(0)=(p_1,s)$ and $\gamma(1)=(p_1,\bar s)$. If we set $\tilde \gamma=\psi_x \circ \psi_y^{-1} \circ \gamma$, then it is clear from \eqref{eq:603} that $|\pi_1 (\tilde \gamma(0))-\pi_1(\tilde \gamma(1))| \le \delta(\ep) r$. Therefore, the property (a) is proved by our definition of $\varphi_{x,y}$. The property (b) can be proved similarly by \eqref{eq:603} and \eqref{eq:603a}.
\end{proof}

\begin{rem}
To make the proof of Proposition \ref{prop:pat0} rigorous, one needs to slightly shrink $\Omega_{x},\Omega_{y}$ and $\Omega_{x,y}$ such that the new sets contain all $S^2$ fibers. In the following, we will not mention this explicitly.
\end{rem}

Next, we will show that the map $\varphi_{x,y}$ which is constructed previously almost satisfies the cocycle condition $\varphi_{x_1,x_3}=\varphi_{x_1,x_2}\circ \varphi_{x_2,x_3}$.
\begin{prop}
For any $x_1,x_2,x_3 \in M$ with $d_g(x_i,x_j) \le 10 r$ with \newline  $r=r(x_1,x_2,x_3)=1/\sqrt{\max(R(x_1),R(x_2),R(x_3))}$ for any pair $i,j \in \{1,2,3\}$, wherever it makes sense, we have the following.
\begin{enumerate}[label=(\alph*)]
    \item $|\varphi_{x_1,x_3}-\varphi_{x_1,x_2}\circ \varphi_{x_2,x_3}| \le  \delta(\epsilon) r.$
    \item $|D\varphi_{x_1,x_3}-D\varphi_{x_1,x_2}\circ D\varphi_{x_2,x_3}| \le \delta(\epsilon)$.
\end{enumerate}
\end{prop}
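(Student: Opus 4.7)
The plan is to reduce the cocycle identity to three applications of property (a) of Proposition~\ref{prop:pat0}, plus the fact that each $\varphi_{x_i,x_j}$ is a $\delta(\epsilon)$-almost isometry (so in particular $\delta(\epsilon)$-Lipschitz), together with the observation that each local projection $\varphi_{x_i}$ is an $\epsilon$-submersion onto its image. After slightly shrinking the domains so that all compositions below make sense, apply Proposition~\ref{prop:pat0}(a) to the three pairs $(x_1,x_2)$, $(x_2,x_3)$ and $(x_1,x_3)$ to obtain
\begin{align*}
|\varphi_{x_1}-\varphi_{x_1,x_2}\circ\varphi_{x_2}| &\le \delta(\epsilon)\, r, \\
|\varphi_{x_2}-\varphi_{x_2,x_3}\circ\varphi_{x_3}| &\le \delta(\epsilon)\, r, \\
|\varphi_{x_1}-\varphi_{x_1,x_3}\circ\varphi_{x_3}| &\le \delta(\epsilon)\, r.
\end{align*}
Since $\varphi_{x_1,x_2}$ is a $\delta(\epsilon)$-almost isometry, composing with it in the second inequality yields
\[
|\varphi_{x_1,x_2}\circ\varphi_{x_2}-\varphi_{x_1,x_2}\circ\varphi_{x_2,x_3}\circ\varphi_{x_3}|\le (1+\delta(\epsilon))\,\delta(\epsilon)\, r,
\]
and combining with the first and third inequalities via the triangle inequality gives
\[
|\varphi_{x_1,x_3}\circ\varphi_{x_3}-\varphi_{x_1,x_2}\circ\varphi_{x_2,x_3}\circ\varphi_{x_3}|\le \delta(\epsilon)\, r.
\]
Because $\varphi_{x_3}$ is an $\epsilon$-submersion onto $B(0,99)$, its image covers $\varphi_{x_3}(\Omega_{x_1,x_2,x_3})$, so every point in the common domain of $\varphi_{x_1,x_3}$ and $\varphi_{x_1,x_2}\circ\varphi_{x_2,x_3}$ equals $\varphi_{x_3}(z)$ for some admissible $z$. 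This proves (a).

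For (b), repeat the same argument after differentiating. Apply Proposition~\ref{prop:pat0}(b) to the three pairs to get the corresponding linearized estimates. Since $D\varphi_{x_1,x_2}$ and $D\varphi_{x_2,x_3}$ are both $\delta(\epsilon)$-close to isometries, and therefore their norms are bounded by $1+\delta(\epsilon)$, composing the inequality for $(x_2,x_3)$ with $D\varphi_{x_1,x_2}$ and combining with the inequalities for the pairs $(x_1,x_2)$ and $(x_1,x_3)$ by the chain rule and the triangle inequality gives
\[
|D\varphi_{x_1,x_3}\circ D\varphi_{x_3}-D\varphi_{x_1,x_2}\circ D\varphi_{x_2,x_3}\circ D\varphi_{x_3}|\le \delta(\epsilon).
\]
Since $D\varphi_{x_3}$ is a surjection with right inverse whose norm is $\le 1+\delta(\epsilon)$ (by the $\epsilon$-submersion property), we may precompose with this right inverse to conclude (b).

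The only subtle point, rather than an actual obstacle, is ensuring the compositions are defined on a nonempty common domain; this is arranged by slightly shrinking the neighborhoods $\Omega_{x_i}$ as in the remark following Proposition~\ref{prop:pat0}, using the uniform bound $d_g(x_i,x_j)\le 10r$ and the $C^2$-closeness of the $\psi_{x_i}^{-1}$ pullbacks to the model $\R^2\times S^2$. Once this is in place, the proof reduces to a bookkeeping exercise in the triangle inequality with Lipschitz constants $1+\delta(\epsilon)$, and all error terms can be absorbed into a single $\delta(\epsilon)$.
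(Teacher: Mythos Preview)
Your proof is correct and follows essentially the same approach as the paper: apply Proposition~\ref{prop:pat0} to the three pairs, use that $\varphi_{x_1,x_2}$ is a $\delta(\epsilon)$-almost isometry to combine them via the triangle inequality, and then use the surjectivity of $\varphi_{x_3}$ to strip it off; part (b) is handled identically at the level of derivatives. The paper is terser (it simply says ``since $\varphi_{x_3}$ is surjective'' and ``(b) can be proved by the same argument''), but your added remarks on the right inverse and the domain-shrinking are consistent with the paper's own comments.
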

\begin{proof} 
From Proposition \ref{prop:pat0}, wherever it makes sense, we have
$$|\varphi_{x_1}-\varphi_{x_1,x_3}\circ \varphi_{x_3}|\le \delta(\epsilon)r~\text{and}~|\varphi_{x_2}-\varphi_{x_2,x_3}\circ \varphi_{x_3}|\le \delta(\epsilon)r.$$
Since $\varphi_{x_1,x_2}$ is $\delta(\epsilon)$-almost isometry, we have
\begin{align*}
|\varphi_{x_1}-\varphi_{x_1,x_2} \circ \varphi_{x_2,x_3} \circ \varphi_{x_3}|&\le |\varphi_{x_1}-\varphi_{x_1,x_2}\circ \varphi_{x_2}|+|\varphi_{x_1,x_2}\circ(\varphi_{x_2}-\varphi_{x_2,x_3}\circ \varphi_{x_3})| \le  \delta(\epsilon) r.
\end{align*}
Using the inequality $|\varphi_{x_1}-\varphi_{x_1,x_3}\circ \varphi_{x_3}|\le \delta(\epsilon) r$, we have 
$$
|(\varphi_{x_1,x_3}-\varphi_{x_1,x_2} \circ \varphi_{x_2,x_3})\circ \varphi_{x_3}| \le \delta(\epsilon) r.
$$
Since $\varphi_{x_3}$ is surjective, (a) is proved. Similarly, (b) can be proved by the same argument.
\end{proof}

Now we want to modify the local fibrations $\varphi_x$ to make them compatible with a transition map $\varphi_{x,y}$. To do so, we need the following lemma. 
\begin{lem} \label{lem:pat}
For any $x,y \in M$ and $r=r(x,y)=1/\sqrt{\max(R(x),R(y))}$ with $2r \le d_g(x,y) \le 4 r$, we assume that $B_g(x,2r)\cap B_g(y,2r) \ne \emptyset$. Then there exista a new fibrations $\tilde{\varphi}_x$ on $B_g(x,2r)$ such that $$\tilde{\varphi}_x=\varphi_{x,y}\circ \varphi_y$$ on $B_g(x,2r)\cap B_g(y,2r)$. Moreover, it has same estimates with those of $\varphi_x$ in Proposition \ref{prop:loc} and coincides with $\varphi_x$ on $B_g(y,4r(x,y))$ wherever $\varphi_x=\varphi_{x,y}\circ \varphi_y$.
\end{lem}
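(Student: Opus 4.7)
The plan is to interpolate between the two fibrations $\varphi_x$ and $\varphi_{x,y}\circ\varphi_y$ on $B_g(x,2r)$ using a smooth cutoff function supported near $y$, taking advantage of the fact that these two maps are $\delta(\ep)$-close in $C^1$ norm (in fact, in $C^{[\ep^{-1}]}$) on their common domain by Proposition \ref{prop:pat0}. The convex combination can be formed in the target $\R^2$ because both maps land in $B(0,99)$, and the target is linear.

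First, I would pick a smooth cutoff $\chi: M\to [0,1]$ with $\chi\equiv 1$ on $B_g(y,2r)$ and $\chi\equiv 0$ outside $B_g(y,3r)$, together with higher-order bounds $r^i|\nabla^i\chi|_{g_x}\le C_i$ for $i\le [\ep^{-1}]$. Such a cutoff is constructed by pulling back a standard radial bump on $\R^2\times S^2$ through $\psi_y$; the estimates on $\chi$ follow from Proposition \ref{prop:loc}(b). Then define
\begin{equation*}
\tilde\varphi_x(z) := (1-\chi(z))\,\varphi_x(z) + \chi(z)\,(\varphi_{x,y}\circ\varphi_y)(z), \qquad z\in B_g(x,2r).
\end{equation*}
By construction $\tilde\varphi_x = \varphi_{x,y}\circ\varphi_y$ on $B_g(x,2r)\cap B_g(y,2r)$ and $\tilde\varphi_x = \varphi_x$ on $B_g(x,2r)\setminus B_g(y,3r)$, which also gives the last stated compatibility with $\varphi_x$.

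To verify that $\tilde\varphi_x$ still satisfies the estimates of Proposition \ref{prop:loc}, I would write $\tilde\varphi_x = \varphi_x + \chi\cdot(\varphi_{x,y}\circ\varphi_y - \varphi_x)$ and differentiate:
\begin{equation*}
D\tilde\varphi_x - D\varphi_x = \chi\bigl[D(\varphi_{x,y}\circ\varphi_y) - D\varphi_x\bigr] + d\chi\otimes(\varphi_{x,y}\circ\varphi_y - \varphi_x).
\end{equation*}
By Proposition \ref{prop:pat0}(b) the first term on the right has $g_x$-norm at most $\delta(\ep)$, and by Proposition \ref{prop:pat0}(a) together with $|d\chi|_{g_x}\le C/r$ the second term is bounded by $(C/r)\cdot \delta(\ep)r = C\delta(\ep)$. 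Hence $|D\tilde\varphi_x - D\varphi_x|_{g_x}\le \delta(\ep)$, so $\tilde\varphi_x$ remains a $\delta(\ep)$-Riemannian submersion onto its image. Higher-order estimates follow from the same Leibniz-type expansion, since each additional derivative either falls on $\chi$ (controlled by our cutoff bounds) or on the difference $\varphi_{x,y}\circ\varphi_y - \varphi_x$ (controlled by the $C^{[\ep^{-1}]}$-closeness derived from Proposition \ref{prop:loc}(b) applied at both $x$ and $y$). Finally, because $\tilde\varphi_x$ is a submersion $C^1$-close to $\varphi_x$, the fibers are small perturbations of $S^2$'s and in particular remain diffeomorphic to $S^2$, giving the fibration structure with the same estimates.

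The main technical obstacle is the last, cross-term $d\chi\otimes(\varphi_{x,y}\circ\varphi_y - \varphi_x)$: a priori $|d\chi|$ is of size $1/r$, which is large, and one only gains smallness because the displacement $\varphi_{x,y}\circ\varphi_y - \varphi_x$ is of order $\delta(\ep)r$. This is exactly the balance that the scale-invariant estimates in Proposition \ref{prop:pat0} provide, and it is what makes the partition-of-unity gluing work. The same balance must be checked at each order of differentiation to recover the full $C^{[\ep^{-1}]}$-closeness to the standard product metric required by Proposition \ref{prop:loc}(b).
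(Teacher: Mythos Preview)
Your proposal is correct and follows essentially the same approach as the paper: define $\tilde\varphi_x$ as a convex combination $\tilde\varphi_x=\chi\,(\varphi_{x,y}\circ\varphi_y)+(1-\chi)\,\varphi_x$ with a cutoff localized near $y$, and derive the estimates from $\tilde\varphi_x-\varphi_x=\chi\,(\varphi_{x,y}\circ\varphi_y-\varphi_x)$ together with Proposition~\ref{prop:pat0}. The only cosmetic difference is that the paper builds the cutoff by pulling back a radial bump on $\R^2$ through the base map $\varphi_y$ (so it is automatically constant on $\varphi_y$-fibers), whereas you pull back through $\psi_y$; both choices give the same derivative bounds and the same conclusion.
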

\begin{proof} 
Let $\theta(z)$ be a cut-off function on $\R^2$ such that $\theta(z)\equiv 1$ on $B(0,2)$ and $\theta(z)\equiv 0$ outside $B(0,4)$ and $\phi(p):=\theta\left(\frac{\varphi_y(p)}{r}\right)$. Now define a map $\tilde{\varphi}_x:\Omega_{x,y} \to \R^2$ by $\tilde{\varphi}_x(z)=\phi(z)(\varphi_{x,y}\circ \varphi_y(z))+(1-\phi(z))\varphi_x(z)$. Clearly, $\tilde{\varphi}_x=\varphi_x$ wherever $\varphi_x=\varphi_{x,y}\circ\varphi_y$. Also since $\phi(z)\equiv1$ for any $z \in B_g(x,2r) \cap B_g(y,2r)$, it satisfies the property. Now the estimates follow from $\tilde{\varphi}_x(z)-\varphi_x(z)=\phi(z)(\varphi_{x,y}\circ \varphi_y(z)-\varphi_x(z))$ and the Proposition \ref{prop:loc}. 
\end{proof}

\begin{lem} \label{lem:pat1}
For any $x_1,x_2,x_3 \in M$ and $r=r(x_1,x_2,x_3)=1/\sqrt{\max(R(x_1),R(x_2),R(x_3))}$ with $2r \le d_g(x_i,x_j) \le 4 r$ for any pair $i,j \in \{1,2,3\}$, we assume that $B_g(x_1,2r)$, $B_g(x_2,2r)$ and $B_g(x_3,2r)$ have a nonempty intersection. Then there exist a new diffeomorphism $\tilde{\varphi}_{x_1,x_3}$ on $\varphi_{x_3}(B_g(x_1,2r)\cap B_g(x_3,2r))$ such that $$\tilde{\varphi}_{x_1,x_3}=\varphi_{x_1,x_2}\circ \varphi_{x_2,x_3}$$ on $\varphi_{x_3}(B_g(x_1,2r)\cap B_g(x_2,2r) \cap B_g(x_3,2r))$. Moreover, it has the same estimates with those of $\varphi_{x_1,x_3}$ in Proposition \ref{prop:pat0} and coincides with $\varphi_{x_1,x_3}$ wherever $\varphi_{x_1,x_3}=\varphi_{x_1,x_2}\circ \varphi_{x_2,x_3}$.
\end{lem}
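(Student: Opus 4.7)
The plan is to mimic the cutoff interpolation used in Lemma \ref{lem:pat}, but one level up on the transition maps rather than on the local fibrations themselves. Concretely, I want to blend $\varphi_{x_1,x_3}$ with the composite $\varphi_{x_1,x_2}\circ\varphi_{x_2,x_3}$ using a cutoff supported where the composite is defined and equal to one on the triple‐overlap image, so that the output agrees with the composite on the triple overlap and inherits the $C^1$-closeness estimates of Proposition \ref{prop:pat0}.

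The construction proceeds as follows. Let $\theta:\R^2\to[0,1]$ be a standard smooth bump with $\theta\equiv 1$ on $B(0,2)$, $\theta\equiv 0$ outside $B(0,4)$, and $|\nabla\theta|\le C$. Writing everything in the rescaled coordinates of $\varphi_{x_3}$ (in which distances in the image are $\delta(\ep)$-close to distances in $g_{x_3}=R(x_3)g$), define
\[
\phi(p)=\theta\!\left(\frac{\varphi_{x_2,x_3}(p)-\varphi_{x_2,x_3}(\varphi_{x_3}(x_2))}{c\,r\sqrt{R(x_3)}}\right)
\]
on $\varphi_{x_3}(\Omega_{x_2,x_3})$ and extended by zero elsewhere, with $c>0$ chosen so that $\phi\equiv 1$ on $\varphi_{x_3}(B_g(x_1,2r)\cap B_g(x_2,2r)\cap B_g(x_3,2r))$ while $\mathrm{supp}\,\phi$ stays in the domain of $\varphi_{x_2,x_3}$. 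Such a choice exists because all three centers and radii are comparable in the rescaled metric (this is where one uses $2r\le d_g(x_i,x_j)\le 4r$ and $r=1/\sqrt{\max R(x_i)}$). Now set
\[
\tilde\varphi_{x_1,x_3}(p)=\phi(p)\,(\varphi_{x_1,x_2}\circ\varphi_{x_2,x_3})(p)+(1-\phi(p))\,\varphi_{x_1,x_3}(p).
\]
On the triple intersection image $\phi\equiv 1$, giving the desired identity; wherever the two maps already agree pointwise, any convex combination returns the common value, which yields the last claim.

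For the estimates, rewrite
\[
\tilde\varphi_{x_1,x_3}-\varphi_{x_1,x_3}=\phi\cdot\bigl((\varphi_{x_1,x_2}\circ\varphi_{x_2,x_3})-\varphi_{x_1,x_3}\bigr),
\]
so the cocycle‐error bound from the preceding proposition gives $|\tilde\varphi_{x_1,x_3}-\varphi_{x_1,x_3}|\le\delta(\ep)\,r$. Differentiating produces two terms: one with $\phi$ multiplying the derivative difference (bounded by $\delta(\ep)$ again by the preceding proposition) and one with $D\phi$ multiplying the value difference; since $|D\phi|\lesssim 1/(r\sqrt{R(x_3)})\sim 1/r$ in the original scale and the value difference is $O(\delta(\ep)r)$, this second term is also $O(\delta(\ep))$. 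Thus $\tilde\varphi_{x_1,x_3}$ is a $\delta(\ep)$-almost isometry with the same first‐order estimates as $\varphi_{x_1,x_3}$, and it is a diffeomorphism onto its image for $\ep$ sufficiently small by the inverse function theorem.

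The main obstacle I anticipate is book‐keeping of scales: checking that the support of $\phi$ really does lie in the domain of $\varphi_{x_2,x_3}$ and that its level set $\{\phi=1\}$ covers the triple overlap image, given only the metric-ball relations $2r\le d_g(x_i,x_j)\le 4r$. This amounts to translating the original-metric distance hypotheses into bounds in the rescaled $\R^2$-coordinates of $\varphi_{x_3}$, which works precisely because Proposition \ref{prop:pat0} guarantees that the transition maps distort distances by at most $\delta(\ep)$ and translate base-points by at most $\delta(\ep)\,r$; all other steps are routine.
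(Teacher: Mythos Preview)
Your argument is correct and follows the same route as the paper: interpolate $\varphi_{x_1,x_3}$ with $\varphi_{x_1,x_2}\circ\varphi_{x_2,x_3}$ via a cutoff $\phi$ centered at (the image of) $x_2$, then read off the $C^0$ and $C^1$ closeness from the almost-cocycle bounds of the preceding proposition. The paper's proof is terser---it simply reuses the cutoff $\theta$ from Lemma~\ref{lem:pat} and writes down the convex combination without spelling out the scale checks or the derivative estimate---so your version is a more detailed execution of the same idea rather than a different approach.
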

\begin{proof} 
Let $\theta(z)$ be the cut-off function defined in Lemma \ref{lem:pat}, then we define $\tilde{\varphi}_{x_1,x_3}(z)=\phi(z)(\varphi_{x_1,x_2}\circ\varphi_{x_2,x_3}(z))+(1-\phi(z))\varphi_{x_1,x_3}(z)$. Then one can check that $\tilde{\varphi}_{x_1,x_3}$ satisfies all properties mentioned in the statement.
\end{proof}

Now we can construct a global fibration on $M$.

\begin{prop} \label{prop:glob}
Let $(M^2,g(t))_{t\in (-\infty,0]}$ be a noncompact complex 2-dimensional nonflat $\kappa$-solution. Then there exists a smooth $S^2$-fibration $p : M \to \mathcal S$ where $\mathcal S$ is a noncompact Riemann surface.
\end{prop}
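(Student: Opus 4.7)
The plan is to assemble the local fibrations of Proposition \ref{prop:loc} into a global fibration by a Čech-type patching argument in the spirit of Cheeger--Fukaya--Gromov, working at a fixed time slice $t = -\tau_0$ with $\tau_0$ large enough that Proposition \ref{prop:loc} applies at every point of $M$. Since the constructed fibration is a topological object on $M$, once established at one time slice it applies to all time slices.

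First I would select a good cover. For each $x \in M$, set $r_x = R(x)^{-1/2}$; by a Vitali argument choose a maximal collection of points $\{x_\alpha\}_{\alpha \in A}$ so that the balls $B_g(x_\alpha, r_{x_\alpha}/5)$ are pairwise disjoint. Maximality forces $\{B_g(x_\alpha, r_{x_\alpha})\}$ to cover $M$, and a standard argument using the comparable curvature scales on overlapping balls (which follows from Proposition \ref{prop:loc}(b)) gives a bounded multiplicity $N = N(\kappa)$: every point of $M$ lies in at most $N$ of the enlarged sets $\Omega_{x_\alpha}$. Whenever $\Omega_{x_\alpha} \cap \Omega_{x_\beta} \ne \emptyset$ we have $d_g(x_\alpha, x_\beta) \le 4\max(r_{x_\alpha}, r_{x_\beta})$ and Proposition \ref{prop:pat0} furnishes an almost-isometry $\varphi_{x_\alpha, x_\beta}$ satisfying the cocycle condition up to error $\delta(\epsilon)$.

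Next comes the inductive patching. Enumerate $A = \{\alpha_1, \alpha_2, \ldots\}$ and set $\tilde{\varphi}_{\alpha_1} = \varphi_{\alpha_1}$. Suppose $\tilde{\varphi}_{\alpha_1}, \ldots, \tilde{\varphi}_{\alpha_k}$ have been constructed together with a coherent family of transition maps $\tilde{\varphi}_{\alpha_i, \alpha_j}$ satisfying the exact cocycle identity on triple overlaps among the first $k$ charts. Apply Lemma \ref{lem:pat} successively (once for each of the at most $N-1$ indices $\alpha_j$, $j \le k$, with $\Omega_{\alpha_{k+1}} \cap \Omega_{\alpha_j} \ne \emptyset$) to modify $\varphi_{\alpha_{k+1}}$ to $\tilde{\varphi}_{\alpha_{k+1}}$ so that $\tilde{\varphi}_{\alpha_{k+1}} = \tilde{\varphi}_{\alpha_{k+1}, \alpha_j} \circ \tilde{\varphi}_{\alpha_j}$ on the relevant overlaps; bounded multiplicity guarantees that only finitely many modifications touch any single $\tilde{\varphi}_{\alpha_k}$, so the procedure terminates locally. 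Apply Lemma \ref{lem:pat1} analogously to upgrade the new transition maps to satisfy the exact cocycle identity. Since $\epsilon$ was chosen sufficiently small (and $\delta(\epsilon) \to 0$), each modification is a small perturbation and the resulting $\tilde{\varphi}_\alpha$ remain $\delta'(\epsilon)$-Riemannian submersions with $S^2$ fibers.

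Finally, declare $x \sim y$ on $M$ if and only if $\tilde{\varphi}_\alpha(x) = \tilde{\varphi}_\alpha(y)$ for some chart containing both points; the cocycle condition makes this a well-defined equivalence relation whose classes are closed smoothly embedded $2$-spheres. Define $\mathcal{S} = M / \sim$ with quotient map $p$; the charts $\tilde{\varphi}_\alpha : \Omega_\alpha \to B(0, 99) \subset \R^2$ descend to a smooth atlas on $\mathcal{S}$ with transitions $\tilde{\varphi}_{\alpha, \beta}$, and Hausdorffness follows from the uniform separation of distinct fibers in each chart. Because every fiber of $p$ is close to a holomorphic $\CP^1$ inside $M$ (the $S^2$ factor of $\C \times \CP^1$ in Proposition \ref{prop:beh}), the horizontal distribution inherits a complex structure from that of $M$ which descends to $\mathcal{S}$, making it a Riemann surface. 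Noncompactness of $\mathcal{S}$ is immediate: $M$ is noncompact while all fibers of $p$ are compact $S^2$'s. The principal technical obstacle is controlling the inductive patching so that the successive modifications of $\tilde{\varphi}_\alpha$ and $\tilde{\varphi}_{\alpha, \beta}$ remain small enough to preserve the submersion property; this is precisely where the bounded multiplicity of the Vitali cover, together with the smallness of $\delta(\epsilon)$ in Proposition \ref{prop:loc} and Proposition \ref{prop:pat0}, is essential.
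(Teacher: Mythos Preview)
Your proposal is correct and follows essentially the same approach as the paper: the paper's own proof is a two-sentence sketch that invokes Proposition~\ref{prop:loc} for local fibrations at a sufficiently negative time, then cites Lemmas~\ref{lem:pat} and~\ref{lem:pat1} together with the standard Cheeger--Fukaya--Gromov patching technique (referring for details to \cite{CG,CFG} and \cite[Theorem~5.16]{CL}). Your write-up is a faithful and more explicit rendition of exactly that argument---the Vitali cover with bounded multiplicity, the inductive modification of $\varphi_\alpha$ and $\varphi_{\alpha,\beta}$, and the quotient construction of $\mathcal{S}$---so no genuine divergence in method. One minor remark: the complex structure on $\mathcal{S}$ that you sketch in the last paragraph is not actually used downstream (Theorem~\ref{thm:kappa3} only needs $\mathcal{S}$ to be a smooth noncompact surface), so that step, while plausible, need not be made rigorous.
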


\begin{proof}
From Proposition \ref{prop:loc}, there exist local fibrations for all points on $M$, if $-t$ is sufficiently large. By using Lemma \ref{lem:pat} and Lemma \ref{lem:pat1}. we can follow the standard technique in \cite{CG,CFG} to modify all local fibrations to be compatible. For details, the reader can refer to \cite[Theorem $5.16$]{CL}.
\end{proof}

Now we can prove the following classification of the noncompact $\kappa$-solutions on \ka surface.
\begin{thm} \label{thm:kappa3}
Let $(M^2,g(t))_{t \in (-\infty,0]}$ be a nonflat, noncompact $\kappa$-solution. Then it is biholomorphic-isometric to a family of shrinking $\C \times \CP^1$.
\end{thm}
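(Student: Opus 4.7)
The plan is to combine the global $S^2$-fibration from Proposition~\ref{prop:glob} with the Toponogov splitting theorem, using that Lemma~\ref{lem:405} upgrades the curvature of $(M,g(t))$ to nonnegative curvature operator throughout $(-\infty,0]$. By Lemma~\ref{lem:quo} it suffices to prove that the universal cover $\tilde M$ is biholomorphic-isometric to $\C\times\CP^1$; note $\tilde M$ inherits from $M$ the structure of a simply connected, complete, noncompact, nonflat \ka ancient solution with nonnegative curvature operator, and is $\kappa'$-noncollapsed for some $\kappa'>0$. Applying Proposition~\ref{prop:glob} to $\tilde M$ yields a smooth $S^2$-fibration $\tilde p:\tilde M\to\tilde{\mathcal S}$; the long exact sequence of homotopy combined with $\pi_1(S^2)=0$ and $\pi_1(\tilde M)=0$ forces $\pi_1(\tilde{\mathcal S})=0$, so $\tilde{\mathcal S}$ is diffeomorphic to $\R^2$, the bundle is trivial, and $\tilde M$ is diffeomorphic to $\C\times\CP^1$.

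To upgrade this diffeomorphism to an isometric identification, I would produce a line in a fixed time slice $(\tilde M,g(-\tau_0))$ by exploiting the asymptotic product structure. By Proposition~\ref{prop:beh}, for any sequence $\tau_i\to\infty$ and any base points $p_i\in\tilde M$ with $Q_i=R(p_i,-\tau_i)$, the rescaled flows $(\tilde M,Q_ig(-\tau_i),p_i)$ subconverge in the Cheeger--Gromov sense to the standard $(\C\times\CP^1,g_{\mathrm{std}})$, whose $\C$-factor contains lines. Pulling these back, one obtains $g(-\tau_i)$-minimizing geodesic segments $\gamma_i$ in $\tilde M$ with endpoints $a_i,b_i$ for which $d_{g(-\tau_i)}(a_i,b_i)\to\infty$; using the monotonicity $\partial_\tau g=2\,\mathrm{Ric}\ge 0$ (which gives $g(-\tau_0)\le g(-\tau_i)$) together with the distance distortion estimate of Theorem~\ref{thm:asm}(i), we have $d_{g(-\tau_0)}(a_i,b_i)\ge d_{g(-\tau_i)}(a_i,b_i)-8(n-1)C_0\sqrt{\tau_i}\to\infty$. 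Letting $\sigma_i$ be a $g(-\tau_0)$-minimizing geodesic from $a_i$ to $b_i$, parametrized by arclength and recentered so that $\sigma_i(0)$ is its midpoint, the uniform curvature bound of Proposition~\ref{prop:bdd} supplies Arzel\`a--Ascoli compactness for extracting a subsequential limit $\sigma_\infty:\R\to(\tilde M,g(-\tau_0))$, which is a line.

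Once a line exists, Toponogov's splitting theorem yields a Riemannian isometric splitting $\tilde M=\R\times N^3$. Since $\tilde M$ is \ka and $\partial_s$ is parallel, so is $J\partial_s$, and the parallel $J$-invariant $2$-plane field $\mathrm{span}(\partial_s,J\partial_s)$ integrates to a totally geodesic holomorphic $\C$-factor, upgrading the splitting to a \ka isometric splitting $\tilde M=\C\times N'$ with $N'$ a Riemann surface. The Ricci flow preserves this splitting, so $(N',g(-\tau))$ is a complete, $\kappa'$-noncollapsed, nonflat $2$-dimensional ancient Ricci flow solution, which by Perelman's classification \cite[Corollary~11.3]{P} must be a family of shrinking round $\CP^1$'s. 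Hence $\tilde M$ is biholomorphic-isometric to a family of shrinking $\C\times\CP^1$, and Lemma~\ref{lem:quo} concludes the same for $M$.

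The main obstacle I anticipate is the line extraction: the $g(-\tau_0)$-minimizing segments $\sigma_i$ inherited from the asymptotic product structure at time $-\tau_i$ have endpoints drifting to infinity, so one must re-center them at a common bounded region to apply Arzel\`a--Ascoli. The time-monotonicity of $g(-\tau)$ and the distance distortion estimate of Theorem~\ref{thm:asm}(i) provide the two-sided control on lengths, and Proposition~\ref{prop:bdd} supplies uniform curvature bounds on compact subsets of the fixed slice, but a careful diagonal argument is needed to produce the actual line.
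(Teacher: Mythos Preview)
Your overall structure diverges from the paper's in an instructive way, but the line-extraction step has a genuine gap.

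\textbf{How the paper argues.} The paper does not attempt to produce a line directly. After passing to the universal cover and invoking Lemma~\ref{lem:405} for nonnegative curvature operator, it appeals to the Ni--Tam structure theorem \cite[Theorem~5.3]{NT}: a simply connected complete K\"ahler manifold with nonnegative curvature operator splits holomorphically and isometrically as $N\times L$, where $N$ is compact Hermitian symmetric and $L$ is diffeomorphic to $\R^{2k}$. If $k=1$ one is done via Perelman's two-dimensional classification; if $k=2$ then $M\cong\R^4$, and Proposition~\ref{prop:glob} gives a global $S^2$-fibration over a simply connected noncompact surface, forcing $M\cong S^2\times\R^2$, a contradiction. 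So the fibration is used only to \emph{rule out} the irreducible case, not to construct the splitting.

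\textbf{The gap in your line extraction.} Fix $p_i=p$ and choose, as you do, endpoints $a_i,b_i$ on the approximate $\C$-factor at time $-\tau_i$. At that time $p,a_i,b_i$ are almost collinear, so the excess $d_{g(-\tau_i)}(p,a_i)+d_{g(-\tau_i)}(p,b_i)-d_{g(-\tau_i)}(a_i,b_i)$ is essentially zero. Transferring to time $-\tau_0$ you have $d_{g(-\tau_0)}(p,a_i)+d_{g(-\tau_0)}(p,b_i)\le d_{g(-\tau_i)}(p,a_i)+d_{g(-\tau_i)}(p,b_i)$ but only $d_{g(-\tau_0)}(a_i,b_i)\ge d_{g(-\tau_i)}(a_i,b_i)-8(n-1)C_0\sqrt{\tau_i}$. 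Hence the $g(-\tau_0)$-excess is bounded above only by a quantity of order $\sqrt{\tau_i}\to\infty$, and the $g(-\tau_0)$-minimizing segments $\sigma_i$ need not pass within any fixed distance of $p$. Since $\tilde M\cong S^2\times\R^2$ has a single end, you also cannot manufacture two opposite rays from $p$ to concatenate into a line. The diagonal/re-centering argument you flag at the end cannot be completed with the estimates you have; this is exactly the difficulty that the paper bypasses by quoting Ni--Tam.

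\textbf{A minor point.} Your appeal to Proposition~\ref{prop:bdd} is misplaced: that proposition concerns real ancient solutions with uniformly PIC and weakly PIC$_2$, and a nontrivial K\"ahler surface has $a_1=a_2=0$, hence is never uniformly PIC in the sense of Definition~\ref{defn:cur}(iii). The bounded curvature you need is already part of Definition~\ref{def:kappa1}, so this is only a wrong citation, not a logical error.
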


\begin{proof}
By taking the universal cover, we may assume that $M$ is simply connected. From Lemma \ref{lem:405}, we know that $(M,g(-\tau))$ has nonnegative curvature operator. From \cite[Theorem $5.3$]{NT}, we conclude that $(M,g(0))$ is biholomorphic-isometric to $N \times L$ where $N$ is a compact Hermitian symmetric space and $L$ is diffeomorphic to $\R^{2k}$ where $k=\dim_{\C} L$. If $k=1$, then $(M,g(t))$ is biholomorphic-isometric to the product of a real 2-dimensional $\kappa$-solutions $L$ and $\CP^1$. From the result in \cite{P}, it implies that $M$ is biholomorphic-isometric to $\C \times \CP^1$ that verifies the statement. 

If $k=2$, then $M=L$ and in particular $M$ is diffeomorphic to $\R^4$. From Proposition \ref{prop:glob}, there exists a global $S^2$-fibration over a noncompact Riemann surface $\mathcal S$. Applying the long exact sequence of homotopy groups on the fibration $S^2 \to \R^4 \to \mathcal S$, we have
$$
\pi_1(S^2)\to (\pi_1(\R^4)=0) \to \pi_1(\mathcal S) \to (\pi_0(S^2)=0),
$$
which shows $\pi_1(\mathcal S)=0$, i.e., $\mathcal S$ is simply connected. By the classification of noncompact surfaces, $\mathcal S$ is diffeomorphic to $\R^2$. In particular, the base space of this fibration is contractible. Therefore, this fibration is trivial so the total space has to be diffeomorphic to $S^2 \times \R^2$, which is a contradiction. From Lemma \ref{lem:quo}, the proof is complete.
\end{proof}

Now the proof of Theorem \ref{thm:main2} is immediate.

\emph{Proof of Theorem \ref{thm:main2}}: Theorem \ref{thm:main2} follows from Theorem \ref{thm:cpt} and Theorem \ref{thm:kappa3}. 

\begin{rem}
With more effort, we can classify all $\kappa$-solutions to the \ka Ricci flow for any dimensions. Moreover, we can show any complete, $\kappa$-noncollapsed, ancient solutions to the \ka Ricci flow with nonnegative bisectional curvature must be a $\kappa$-solution. Those results will appear in a separate paper.
\end{rem}

\section{Further discussion}
In this section, we propose some conjectures.

Based on the result of 4-dimensional Ricci shrinkers with weakly PIC \cite{LNW18}, we propose the following conjecture, see also \cite{BE10}.

\begin{conj}\label{conj:1}
Let $(M^n,g,f)$ be a Ricci shrinker with weakly \emph{PIC}. Then $(M,g)$ is isometric to a quotient of $N^k \times \R^{n-k}$, where $N$ is a compact symmetric space.
\end{conj}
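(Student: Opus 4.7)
The plan is to combine the curvature improvement machinery of Sections 3--4 with a strong maximum principle argument, then invoke classification results for the compact factor. Since a Ricci shrinker induces a self-similar ancient solution to the Ricci flow on which weakly \emph{PIC} is preserved, the first step is to show that weakly \emph{PIC} actually upgrades to nonnegative curvature operator on the shrinker. For this I would apply Corollary \ref{C201} together with the framework of Theorem \ref{thm:cones}: construct (or borrow from Brendle's pinching families) a continuous one-parameter family of closed, convex, $O(n)$-invariant cones $C(s)$ in $\mathscr{C}_B(\R^n)$ that interpolates between weakly \emph{PIC} at $s=0$ and nonnegative curvature operator at $s=1$, and verify that along the path each $C(s)$ is transversally invariant under Hamilton's ODE $\dot{\Rm}=Q(\Rm)$. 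The auxiliary function $\lambda(x,t)$ measuring how far $\Rm$ is from the boundary of $C(s)$ then satisfies an inequality of the form $\tl \lambda + \lambda^2 \le 0$ in the barrier sense, exactly as in the proof of Theorem \ref{thm:pic2}, so a continuity/openness--closedness argument on $\{s : \Rm \in C(s)\}$ pushes the curvature into $C(1)$.

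Once the curvature operator is nonnegative, I would apply Hamilton's strong maximum principle: the null space of $\Rm$ is a smooth parallel distribution on $M$, and on the universal cover $\widetilde{M}$ one obtains a metric splitting $\widetilde{M} = N \times \R^{n-k}$, with $N$ carrying a strictly positive curvature operator (or at least strictly \emph{PIC}$_2$) off the null factor. The shrinker equation $R_{ij}+\na_i\na_j f=\tfrac{1}{2}g_{ij}$ together with the splitting forces the potential $f$ to decompose as $\tfrac{1}{4}|y|^2+f_N(x)$, so $N$ inherits a Ricci shrinker structure; on the flat factor, $\na f$ generates translations, accounting for the quotient. Thus the conjecture reduces to showing that a compact Ricci shrinker with weakly \emph{PIC} must be a symmetric space.

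For the last step I would invoke the known classifications: in dimension $4$ this is Li--Ni--Wang~\cite{LNW18}; in high dimensions ($n\ge 12$) a shrinker with uniformly \emph{PIC} (which now follows from compactness together with the strict curvature positivity produced in Step 2) falls under Brendle's sphere-type classification. If $N$ is irreducible K\"ahler one combines with Theorem \ref{thm:shrinker}; in general, De Rham decomposition plus Berger's holonomy classification separates the symmetric and non-symmetric irreducible factors, and one argues that non-symmetric factors cannot support a weakly \emph{PIC} shrinker of positive Einstein constant.

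The main obstacle is Step 1. The arguments of Sections 3--4 rely critically on the uniform bound $|\Rm|\le CR$ coming from uniformly \emph{PIC}, which completely fails for an arbitrary weakly \emph{PIC} shrinker. To apply Corollary \ref{C201} one must first produce some scalar-valued pinching quantity satisfying a clean $\tl f\ge \delta f^2$ inequality without such a bound; this likely requires exploiting shrinker-specific identities (e.g.\ $\Delta f=\tfrac{n}{2}-R$, $R+|\na f|^2-f=\mathrm{const}$, and the Cao--Zhou estimate $f(x)\sim\tfrac{1}{4}d(x,p)^2$) to localize the maximum principle on balls of polynomially growing radius. A secondary difficulty is the compact-factor classification in intermediate dimensions $5\le n\le 11$, where no analogue of Brendle's theorem is presently available under weakly \emph{PIC} alone; overcoming this may require an independent rigidity argument for compact shrinkers with nonnegative curvature operator.
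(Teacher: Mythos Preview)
The statement you are attempting to prove is not a theorem of the paper: it is Conjecture~\ref{conj:1}, explicitly listed in Section~7 as an open problem. The paper offers no proof, only the remark that Conjecture~\ref{conj:2} (weakly PIC ancient solutions have weakly PIC$_2$) would imply it. So there is nothing to compare your argument against; what you have written is a plausible strategy for attacking an open question, and you yourself correctly flag the essential gaps.

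Those gaps are genuine and, as far as the paper's techniques go, unbridgeable. Your Step~1 asks for a continuous family of cones $C(s)$ satisfying the $(*)$-conditions of Theorem~\ref{thm:cones} and interpolating from the weakly PIC cone to the nonnegative curvature operator cone. No such family is constructed in the paper or in the references: Brendle's cones $\hat{\mathcal C}(b)$ start from a cone requiring \emph{uniformly} PIC (indeed the paper needs Proposition~\ref{prop:2ric} and the bound $|\Rm|\le CR$ in \eqref{eq:311} even to enter $\hat{\mathcal C}(b_0)$), and they terminate at weakly PIC$_1$, not at nonnegative curvature operator. Under merely weakly PIC the scalar curvature does not control the full curvature tensor, so the inequality $\tl\lambda+\lambda^2\le0$ in \eqref{E311aa} cannot be derived by the paper's method. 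Your suggestion to compensate using shrinker identities is reasonable heuristically, but nothing in Sections~3--4 indicates how to make that work.

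Your later steps also lean on results stronger than what is available. Even granting weakly PIC$_2$, the splitting you want comes from the strong maximum principle for the PIC$_2$ cone (as in \cite[Proposition~6.6]{B1}), which yields a single $\R$-factor and an $(n-1)$-dimensional factor with weakly PIC$_1$, not directly a full flat factor and a compact symmetric space; and the classification of compact shrinkers with weakly PIC is open precisely in the dimensions $5\le n\le 11$ you mention. In short, your outline is a sensible research program rather than a proof, and the paper treats the statement accordingly.
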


In general, we have the following conjecture, which directly implies Conjecture \ref{conj:1}.

\begin{conj}\label{conj:2}
Let  $(M^n,g(t))_{t \in (-\infty,0]}$ be a complete ancient solution to the Ricci flow with weakly \emph{PIC}. Then it has weakly \emph{PIC}$_2$.
\end{conj}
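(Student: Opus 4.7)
The plan is to adapt the cone-pinching strategy of Theorem \ref{thm:pic2} and the direct curvature estimates of Section 4 to the weakly PIC setting, using Corollary \ref{C201} as the key analytic tool to circumvent the absence of a global curvature bound. The essential new difficulty, compared to the uniformly PIC setting, is that the bound $|\Rm|\le CR$ is no longer available, so terms appearing in evolution inequalities cannot be absorbed into a multiple of $R^2$ by a small constant. A preliminary reduction should be made first: by Chen's result \cite{C}, $R\ge 0$, and if $R\equiv 0$ somewhere then the strong maximum principle forces the solution to be Ricci flat, in which case weakly PIC$_2$ is trivial; otherwise one can assume $R>0$ strictly on $M\times(-\infty,0]$. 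One should also establish the uniform 2-nonnegativity of Ricci (the analogue of Proposition \ref{prop:2ric} in the non-uniform case) to obtain a replacement lower curvature bound of the form $\lambda_1+\lambda_2\ge 0$.

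I would begin with the case $n=4$, where the analysis is most concrete. By Lemma \ref{lem:400}, one already has $a_1,c_1\ge 0$, so it suffices to prove $a_1 c_1-b_3^2\ge 0$, which gives a nonnegative curvature operator and in particular weakly PIC$_2$. The plan is to study $u := a_1 c_1-b_3^2$ (or a suitable modification such as $u+\delta R^2$) and derive an evolution inequality of the form $\tl u^-\ge \delta(u^-)^2$ in the barrier sense. Using Hamilton's ODEs for $a_i,b_i,c_i$ one computes $\tl(a_1 c_1-b_3^2)$ and attempts to show the reaction term controls $(u^-)^2$. The obstacle here is that several error terms in Lemma \ref{lem:402} were handled using the inequality $|\textrm{Ric}|^2\le nR^2$ together with uniformly PIC; without uniform PIC, one must instead exploit that at points where $u<0$ one has $b_3^2>a_1 c_1$, which forces $b_3$ to be comparable to the largest Ricci eigenvalue, and then play this off against the good quadratic terms in Hamilton's evolution. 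Once the 4-dimensional case is settled, Corollary \ref{C201} then yields $u\ge 0$ everywhere.

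For $n\ge 5$, the plan is to construct a continuous family $C(s)$, $s\in[0,1]$, of closed, convex, $O(n)$-invariant cones in $\mathscr{C}_B(\R^n)$ satisfying condition $(*)$ of Theorem \ref{thm:cones}, with $C(0)$ the intersection of weakly PIC with $\{R>0\}$ and $C(1)$ the weakly PIC$_2$ cone, each preserved under Hamilton's ODE. With such a family in hand, Theorem \ref{thm:cones} finishes the argument. The main obstacle, and where I expect the real work to lie, is building this family: Brendle's cones in Section \ref{sec:high} crucially depend on the Ricci$\owedge$ id component of $l_{a,b}(S)$ being controllable via $|\Rm|\le CR$, and the base cone is $\mathcal{C}(b_0)$ for a small $b_0$ chosen according to the uniform PIC constant $\theta$. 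Replacing that input by $R>0$ plus 2-nonnegative Ricci requires either a genuinely new family of cones (perhaps depending on Ricci eigenvalues rather than the scalar curvature), or a compactness-cum-blowup argument: at any point where the pinching degenerates, rescale to obtain a limit ancient solution that is either uniformly PIC (to which Theorem \ref{thm:pic2} applies) or splits a factor by \cite[Proposition 6.6]{B1}, and then induct on dimension. A supporting ingredient would be a weak canonical neighborhood theorem analogous to Theorem \ref{thm:cano}, which in the non-$\kappa$-noncollapsed setting could be substituted by Hamilton's differential Harnack inequality once weakly PIC$_2$ is established on a dense open subset.
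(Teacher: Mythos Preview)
The statement you are attempting to prove is Conjecture~\ref{conj:2}; the paper does \emph{not} supply a proof of it. It appears in the final section as an open problem, so there is no proof in the paper to compare your proposal against. What you have written is a research outline, not a proof, and you yourself flag the essential steps as unresolved (``the main obstacle, and where I expect the real work to lie'').

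Concretely, the gaps are genuine and not merely cosmetic. In the $n=4$ case, the evolution inequality you hope to derive for $u=a_1c_1-b_3^2$ cannot be closed with the tools of Section~4: the passage from Lemma~\ref{lem:402} to Lemma~\ref{lem:403} uses Lemma~\ref{lem:401} ($a_3\le Ca_1$, $c_3\le Cc_1$), whose proof requires the uniform bound $\max(a_3,b_3)\le\Lambda(a_1+a_2)$ in an essential way. Under weakly PIC alone, $a_1+a_2$ may vanish or be arbitrarily small relative to $a_3$, so no inequality of the form $\tl u^-\ge\delta(u^-)^2$ follows from the reaction terms; your suggestion that ``$b_3$ is comparable to the largest Ricci eigenvalue'' at bad points does not supply the missing coercivity. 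For $n\ge 5$, the construction of a continuous family of invariant cones interpolating from weakly PIC to weakly PIC$_2$ is precisely the content of the conjecture; the cones $\hat{\mathcal C}(b)$ used in Theorem~\ref{thm:pic2} are entered via the uniform bound $|\Rm|\le CR$ and Proposition~\ref{prop:2ric}, both of which fail without uniform PIC. Finally, your fallback blow-up/compactness argument is circular: without $\kappa$-noncollapsing there is no compactness, and the Harnack inequality you invoke to substitute for it requires weakly PIC$_2$, which is what you are trying to prove.
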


\vskip10pt

Jae Ho Cho, Department of Mathematics, Stony Brook University, Stony Brook, NY 11794, USA; jaeho.cho@stonybrook.edu.
\vskip10pt
Yu Li, Department of Mathematics, Stony Brook University, Stony Brook, NY 11794, USA; yu.li.4@stonybrook.edu.\\

\end{document}